\documentclass{article}[12 pt]
\usepackage{geometry}
\usepackage[all]{xy}
\usepackage{mathrsfs}
\usepackage{amsthm,amsmath,amssymb,enumitem}
\usepackage{setspace}
\usepackage[all]{xy}
\usepackage{amsmath,color}
\usepackage{mathrsfs}
\usepackage{amsmath}
\usepackage{geometry}
\usepackage{authblk}

\newdir{ >}{{}*!/-5pt/@{>}}
\geometry{a4paper,scale=0.75}
\theoremstyle{plain}
\newtheorem{thm}{Theorem}[section]
\theoremstyle{definition}
\newtheorem{defn}{Definition}[section]
\newtheorem{prop}{Proposition}[section]
\newtheorem{cor}{Corollary}[section]
\newtheorem{lemma}{Lemma}[section]
\newtheorem{eg}{Example}[section]
\newtheorem{rmk}{Remark}[section]

\DeclareMathOperator{\Ass}{Ass}
\DeclareMathOperator{\Spec}{Spec}

\DeclareMathOperator{\Supp}{Supp}
\DeclareMathOperator{\ann}{ann}
\DeclareMathOperator{\Ord}{Ord}
\DeclareMathOperator{\rk}{rank}

\title{}
\begin{document}
\title{On Coprimary Filtrations}
\author{Yao Li}
\maketitle
\section{Introduction}
The coprimary filtration is a basic construction in commutative algebra (see \cite[Theorem 6.4]{M1989}).  Here we review the outcome: Let $A$ be a Noetherian ring, $M$ be a non-zero finitely generated $A$-module. Then there exists a chain of submodules of $M$,
\[0=M_0\subset M_1\subset \cdots \subset M_n=M,\]
such that $M_i/M_{i-1}\cong A/P_i$, where $P_i\in \Spec(A)$, $i\in\{1,\cdots,n\}$. In this result, $P_i$ may not belong to $\Ass(M)$. This result could be extended in algebraic geometry in the setting of Noetherian scheme (see \cite[Corollary 3.2.8]{G1965}). Let $X$ be a Noetherian scheme, $\mathcal F$ be a non-zero coherent sheaf on $X$. Then there exists a filtration of coherent sheaves on $X$,
\[0=\mathcal F_0\subset \mathcal F_1\subset \cdots \subset \mathcal F_n=\mathcal F,\]
such that $\mathcal F_i/\mathcal F_{i-1}$ is coprimary and $\Ass(\mathcal F_i/\mathcal F_{i-1})\subseteq \Ass(\mathcal F)$, where $i\in\{1,\cdots, n\}$. Recently, a new result has been discovered by Chen and Jeannin \cite{CJ2023} by using the idea from Harder-Narasimhan theory. \\

The notion of Harder-Narasimhan filtration was first introduced by Harder and Narasimhan \cite{HN1974} in the setting of vector bundles on a non-singular projective curve. Let $k$ be an algebraically closed field and $C$ be a non-singular projective curve over $k$. Then any non-zero vector bundle $E$ on $C$ admits a canonical filtration by vector subbundles,
\[0=E_0\subset E_1\subset\ldots\subset E_n=E,\]
where the subquotients $E_i/E_{i-1}$ are semi-stable vector bundles with strictly decreasing slopes. Curiously, analogous constructions have also been  found in many other branches of mathematics which motivate various categorical constructions of Harder-Narasimhan filtration, see for example \cite[\S II.2]{L1997}, \cite{R1997}, \cite{A2009}, \cite{C2010}, \cite{L2024}.
In Li's work \cite{L2024}, he developed the Harder-Narasimhan theory via slope functions valued in a totally ordered set, rather than employing degree functions and rank functions. Later, the chain
conditions of \cite{L2024} are relaxed by Chen and Jeannin \cite{CJ2023}. By directly applying the Harder-Narasimhan theory, they obtained the following result: Let $A$ be a Noetherian ring, $M$ be a non-zero finite $A$-module and $(\Ass(M),<)$ be a totally ordered set which is a linear extension of the partially ordered set $(\Ass(M),\subset)$. Denote $\Ass(M)$ by $\{\mathfrak{p}_1,\cdots, \mathfrak{p}_n\}$, where $\mathfrak{p}_1>\mathfrak{p}_2>\cdots>\mathfrak{p}_n$. Then there exists a unique chain of submodules of $M$,
\[0=M_0\subset M_1\subset \cdots \subset M_n=M,\]
such that $\Ass(M_i/M_{i-1})=\{\mathfrak{p}_i\}$, where $i\in \{1,\cdots, n\}$. In this result, they construct the destabilizing object $M_1$ by using Noetherian condition and $\mu_A$-descending chain condition. In this article, we prove the existence and uniqueness of coprimary filtration of modules (not necessarily finitely generated) over a Noetherian ring.

\begin{thm}
Let $A$ be a Noetherian ring and $M$ be a non-zero $A$-module. Let $(\Ass(\widetilde{M}),<)$ be a well-ordering extension of $\Ass(\widetilde{M})$ in Example \ref{Eg: key}. Then there exists a unique filtration of $A$-modules, $(M^t)_{t\in \Ass(\widetilde{M})}$, satisfying the following properties,
\begin{enumerate}[label=\rm(\alph*)]
\item $ M^r\supset  M^s$ if $r<s$,
\item $M^0=M$, where $0$ is the minimal element of $\Ass(\widetilde M)$,
\item $\Ass(\widetilde{M^t/M^{t+1}})=\{t\}$,
\item If $t$ is a limit point of $(\Ass(\widetilde M),<)$, then
 \[ M^t=\bigcap_{r<t} \mathcal M^r.\]
\item \label{Item:Important}$\Ass(\widetilde{M^t})=\{r: r\geqslant t\}$.
\end{enumerate}

\end{thm}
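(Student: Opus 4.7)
The plan is to construct $(M^t)$ by transfinite recursion along the well-ordering of $\Ass(\widetilde{M})$, adapting the Harder--Narasimhan style construction of Chen--Jeannin to the transfinite and not-necessarily-finitely-generated setting. At the base, setting $M^0 = M$ realizes condition (b). For a successor index $t \mapsto t^+$, given $M^t$, I would define $M^{t^+}$ to be the largest submodule $N \subseteq M^t$ with $t \notin \Ass(\widetilde{N})$ (equivalently, the sum of all such submodules, which enjoys the same property provided the formation of $\Ass(\widetilde{\,\cdot\,})$ behaves well under sums, a point to be extracted from Example \ref{Eg: key}). This maximality then forces $\Ass(\widetilde{M^t/M^{t^+}}) = \{t\}$, giving (c). At a limit index $t$, the only choice compatible with (d) is $M^t := \bigcap_{r<t} M^r$, which I take as the definition.

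The main obstacle, and the real content of the theorem, is to verify property \ref{Item:Important} throughout the transfinite induction. The inclusion $\Ass(\widetilde{M^t}) \subseteq \{r : r \geqslant t\}$ is the easier direction: at successor steps it is forced by the very definition of $M^{t^+}$ together with the inductive hypothesis applied to $M^t$, and at limit steps it reduces to a continuity statement of the form $\Ass(\widetilde{\bigcap_\alpha N_\alpha}) \subseteq \bigcap_\alpha \Ass(\widetilde{N_\alpha})$, which one expects to be either an axiom of, or a short consequence of, the construction $\widetilde{\,\cdot\,}$ introduced in Example \ref{Eg: key}.

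The reverse inclusion $\{r : r \geqslant t\} \subseteq \Ass(\widetilde{M^t})$ is more delicate, precisely at limit ordinals: one must show that a prime $r \geqslant t$ survives the potentially very large intersection $\bigcap_{s < t} M^s$. The natural strategy is to fix, for each such $r$, a distinguished test element or embedding $A/r \hookrightarrow \widetilde{M}$, and to argue by transfinite induction that its image in fact lies in $\widetilde{M^s}$ for every $s < t$; this uses that $r \geqslant t > s$ is compatible with the inductive hypothesis at each earlier stage, and that at each successor step the inductive extraction removes only the prime $s$, not $r$. Pinning down this persistence under intersection is where the proof will require the most care.

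Finally, uniqueness follows by a parallel transfinite induction: for any rival filtration $(N^t)$ satisfying (a)--(e), condition (c) together with (e) characterizes $N^{t^+}$ inside $N^t$ as exactly the maximal submodule whose $\widetilde{\,\cdot\,}$-associated primes avoid $t$, matching the construction of $M^{t^+}$, while (d) forces agreement at limit ordinals. Comparing the two filtrations index-by-index along the well-ordering concludes the proof.
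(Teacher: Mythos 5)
Your overall architecture does match the paper's proof: transfinite recursion with $M^0=M$, intersections at limit points, a test-element persistence argument for the inclusion $\{r: r\geqslant t\}\subseteq \Ass(\widetilde{M^t})$ at limit ordinals, and uniqueness by taking the least index of disagreement (which cannot be $0$ or a limit point). Moreover, your successor-step object, once it is shown to exist, coincides with the paper's explicit choice $M^{t+1}=\ker\bigl(M^t\to (M^t)_t\bigr)$.

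The genuine gap sits exactly at the point you defer: the existence of a \emph{largest} submodule $N\subseteq M^t$ with $t\notin\Ass(\widetilde N)$. This is not a formal property of $\Ass$ under sums, and it is not something to be "extracted from Example \ref{Eg: key}" (that example only establishes well-foundedness of the specialization order on $\Ass$; $\widetilde{(\,\cdot\,)}$ is just the associated sheaf on $\Spec A$). As a general principle the claim is false: for $A=\mathbb Z$, $M=\mathbb Z\oplus\mathbb Z/p\mathbb Z$ and $t=(p)$, the submodules $\mathbb Z\cdot(1,\bar 0)$ and $\mathbb Z\cdot(1,\bar 1)$ both have $\Ass=\{(0)\}$, yet their sum contains $(0,\bar 1)$ and hence has $(p)$ as an associated prime. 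What saves the construction is the inductive hypothesis (e): $\Ass(\widetilde{M^t})=\{r:r\geqslant t\}$ forces $t$ to be minimal with respect to inclusion among the associated primes of $M^t$ (the well-order extends the specialization order), and one then argues via supports and localization---if $N\subseteq M^t$ and $N_t\neq 0$, a minimal prime of $\Supp(\widetilde N)$ contained in $t$ lies in $\Ass(\widetilde N)$ by Proposition \ref{prop:min} and must equal $t$---that the family $\{N\subseteq M^t: t\notin\Ass(\widetilde N)\}$ is exactly $\{N: N_t=0\}$, whose largest member is $\ker(M^t\to(M^t)_t)$. This is precisely the content of the paper's Lemma \ref{Lemma:key}, and it is also what your uniqueness step silently uses when you claim that (c) and (e) characterize $N^{t+1}$ as that largest submodule. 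Until this lemma (or an equivalent) is supplied, the successor step, property (c), and the uniqueness argument are all unsupported. By contrast, the limit-step worry you flag as the hardest point is smaller than you think: the paper's argument is exactly your test-element idea (fix $x$ with $\ann(x)=s$, $s\geqslant t$, and show $Ax\subseteq M^q$ for every $q<t$ because $s\not\subseteq q$ gives some $a\in s\setminus q$ killing $x$, so $x$ dies in $(M^q)_q$), and with the "largest submodule" description the persistence is immediate from $q\notin\Ass(\widetilde{Ax})=\{s\}$.
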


We also consider coprimary filtrations of coherent sheaves on Noetherian schemes. We provide a new proof by algebraic geometry method without using Harder-Narasimhan theory. This method allows to construct the flags in a explicit way. Here is a version of our result.
\begin{thm}\label{thm: nt filtration}
Let $X$ be a Noetherian scheme, $\mathcal F$ be a non-zero coherent sheaf on $X$ and $(\Ass(\mathcal F),<)$ be a totally ordered set which is a linear extension of the partially ordered set defined in Example \ref{Eg: key}. Denote $\Ass(\mathcal F)$ by $\{\mathfrak{p}_1,\cdots, \mathfrak{p}_n\}$, where $\mathfrak{p}_1>\mathfrak{p}_2>\cdots>\mathfrak{p}_n$. Then there exists a unique chain of coherent subsheaves of $\mathcal F$,
\[0=\mathcal F_0\subset \mathcal F_1\subset \cdots \subset \mathcal F_n=\mathcal F,\]
such that $\Ass(\mathcal F_i/\mathcal F_{i-1})=\{\mathfrak{p}_i\}$, where $i\in \{1,\cdots, n\}$. Moreover,
$\mathcal F_{n-1}=\ker(\mathcal F\rightarrow i_*i^*\mathcal F)$, where $i:\Spec (\mathcal O_{X,\mathfrak{p}_n})\rightarrow X$ is a canonical morphism.
\end{thm}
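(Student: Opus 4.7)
The plan is to construct the filtration by downward induction on $n$, at each stage peeling off the top coprimary quotient using the explicit formula $\mathcal F_{n-1} = \ker(\mathcal F\to i_*i^*\mathcal F)$. The key structural observation is that the minimum element $\mathfrak p_n$ of the linear extension $(\Ass(\mathcal F),<)$ is necessarily a minimal element of $\Ass(\mathcal F)$ with respect to inclusion: if some $\mathfrak p_i\subsetneq \mathfrak p_n$ held for $i<n$, this relation would also hold in the partial order of Example \ref{Eg: key}, forcing $\mathfrak p_i<\mathfrak p_n$ in any linear extension and contradicting minimality of $\mathfrak p_n$.

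First I would verify the construction affine-locally. On an open $\Spec A\subseteq X$ with $\mathcal F|_{\Spec A}=\widetilde M$ and writing $\mathfrak p=\mathfrak p_n$, the morphism $i$ is affine, so $i_*i^*\mathcal F$ corresponds to $M_\mathfrak p$ regarded as an $A$-module, and $\mathcal F_{n-1}$ corresponds to $K=\ker(M\to M_\mathfrak p)=\{m\in M:sm=0\text{ for some }s\in A\setminus \mathfrak p\}$. To establish $\Ass(\mathcal F_{n-1})=\Ass(\mathcal F)\setminus\{\mathfrak p_n\}$: the exclusion of $\mathfrak p_n$ follows because $K_\mathfrak p=0$ by exactness of localization; for the reverse inclusion, given $\mathfrak q\in\Ass(\mathcal F)\setminus\{\mathfrak p_n\}$, minimality of $\mathfrak p_n$ forces $\mathfrak q\not\subseteq \mathfrak p_n$, so any $m$ with $\ann(m)=\mathfrak q$ is killed by some $s\in \mathfrak q\setminus \mathfrak p_n$ and hence lies in $K$. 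The quotient $\mathcal F/\mathcal F_{n-1}$ embeds into $i_*i^*\mathcal F$, and the $A$-module associated primes of $M_\mathfrak p$ equal $\{\mathfrak q\in\Ass(\mathcal F):\mathfrak q\subseteq \mathfrak p_n\}=\{\mathfrak p_n\}$ by minimality; nonvanishing of the quotient follows from $\mathcal F_{\mathfrak p_n}\neq 0$. The induction then applies to $\mathcal F_{n-1}$, since the restricted order on $\{\mathfrak p_1,\dots,\mathfrak p_{n-1}\}$ again has its minimum minimal under inclusion.

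For uniqueness, suppose $\mathcal F_{n-1}'$ is another coherent subsheaf fitting into a filtration with the prescribed associated primes. I would prove the two inclusions separately. For $\mathcal F_{n-1}'\subseteq \mathcal F_{n-1}$: every associated prime of $\mathcal F_{n-1}'$ lies in $\{\mathfrak p_1,\dots,\mathfrak p_{n-1}\}$, and by minimality of $\mathfrak p_n$ none of these is contained in $\mathfrak p_n$, so $(\mathcal F_{n-1}')_{\mathfrak p_n}=0$ and the composition $\mathcal F_{n-1}'\to \mathcal F\to i_*i^*\mathcal F$ vanishes. For $\mathcal F_{n-1}\subseteq \mathcal F_{n-1}'$: the quotient $\mathcal F_{n-1}/\mathcal F_{n-1}'$ embeds into $\mathcal F/\mathcal F_{n-1}'$, which is coprimary at $\mathfrak p_n$, so its $\Ass$ lies in $\{\mathfrak p_n\}$; but $\mathcal F_{n-1}$ already has zero stalk at $\mathfrak p_n$, so the quotient does too, forcing $\Ass=\emptyset$ and hence the quotient is zero. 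Uniqueness of the remaining pieces follows by the inductive hypothesis applied to $\mathcal F_{n-1}$.

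The main technical obstacle I anticipate is the exact equality $\Ass(\mathcal F_{n-1})=\Ass(\mathcal F)\setminus\{\mathfrak p_n\}$ rather than the trivial one-sided containment; the reverse inclusion depends essentially on the minimality of $\mathfrak p_n$ under subset ordering, which is precisely what the extension of the Example \ref{Eg: key} partial order guarantees and is what drives the induction. Secondary points to verify carefully are the coherence of the kernel $\mathcal F_{n-1}$ (which uses the Noetherian hypothesis on $X$) and the identification of $i_*i^*\mathcal F$ on an affine open containing $\mathfrak p_n$ with the sheaf $\widetilde{M_\mathfrak p}$, justified by the affineness of $i$.
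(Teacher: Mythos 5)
Your proposal is correct and follows essentially the same route as the paper: the paper deduces Theorem \ref{thm: nt filtration} from Theorem \ref{Thm: coprimary filtration}, whose finite (Noetherian) case is exactly your argument --- peel off the minimum $\mathfrak{p}_n$ of the linear extension, which by the order of Example \ref{Eg: key} is inclusion-minimal in $\Ass(\mathcal F)$, via $\mathcal F_{n-1}=\ker(\mathcal F\rightarrow i_*i^*\mathcal F)$, compute the associated points of kernel and quotient from this minimality (the content of Lemmas \ref{lemma:calculation} and \ref{Lemma:key}), and induct, with your two-inclusion uniqueness argument being precisely the ``if'' direction of Lemma \ref{Lemma:key}. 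The one detail your affine-local verification leaves implicit is that $\Ass(i_*i^*\mathcal F)=\{\mathfrak{p}_n\}$ is needed on all of $X$, not only on the chart containing $\mathfrak{p}_n$; the paper supplies this globalization through $\Ass(j_*\mathcal G)=\Ass(\mathcal G)$ for open immersions (Proposition \ref{prop: open embedding}, quoted from EGA).
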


The coprimary filtration is connected to the secondary representation. Kirby \cite[Theorem 3]{K1973} proved the following result: Let $A$ be a Noetherian ring, and $M$ be an $A$-module of finite length with a coprimary decomposition $M=N_1+\cdots+N_n$, where $N_i$ is $P_i$-coprimary ($i\in \{1,\cdots,n\}$). If prime ideals $P_i$ are distinct maximal ideals of $A$, then $M=N_1\oplus N_2\oplus \cdots\oplus N_n$ and this decomposition is the unique normal coprimary decomposition of $M$. It is important to mention that the definition of coprimary modules in Kirby's article differs from those usually found in many textbooks (see \cite[p.94]{E1995}). In fact, the definition of coprimary modules in this article is the same as that in \cite{E1995}. However, under the condition of Kirby's theorem mentioned above, the two definitions are equivalent (see \cite[p.44]{M1989}). Using Theorem \ref{thm: nt filtration}, we obtain the following theorem (see Theorem \ref{thm:direct sum} for details).
\begin{thm}
Let $X$ be a  Noetherian scheme and $\mathcal F$ be a non-zero coherent sheaf on $X$.
 Denote $\Ass(\mathcal F)$ by $\{\mathfrak{p}_1,\cdots, \mathfrak{p}_n\}$. Assume that $\overline{\{\mathfrak{p}_i\}}\cap \overline{\{\mathfrak{p}_j\}}=\emptyset$, where $i,j\in \{1,\cdots, n\}$ and $i\neq j$. Denote the canonical morphisms $\Spec(\mathcal O_{X,\mathfrak{p}_j})\rightarrow X$ by $t_j$, where $j\in \{1,\cdots, n\}$. Let
\[\mathcal F_i=\bigcap_{j\neq i}\ker(\mathcal F\rightarrow t_{j*}t_j^*\mathcal F).\]
  Then $\Ass(\mathcal F_i)=\{\mathfrak{p}_i\}$. Moreover,
  \[\mathcal F=\bigoplus_{1\leqslant i\leqslant n} \mathcal F_i.\]
\end{thm}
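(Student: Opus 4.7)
The plan is to use the disjoint-closure hypothesis to reduce the theorem to a local identification on a simple open cover of $X$. Set $Z_j := \overline{\{\mathfrak{p}_j\}}$ and
\[U_i := X \setminus \bigcup_{j \neq i} Z_j.\]
Because the $Z_j$ are pairwise disjoint, every point of $X$ lies in at least one $U_i$: a point in some $Z_k$ lies only in $U_k$, while a point outside $\Supp(\mathcal F)$ lies in every $U_i$. Thus $\{U_i\}_{i=1}^n$ is an open cover of $X$; and since $\mathfrak{p}_i \in U_i$ while $\mathfrak{p}_j \notin U_i$ for $j \neq i$, we have $\Ass(\mathcal F|_{U_i}) = \{\mathfrak{p}_i\}$, so $\mathcal F|_{U_i}$ is $\mathfrak{p}_i$-coprimary.

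The technical heart is to show that on each $U_i$ one has $\mathcal F_i|_{U_i} = \mathcal F|_{U_i}$ and $\mathcal F_k|_{U_i} = 0$ for $k \neq i$. This reduces to two stalk calculations. First, I would show $(t_{j*}t_j^*\mathcal F)_x = 0$ for $x \in U_i$ and $j \neq i$: $t_j^*\mathcal F$ corresponds to the $\mathcal O_{X,\mathfrak{p}_j}$-module $\mathcal F_{\mathfrak{p}_j}$, whose associated primes are $\Ass(\mathcal F) \cap \Spec \mathcal O_{X,\mathfrak{p}_j} = \{\mathfrak{p}_j \mathcal O_{X,\mathfrak{p}_j}\}$ (the disjointness hypothesis forces the only prime of $\Ass(\mathcal F)$ generalizing $\mathfrak{p}_j$ to be $\mathfrak{p}_j$ itself), so $t_j^*\mathcal F$ is supported only at the closed point of $\Spec \mathcal O_{X,\mathfrak{p}_j}$. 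For a small open $V \ni x$ with $V \cap Z_j = \emptyset$, the preimage $t_j^{-1}(V)$ avoids that closed point, so the sections vanish. Second, for $x \in U_i \cap Z_i$, the canonical map $\mathcal F_x \to (t_{i*}t_i^*\mathcal F)_x = \mathcal F_{\mathfrak{p}_i}$ is injective, because $\mathcal F|_{U_i}$ is $\mathfrak{p}_i$-coprimary, so elements of $\mathcal O_{X,x}$ mapping outside $\mathfrak{p}_i \mathcal O_{X,\mathfrak{p}_i}$ are non-zero-divisors on $\mathcal F_x$. Taking intersections, $(\mathcal F_k)_x = \mathcal F_x$ when $k = i$ (all kernels are full, by the first calculation) and $(\mathcal F_k)_x = 0$ when $k \neq i$ (the kernel indexed by $j = i$ is already zero, by the second).

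From this local picture the theorem follows quickly. The sheaf $\mathcal F_i$ vanishes on $\bigcup_{k \neq i} U_k$, whose complement is $Z_i$, so $\Supp(\mathcal F_i) \subset Z_i$ and hence $\Ass(\mathcal F_i) \subset \{\mathfrak{p}_i\}$; combined with $(\mathcal F_i)_{\mathfrak{p}_i} = \mathcal F_{\mathfrak{p}_i} \neq 0$, this yields $\Ass(\mathcal F_i) = \{\mathfrak{p}_i\}$. The canonical map $\bigoplus_{k=1}^n \mathcal F_k \to \mathcal F$ is an isomorphism on each $U_i$ (only the $k = i$ summand is nonzero and it coincides with $\mathcal F|_{U_i}$), hence an isomorphism on $X$. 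I expect the main obstacle to be the first stalk calculation $(t_{j*}t_j^*\mathcal F)_x = 0$ for $x \notin Z_j$: it requires a careful analysis of what $t_j^{-1}$ does to small neighbourhoods and uses the disjointness hypothesis essentially, to reduce $\mathcal F_{\mathfrak{p}_j}$ to a module supported at a single point. Once this is in place, the remaining arguments are essentially bookkeeping.
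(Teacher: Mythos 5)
Your proposal is correct, and it takes a genuinely different route from the paper. The paper deduces this theorem (its Theorem~\ref{thm:direct sum}) from the coprimary filtration machinery: it fixes a coprimary filtration $0=\mathcal G^0\subseteq\cdots\subseteq\mathcal G^n=\mathcal F$ from Theorem~\ref{Thm: coprimary filtration}, uses the exchange statement (Corollary~\ref{cor: filtration}) together with the intersection and sum formulas (Propositions~\ref{Prop:intersection} and \ref{Prop:sum}) in two inductions to identify $\mathcal G^i=\bigcap_{j>i}\ker(\mathcal F\to t_{j*}t_j^*\mathcal F)$ and $\mathcal G^i=\sum_{j\leqslant i}\mathcal F_j$, and finally gets directness of the sum by an $\Ass$ argument; this buys the extra identification $\mathcal F_i=\mathcal G^1$, tying the decomposition to the filtration theory. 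You instead exploit the hypothesis $\overline{\{\mathfrak p_i\}}\cap\overline{\{\mathfrak p_j\}}=\emptyset$ directly to build the open cover $U_i=X\setminus\bigcup_{j\neq i}\overline{\{\mathfrak p_j\}}$, on which $\mathcal F$ is $\mathfrak p_i$-coprimary, and verify by the two stalk computations that $\mathcal F_i|_{U_i}=\mathcal F|_{U_i}$ and $\mathcal F_k|_{U_i}=0$ for $k\neq i$, so that $\bigoplus_k\mathcal F_k\to\mathcal F$ is an isomorphism locally and hence globally; both stalk computations are sound (the first because coherence plus disjointness force $t_j^*\mathcal F$ to be supported at the closed point of $\Spec\mathcal O_{X,\mathfrak p_j}$, the second because generization preserves $U_i$, so non-units outside the prime corresponding to $\mathfrak p_i$ act injectively on $\mathcal F_x$, exactly as in Proposition~\ref{prop:coprimary}). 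Your approach is more elementary and self-contained --- it needs none of the transfinite or filtration results of Sections 3--4 and exhibits the decomposition as Zariski-local --- at the cost of not recovering the relation $\mathcal F_i=\mathcal G^1$ with coprimary filtrations that the paper records. Only routine details remain to be written out: for $x\in U_i\setminus\overline{\{\mathfrak p_i\}}$ one has $x\notin\Supp(\mathcal F)$, so all stalks vanish trivially; and to get $\Ass(\mathcal F_i)\subseteq\{\mathfrak p_i\}$ combine $\Ass(\mathcal F_i)\subseteq\Ass(\mathcal F)$ (or $\Ass\subseteq\Supp$ plus disjointness) with $\Supp(\mathcal F_i)\subseteq\overline{\{\mathfrak p_i\}}$, then use $(\mathcal F_i)_{\mathfrak p_i}=\mathcal F_{\mathfrak p_i}\neq 0$ to conclude equality.
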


 We provide a proof of a generalization of Theorem \ref{thm: nt filtration} for coherent sheaves on locally Noetherian scheme. When operating within this framework, it is essential to utilize results from set theory. To aid in comprehension, we will review some basic concepts of set theory in section 2. As this paper concentrates on commutative algebra and algebraic geometry, we will also outline the proof for results that are not easily accessible in standard textbook, such as \cite{E1977}. Here, we state our result.
\begin{thm}
Let $X$ be a locally Noetherian scheme, $\mathcal F$ be a non-zero quasi-coherent sheaf on $X$. Let $(\Ass(\mathcal F),<)$ be a well-ordering extension of $\Ass(\mathcal F)$ in Example \ref{Eg: key}. Suppose for any affine open subscheme $U$ of $X$, $Ass(\mathcal F|_U)$ is a finite set.
Then there exists a unique filtration of quasi-coherent sheaves on $X$, $(\mathcal F^t)_{t\in \Ass(\mathcal F)}$, satisfying the following properties,
\begin{enumerate}[label=\rm(\alph*)]
\item $\mathcal F^r\supset \mathcal F^s$ if $r<s$,
\item $\mathcal F^0=\mathcal F$, where $0$ is the minimal element of $\Ass(\mathcal F)$,
\item $\Ass(\mathcal F^t/\mathcal F^{t+1})=\{t\}$,
\item If $t$ is a limit point of $(\Ass(\mathcal F),<)$, then $\bigcap\limits_{r<t} \mathcal F^r$ is a quasi-coherent sheaf and
 \[\mathcal F^t=\bigcap_{r<t} \mathcal F^r.\]
\item If $(\Ass(\mathcal F),<)$ is isomorphic to a limit ordinal, then
\[\bigcap \mathcal F^r=0.\]
      If $(\Ass(\mathcal F),<)$ is isomorphic to a successor ordinal, then $\mathcal F^t$ is coprimary where $t$ is the maximal element of $\Ass(\mathcal F)$.
\end{enumerate}

Moreover, we also have
\begin{enumerate}[label=\rm(\alph*)]
\item
\[\mathcal F^{t+1}=\ker(\mathcal F^t\rightarrow i_{*}i^*\mathcal F^t),\]
where $i:\Spec \mathcal O_{X,t}\rightarrow X$ is the canonical morphism.
\item $\Ass(\mathcal F^t)=\{r: r\geqslant t\}$.

\end{enumerate}

\end{thm}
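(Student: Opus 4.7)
The plan is to proceed by transfinite recursion on the well-ordered set $(\Ass(\mathcal F),<)$, extending the construction of Theorem \ref{thm: nt filtration} to the transfinite, quasi-coherent setting. Set $\mathcal F^0 := \mathcal F$. At a successor stage $t+1$, define
\[\mathcal F^{t+1} := \ker\bigl(\mathcal F^t \longrightarrow i_{t,*}i_t^{*}\mathcal F^t\bigr),\]
where $i_t : \Spec \mathcal O_{X,t} \to X$ is the canonical morphism; this is quasi-coherent as the kernel of a morphism of quasi-coherent sheaves. At a limit stage $t$, define $\mathcal F^t := \bigcap_{r<t}\mathcal F^r$ as subsheaves of $\mathcal F$.

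The main technical obstacle is that an arbitrary intersection of quasi-coherent subsheaves need not be quasi-coherent. To handle this, I fix an affine open $U = \Spec A$ and show by transfinite induction on $s$ that the chain $(\mathcal F^r|_U)_{r<s}$ takes only finitely many distinct values. The key input is that, inductively assuming property (e), the transition from $\mathcal F^r$ to $\mathcal F^{r+1}$ strictly shrinks $\mathcal F^r|_U$ only at indices $r$ corresponding to points of $U$ lying in $\Ass(\mathcal F^r|_U) \subseteq \Ass(\mathcal F|_U)$, and the latter set is finite by hypothesis. At limit stages the restriction is determined by the finitely many strict changes already below. Consequently the intersection stabilizes on $U$ and is quasi-coherent there; quasi-coherence on $X$ then follows by gluing over an affine cover.

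With quasi-coherence secured, I verify (a)--(e) and the two moreover statements by transfinite induction. Monotonicity (a), the base case (b), and the limit formula (d) hold by construction. For (c), I compute affine-locally: on $U = \Spec A$ containing the point $t$ (corresponding to a prime $\mathfrak p$), the construction $\mathcal F^{t+1}|_U$ corresponds to $\ker(N \to N_{\mathfrak p})$ where $N = \Gamma(U,\mathcal F^t)$, and standard commutative algebra gives $\Ass(N/\ker) = \{\mathfrak p\}$ as soon as $\mathfrak p$ is minimal in $\Ass(N)$---which is the inductive instance of (e). On affine opens not containing $t$, both sheaves agree on $U$. Property (e) then follows at successor stages from the short exact sequence
\[0 \longrightarrow \mathcal F^{t+1} \longrightarrow \mathcal F^t \longrightarrow \mathcal F^t/\mathcal F^{t+1} \longrightarrow 0\]
combined with (c), and at limit stages from the local stabilization argument. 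The dichotomy in the final item (limit versus successor ordinal) is obtained by inspecting the terminal value on each affine open: in the successor case the last stratum is coprimary by (c), while in the limit case the stabilized restriction on each $U$ must eventually vanish, so the global intersection is zero.

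Uniqueness is handled by a parallel transfinite induction: the first moreover formula, itself a consequence of (c) and (e), forces the successor-stage value given $\mathcal F^t$, and (d) pins down the limit stages; starting from $\mathcal F^0 = \mathcal F$ this determines the entire filtration. The central difficulty throughout is the limit-stage quasi-coherence, and once it is handled via the affine-local finiteness hypothesis, the remainder is a careful but essentially routine transfinite bookkeeping.
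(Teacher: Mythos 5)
Your construction and existence argument follow essentially the same route as the paper: transfinite recursion with kernels at successor stages and intersections at limit stages, plus the affine-local stabilization argument (only finitely many indices in $\Ass(\mathcal F|_U)$ can produce a strict drop of $\mathcal F^r|_U$, since $\Ass(\mathcal F^r/\mathcal F^{r+1})=\{r\}$ forces $\mathcal F^{r+1}|_U=\mathcal F^r|_U$ whenever $r\notin U$) to secure quasi-coherence at limits and the vanishing of the total intersection. That part is sound, modulo small unjustified assertions (e.g.\ that $\mathcal F^{t+1}|_V=\mathcal F^t|_V$ on affine $V$ not containing $t$, which does hold because $\Supp(i_*i^*\mathcal F^t)\subseteq\overline{\{t\}}$ is disjoint from $V$, or equivalently via $\Ass(i_*i^*\mathcal F^t)=\{t\}$ as in Lemma \ref{lemma:calculation}), and modulo a labelling slip: what you call ``property (e)'' in the induction is really the \emph{moreover} property $\Ass(\mathcal F^t)=\{r:r\geqslant t\}$, not item (e) of the statement.

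The genuine gap is in uniqueness. You assert that the kernel formula $\mathcal F^{t+1}=\ker(\mathcal F^t\rightarrow i_*i^*\mathcal F^t)$ is ``itself a consequence of (c) and (e)'' for \emph{any} filtration satisfying (a)--(e), and then let transfinite induction do the rest. But that implication is precisely the hard point, and it is not routine. A competing filtration $(\mathcal G^t)$ is only assumed to satisfy (a)--(e); it is \emph{not} assumed to satisfy $\Ass(\mathcal G^t)=\{r:r\geqslant t\}$, which is a ``moreover'' property of the constructed filtration. By Lemma \ref{Lemma:key}, to conclude $\mathcal G^{t+1}=\ker(\mathcal G^t\rightarrow i_*i^*\mathcal G^t)$ at the first index of divergence $t=r+1$ you need, in addition to $\Ass(\mathcal G^r/\mathcal G^{t})=\{r\}$ from (c), the statement $r\notin\Ass(\mathcal G^{t})$, and condition (c) alone does not give it: the paper's $\mathbb Z$-module example ($M=\mathbb Z\oplus\mathbb Z\oplus\bigoplus_p\mathbb Z/p\mathbb Z$ with $M^1=\mathbb Z\oplus\bigoplus_p\mathbb Z/p\mathbb Z$, etc.) exhibits a filtration satisfying (a)--(d) and the limit part of (e) with the correct quotient supports but with $r\in\Ass(M^{r+1})$ at every step, so the desired exclusion can genuinely fail once the affine-local finiteness is dropped. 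The paper's Step 4 therefore has to work for this: restrict to an affine open $U=\Spec(A)$ containing $r$, use the finiteness of $\Ass(\mathcal G^r|_U)=\{s_1<\cdots<s_n\}$ and properties (c), (d), (e) to show the restricted filtration is constant between consecutive $s_i$ and terminates in $0$ on $U$, and only then deduce $r\notin\Ass(\mathcal G^{t}|_U)$, hence $r\notin\Ass(\mathcal G^t)$, so that Lemma \ref{Lemma:key} applies. Your proposal contains no substitute for this argument; as written, it either assumes the moreover property for $\mathcal G$ (proving a weaker uniqueness statement than claimed) or leaves the key implication unproved.
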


\section{Preliminary}
\subsection{Associated points}
In this subsection, we recall and prove some basic facts of associated points of quasi-coherent sheaves on a locally Noetherian scheme.
\begin{defn}
Let $X$ be a locally Noetherian scheme, $\mathcal F$ be a quasi-coherent sheaf on $X$. We say $x$ is an associated point of $\mathcal F$ if there exists an $s_x\in \mathcal F_x$ such that $\ann(s_x)=\mathfrak{m}_x$, where $\mathfrak{m}_x$ is the maximal ideal of $\mathcal O_{X,x}$. We denote by $\Ass(\mathcal F)$ the set of all associated points of $\mathcal F$.
\end{defn}

\begin{prop}
Let $X$ be a locally Noetherian scheme, $\mathcal F$ be a quasi-coherent sheaf on $X$. Then $\Ass(\mathcal F)\subseteq \Supp(\mathcal F)$.
\end{prop}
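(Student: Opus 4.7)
The plan is essentially a one-line unwinding of the two definitions, so I will structure it as a brief direct argument rather than anything elaborate. First I would take an arbitrary $x \in \Ass(\mathcal F)$ and recall what this means: by the definition just given, there exists a germ $s_x \in \mathcal F_x$ whose annihilator in the local ring $\mathcal O_{X,x}$ equals the maximal ideal $\mathfrak m_x$.

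Next I would observe that the whole argument hinges on the element $s_x$ being nonzero. Since $\mathfrak m_x$ is, by definition, a proper ideal of the local ring $\mathcal O_{X,x}$, the unit element $1$ of $\mathcal O_{X,x}$ does not lie in $\mathfrak m_x = \ann(s_x)$. Consequently $s_x = 1 \cdot s_x \neq 0$, so $\mathcal F_x \neq 0$, which is exactly the statement that $x \in \Supp(\mathcal F)$. This yields the desired inclusion $\Ass(\mathcal F) \subseteq \Supp(\mathcal F)$.

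There is no real obstacle here; the only small subtlety to flag in writing is the convention that local rings in the scheme-theoretic setting are nonzero (so that $\mathfrak m_x \subsetneq \mathcal O_{X,x}$), which is what makes the implication ``$\ann(s_x) = \mathfrak m_x \Rightarrow s_x \neq 0$'' legitimate. This convention is standard for stalks on a scheme at an honest point, so no further justification is required.
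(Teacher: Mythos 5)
Your argument is correct and is essentially the same as the paper's: both unwind the definition of an associated point to produce $s_x\in\mathcal F_x$ with $\ann(s_x)=\mathfrak m_x$ and conclude $\mathcal F_x\neq 0$, hence $x\in\Supp(\mathcal F)$. You merely make explicit the (correct) small point that $s_x\neq 0$ because $\mathfrak m_x$ is a proper ideal, which the paper leaves implicit.
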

\begin{proof}
If $x\in \Ass(\mathcal F)$, then there exists an $s_x\in \mathcal F_x$ such that $\ann(s_x)=\mathfrak{m}_x$, where $\mathfrak{m}_x$ is the maximal ideal of $\mathcal O_{X,x}$. Thus, $\mathcal F_x\neq 0$, hence $x\in \Supp(\mathcal F)$.
\end{proof}

\begin{prop}\label{prop:ann}
Let $A$ be a Noetherian ring, $M$ be an $A$-module. Then $\mathfrak{p}\in \Ass(\widetilde{M})$ if and only if there exists some $m\in M$ such that $\ann(m)=\mathfrak{p}$.
\end{prop}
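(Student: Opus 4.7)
The plan is to unpack the sheaf-theoretic definition of $\Ass(\widetilde M)$ in terms of the module $M_{\mathfrak p}$ and then translate between annihilators on $M$ and annihilators on $M_{\mathfrak p}$ using the Noetherian hypothesis. Since the stalk of $\widetilde M$ at $\mathfrak p$ is $M_{\mathfrak p}$ and the maximal ideal of $\mathcal O_{\Spec(A),\mathfrak p}$ is $\mathfrak p A_{\mathfrak p}$, the condition $\mathfrak p \in \Ass(\widetilde M)$ is, by definition, the existence of $s \in M_{\mathfrak p}$ with $\ann_{A_{\mathfrak p}}(s) = \mathfrak p A_{\mathfrak p}$. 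So the problem reduces to comparing this with the classical condition that $\ann_A(m) = \mathfrak p$ for some $m \in M$.

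For the easy direction $(\Leftarrow)$, assume $\ann_A(m) = \mathfrak p$ and consider $m/1 \in M_{\mathfrak p}$. First, $\mathfrak p A_{\mathfrak p} \subseteq \ann_{A_{\mathfrak p}}(m/1)$ is immediate since every element of $\mathfrak p$ kills $m$. Conversely, if $(a/u)(m/1) = 0$ in $M_{\mathfrak p}$, then $tam = 0$ in $M$ for some $t \notin \mathfrak p$, so $ta \in \ann_A(m) = \mathfrak p$, and primality of $\mathfrak p$ together with $t \notin \mathfrak p$ forces $a \in \mathfrak p$, i.e.\ $a/u \in \mathfrak p A_{\mathfrak p}$.

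For the harder direction $(\Rightarrow)$, suppose $\ann_{A_{\mathfrak p}}(n/u) = \mathfrak p A_{\mathfrak p}$ for some $n \in M$, $u \notin \mathfrak p$. Since $A$ is Noetherian, $\mathfrak p$ is finitely generated, say $\mathfrak p = (a_1,\dots,a_k)$. For each $i$ the relation $a_i(n/u) = 0$ in $M_{\mathfrak p}$ produces $t_i \notin \mathfrak p$ with $t_i a_i n = 0$ in $M$. Setting $t = t_1\cdots t_k \notin \mathfrak p$ and $m := tn \in M$ gives $a_i m = 0$ for each $i$, hence $\mathfrak p \subseteq \ann_A(m)$. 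The reverse inclusion follows from the same localization argument as in $(\Leftarrow)$: if $b m = 0$ then $b \cdot (n/u) = 0$ in $M_{\mathfrak p}$ (since $t$ is invertible in $A_{\mathfrak p}$, one needs to note that $m/1 = t \cdot n/u \cdot u/1$... more directly, $bm=0$ gives $b(tn)=0$ so $(bt)(n/u)=0$ in $M_{\mathfrak p}$), hence $bt \in \mathfrak p A_{\mathfrak p} \cap A = \mathfrak p$, and primality with $t \notin \mathfrak p$ forces $b \in \mathfrak p$.

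The only real obstacle is the clearing-of-denominators step in $(\Rightarrow)$, which genuinely uses that $\mathfrak p$ is finitely generated; without Noetherianness one could only kill finitely many elements of $\mathfrak p$ at a time and would have no single $m \in M$ with annihilator exactly $\mathfrak p$. Everything else is bookkeeping with the defining properties of localization and the fact that $\mathfrak p A_{\mathfrak p} \cap A = \mathfrak p$.
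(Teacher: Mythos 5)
Your proof is correct. Note that the paper does not actually prove this proposition: its ``proof'' is a citation to EGA (Proposition 3.1.2 of the reference \cite{G1965}), so what you have done is supply the standard argument that lies behind that citation. Your reduction is the right one: by the paper's definition of an associated point, $\mathfrak{p}\in\Ass(\widetilde{M})$ means precisely that some $s\in M_{\mathfrak{p}}$ has $\ann_{A_{\mathfrak{p}}}(s)=\mathfrak{p}A_{\mathfrak{p}}$, and both of your translations are sound. In $(\Leftarrow)$ the computation $\ann_{A_{\mathfrak{p}}}(m/1)=\mathfrak{p}A_{\mathfrak{p}}$ is complete (and it implicitly shows $m/1\neq 0$, since the annihilator is proper, so no separate nonvanishing check is needed). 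In $(\Rightarrow)$ you correctly isolate the only place the Noetherian hypothesis enters: writing $\mathfrak{p}=(a_1,\dots,a_k)$ and clearing denominators with a single $t=t_1\cdots t_k\notin\mathfrak{p}$ to get $\mathfrak{p}\subseteq\ann_A(tn)$, while the reverse inclusion uses $\mathfrak{p}A_{\mathfrak{p}}\cap A=\mathfrak{p}$ and primality; the slightly garbled parenthetical about $m/1$ is harmless because the direct argument you give ($bm=0$ implies $(bt)(n/u)=0$ in $M_{\mathfrak{p}}$) is the one that carries the proof. So the proposal is a correct, self-contained replacement for the paper's external reference, and your closing remark about where finite generation of $\mathfrak{p}$ is genuinely needed matches the usual distinction between associated and weakly associated primes for non-Noetherian rings.
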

\begin{proof}
See \cite[Proposition 3.1.2]{G1965}.
\end{proof}

\begin{prop}\label{prop:max}
Let $A$ be a Noetherian ring, $M$ be a non-zero $A$-module. Then every maximal element of the family of ideals, $\{\ann(m):0\neq m\in M\}$, is a prime ideal.
\end{prop}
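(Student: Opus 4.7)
The plan is to pick a maximal element $\mathfrak{p}=\ann(m)$ in the family (with $m\neq 0$) and verify primeness directly from the defining universal property of maximality. Since $m\neq 0$, the ideal $\ann(m)$ is a proper ideal of $A$, so $\mathfrak{p}\neq A$. It remains to show that whenever $a,b\in A$ satisfy $ab\in\mathfrak{p}$ and $b\notin\mathfrak{p}$, one has $a\in\mathfrak{p}$.

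To carry this out I would consider the element $bm\in M$. Because $b\notin\ann(m)$, the element $bm$ is nonzero, so $\ann(bm)$ belongs to the family $\{\ann(m'):0\neq m'\in M\}$. Moreover $\ann(m)\subseteq\ann(bm)$, since any scalar annihilating $m$ annihilates $bm$ as well. By the maximality of $\mathfrak{p}=\ann(m)$ in the family, this containment must be an equality: $\ann(bm)=\mathfrak{p}$. But the hypothesis $ab\in\mathfrak{p}=\ann(m)$ says $(ab)m=a(bm)=0$, so $a\in\ann(bm)=\mathfrak{p}$, as required.

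The only observation needing care is the role of the Noetherian hypothesis on $A$: it is not used in the implication above, but it is precisely what guarantees that the family $\{\ann(m):0\neq m\in M\}$ admits maximal elements (each $\ann(m)$ is an ideal of the Noetherian ring $A$, so any nonempty collection of such ideals has a maximal member by the ascending chain condition). Thus the statement is not vacuous, and no further obstacle remains: the entire argument is a one-step application of the maximality of $\ann(m)$.
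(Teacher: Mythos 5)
Your proof is correct and is essentially the same argument as the paper's: both compare $\ann(m)$ with the annihilator of a nonzero multiple of $m$ and invoke maximality, yours phrased directly (showing $\ann(bm)=\mathfrak{p}$ and concluding $a\in\mathfrak{p}$) and the paper's by contradiction. Your added remarks on properness of $\ann(m)$ and on the Noetherian hypothesis only guaranteeing non-vacuity are accurate but not a different method.
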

\begin{proof}
Let $I=\ann(m)$ be a maximal element of $\{\ann(m):0\neq m\in M\}$. If $I$ is not a prime ideal, then there exists some $a,b\notin I$, such that $ab\in I$. Since $a\notin I$, we have $am\neq 0$. Consider $\ann(am)$, we have $bam=0$ since $ab\in I$, hence $b\in \ann(am)$. Therefore, $I\subset \ann(am)$, which leads to a contradiction.
\end{proof}

\begin{prop}
Let $X$ be a locally Noetherian scheme, $\mathcal F$ be a quasi-coherent sheaf on $X$. Then $\mathcal F=0$ if and only if $\Ass(\mathcal F)=\emptyset$.
\end{prop}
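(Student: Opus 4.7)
The forward direction is immediate from the definition: if $\mathcal F=0$ then every stalk $\mathcal F_x$ vanishes, so no element $s_x$ with $\ann(s_x)=\mathfrak m_x$ can exist and $\Ass(\mathcal F)=\emptyset$. For the converse I would argue the contrapositive, producing an associated point whenever $\mathcal F\neq 0$.

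The strategy is a standard reduction to the affine Noetherian case combined with the two preceding propositions. Since $\mathcal F\neq 0$, some stalk $\mathcal F_x$ is non-zero; I would choose an affine open neighborhood $U=\Spec A$ of $x$, and use local Noetherianity of $X$ to conclude that $A$ is Noetherian and that $\mathcal F|_U=\widetilde M$ for a non-zero $A$-module $M$ (taking $M=\mathcal F(U)$, non-zero because its localization at $x$ is $\mathcal F_x\neq 0$). The family of ideals $\{\ann(m):0\neq m\in M\}$ is then non-empty, and by the Noetherian property it contains a maximal element $\ann(m)$. Proposition \ref{prop:max} identifies this maximal element as a prime ideal $\mathfrak p$, and Proposition \ref{prop:ann} yields $\mathfrak p\in\Ass(\widetilde M)$.

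The only remaining step is to transfer this to an associated point of $\mathcal F$ on $X$. This is the sole point that needs a brief verification: the stalk $\mathcal F_{\mathfrak p}$ coincides with $M_{\mathfrak p}$, and the maximal ideal of $\mathcal O_{X,\mathfrak p}$ coincides with $\mathfrak p A_{\mathfrak p}$, regardless of whether one computes on $U$ or on $X$. Hence the element of $M_{\mathfrak p}$ whose annihilator in $A_{\mathfrak p}$ is $\mathfrak p A_{\mathfrak p}$ (guaranteed by Proposition \ref{prop:ann}) witnesses $\mathfrak p\in\Ass(\mathcal F)$, completing the proof. I do not anticipate a serious obstacle here, as the main algebraic work has already been carried out in Propositions \ref{prop:ann} and \ref{prop:max}; the present statement is essentially the sheaf-theoretic globalization of those results.
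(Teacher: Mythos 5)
Your proposal is correct: the reduction to a non-zero module over a Noetherian affine chart, the choice of a maximal annihilator ideal, and the use of Propositions \ref{prop:max} and \ref{prop:ann} together with the fact that stalks and local rings are unchanged on the open subset give a complete argument. The paper itself gives no proof but only cites \cite{G1965}, and your argument is precisely the standard proof of that cited result, so there is nothing to add.
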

\begin{proof}
See \cite[Corollary 3.1.5]{G1965}.
\end{proof}

\begin{prop}
Let $X$ be a locally Noetherian scheme,
\[\xymatrix{0\ar[r]& \mathcal {F'}\ar[r]&\mathcal F\ar[r]& \mathcal {F"}\ar[r]& 0}\]
be a short exact sequence of quasi-coherent sheaves on $X$. Then $\Ass(\mathcal F')\subseteq \Ass(\mathcal F)\subseteq \Ass(\mathcal F')\cup \Ass(\mathcal F")$.
\end{prop}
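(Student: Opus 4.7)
The plan is to reduce everything to the stalks and argue at the level of $\mathcal{O}_{X,x}$-modules. Since taking stalks is an exact functor, for each $x \in X$ the sequence
\[0 \longrightarrow \mathcal{F}'_x \longrightarrow \mathcal{F}_x \longrightarrow \mathcal{F}''_x \longrightarrow 0\]
is short exact, and the definition of $\Ass$ is stalk-local: $x \in \Ass(\mathcal{F})$ iff some $s \in \mathcal{F}_x$ has $\ann(s) = \mathfrak{m}_x$. So it suffices to prove the two inclusions point by point, using only the three facts that the left map is injective, the right map is surjective, and $\mathfrak{m}_x$ is a maximal ideal.

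For the first inclusion, I would take $x \in \Ass(\mathcal{F}')$ and pick $s \in \mathcal{F}'_x$ with $\ann(s) = \mathfrak{m}_x$. Viewing $s$ inside $\mathcal{F}_x$ via the injection $\mathcal{F}'_x \hookrightarrow \mathcal{F}_x$, its annihilator is unchanged (annihilators are preserved under injective maps), hence $x \in \Ass(\mathcal{F})$.

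For the second inclusion, I would take $x \in \Ass(\mathcal{F})$ and pick $s \in \mathcal{F}_x$ with $\ann(s) = \mathfrak{m}_x$, and let $\bar{s}$ denote its image in $\mathcal{F}''_x$. Then $\mathfrak{m}_x \cdot \bar{s} = 0$, so $\ann(\bar{s}) \supseteq \mathfrak{m}_x$. I split into two cases. If $\bar{s} \neq 0$, then $\ann(\bar{s})$ is a proper ideal containing $\mathfrak{m}_x$, and by maximality it must equal $\mathfrak{m}_x$, giving $x \in \Ass(\mathcal{F}'')$. If instead $\bar{s} = 0$, exactness says $s$ lifts to some $s' \in \mathcal{F}'_x$, and the injectivity of $\mathcal{F}'_x \to \mathcal{F}_x$ again preserves annihilators, so $\ann(s') = \ann(s) = \mathfrak{m}_x$, whence $x \in \Ass(\mathcal{F}')$.

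There is no real obstacle here; the only subtlety is remembering that we need $s$ (respectively $\bar{s}$, $s'$) to be nonzero in order for the annihilator condition $\ann(\cdot) = \mathfrak{m}_x$ to be meaningful, and this is automatic because $\mathfrak{m}_x$ is a proper ideal whenever $\mathcal{O}_{X,x} \neq 0$, i.e.\ at any point of $X$. The argument is essentially the classical module-theoretic proof (cf.\ \cite[\S3.1]{G1965}) transported to the stalks.
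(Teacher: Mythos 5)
Your proof is correct and complete. Note, though, that the paper does not actually prove this proposition: it simply cites \cite[Proposition 3.1.7]{G1965}, so your argument is a genuinely self-contained alternative. What makes your route work so smoothly is that the paper's definition of $\Ass$ is already stalkwise and uses the \emph{maximal} ideal $\mathfrak m_x$ of $\mathcal O_{X,x}$: in the case $\bar s\neq 0$ you only get $\ann(\bar s)\supseteq \mathfrak m_x$ a priori, and it is precisely the maximality of $\mathfrak m_x$ (together with $\ann(\bar s)$ being proper) that upgrades this to equality. In the classical module-theoretic statement over a Noetherian ring $A$ with an arbitrary prime $\mathfrak p=\ann(m)$, this step fails, and the standard argument (as in EGA) instead passes to the cyclic submodule $Am\cong A/\mathfrak p$ and splits into the cases $Am\cap M'=0$ or $\neq 0$, using that $A/\mathfrak p$ is a domain so that every nonzero element of $Am$ has annihilator exactly $\mathfrak p$. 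So your maximality trick buys a shorter proof, but only for the stalk-local formulation; the cyclic-submodule argument is what one needs for the general affine statement (equivalently, after localizing at $x$ the two become the same, which is why your reduction is legitimate here). The only hygiene points---that $s$, $\bar s$, $s'$ must be nonzero for the annihilator condition to make sense, and that injections preserve annihilators---you have addressed explicitly.
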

\begin{proof}
See \cite[Proposition 3.1.7]{G1965}.
\end{proof}

\begin{prop}
Let $X$ be a locally Noetherian scheme, $\mathcal F$ be a quasi-coherent sheaf on $X$ and $x\in X$. Then $x\in \Ass(\mathcal F)$ if and only if there exists an open neighborhood $U$ of $x$ and a section $s\in \Gamma(U,\mathcal F)$, such that $x$ is a generic point of $\Supp(\mathcal O_Us)$.
\end{prop}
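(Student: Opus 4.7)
The plan is to reduce to the affine setting via quasi-coherence and then argue on the module side using Propositions~\ref{prop:ann} and~\ref{prop:max}. In both directions I shrink $U$ so that $U=\Spec A$ is affine with $\mathcal F|_U=\widetilde M$ and $A$ Noetherian, and write $\mathfrak p\subset A$ for the prime corresponding to $x$; under this identification $\mathcal F_x=M_\mathfrak p$ and $\mathfrak m_x=\mathfrak p A_\mathfrak p$, so $x\in\Ass(\mathcal F)$ becomes the statement $\mathfrak p\in\Ass(\widetilde M)$.

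For the forward direction, suppose $x\in\Ass(\mathcal F)$. I shall invoke Proposition~\ref{prop:ann} to produce an element $m\in M$ with $\ann_A(m)=\mathfrak p$, and take $s=m\in\Gamma(U,\mathcal F)$. The subsheaf $\mathcal O_U s\subseteq\mathcal F|_U$ is the quasi-coherent sheaf associated to the cyclic submodule $Am\cong A/\mathfrak p$, hence $\Supp(\mathcal O_U s)=V(\mathfrak p)$, which is irreducible with unique generic point $x$; this is exactly what is required.

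For the converse, in the same affine setup $s$ becomes an element of $M$, the subsheaf $\mathcal O_U s$ corresponds to $A/\ann_A(s)$, and so $\Supp(\mathcal O_U s)=V(\ann_A(s))$. The genericity hypothesis says precisely that $\mathfrak p$ is a minimal prime over $\ann_A(s)$. After localising at $\mathfrak p$, the cyclic $A_\mathfrak p$-module $N:=A_\mathfrak p\cdot(s/1)\subseteq M_\mathfrak p$ is nonzero and has only $\mathfrak p A_\mathfrak p$ in its support, so some power of $\mathfrak p A_\mathfrak p$ kills $s/1$. I shall then apply Proposition~\ref{prop:max} to $N$: a maximal element $\ann(y_0)$ of $\{\ann(y):0\neq y\in N\}$ is prime, contains a power of $\mathfrak p A_\mathfrak p$, and sits inside the maximal ideal $\mathfrak p A_\mathfrak p$, so must equal $\mathfrak p A_\mathfrak p=\mathfrak m_x$; thus $y_0\in\mathcal F_x$ realises $x$ as an associated point.

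The only mildly subtle step is the converse, where one cannot conclude $\ann_{A_\mathfrak p}(s/1)=\mathfrak p A_\mathfrak p$ directly, since a priori only the radical equals $\mathfrak p A_\mathfrak p$; Proposition~\ref{prop:max} (equivalently, the existence of an associated prime in a nonzero Noetherian module) is what upgrades this to an actual element with the exact annihilator and closes the argument.
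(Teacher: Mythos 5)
Your proof is correct. Note that the paper does not actually prove this proposition; it only cites EGA (\cite[Proposition 3.1.3]{G1965}), so your argument supplies a self-contained proof, and it is essentially the standard one behind the cited result: reduce to an affine chart $U=\Spec A$ with $\mathcal F|_U=\widetilde M$, use Proposition \ref{prop:ann} to get $m\in M$ with $\ann_A(m)=\mathfrak p$ in the forward direction (so $\Supp(\mathcal O_U m)=V(\mathfrak p)$ has generic point $x$), and in the converse localise at $\mathfrak p$, observe that minimality of $\mathfrak p$ over $\ann_A(s)$ forces $\sqrt{\ann_{A_\mathfrak p}(s/1)}=\mathfrak p A_\mathfrak p$, and then use Proposition \ref{prop:max} to produce $y_0\in A_\mathfrak p(s/1)\subseteq\mathcal F_x$ with $\ann(y_0)=\mathfrak m_x$. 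Two small points you use implicitly and should perhaps say explicitly: the maximal element of $\{\ann(y):0\neq y\in N\}$ exists because $A_\mathfrak p$ is Noetherian (Proposition \ref{prop:max} only asserts primality of maximal elements, not their existence), and shrinking the given $U$ to an affine neighbourhood preserves the hypothesis because all generizations of $x$ inside $\Supp(\mathcal O_U s)$ already lie in any open set containing $x$, so $x$ remains a generic point of the support of the restricted section. Your remark that one cannot directly claim $\ann_{A_\mathfrak p}(s/1)=\mathfrak p A_\mathfrak p$, only its radical, correctly identifies the one genuinely delicate step.
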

\begin{proof}
See \cite[Proposition 3.1.3]{G1965}.
\end{proof}

\begin{prop}\label{prop:pull-back}
Let $X$ be a locally Noetherian scheme, $\mathcal F$ be a quasi-coherent sheaf on $X$, $x\in X$ and $i:\Spec(\mathcal O_{X,x})\rightarrow X$ be the canonical morphism. Then $\mathfrak{p}\in \Ass(i^*\mathcal F)$ if and only if $i(\mathfrak{p})\in \Ass(\mathcal F)$.
\end{prop}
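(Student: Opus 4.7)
The plan is to reduce the statement to a classical commutative-algebraic fact about compatibility of associated primes with localization, and then apply Proposition \ref{prop:ann}. Since $X$ is locally Noetherian, I choose an affine open neighborhood $U=\Spec A$ of $x$ with $A$ Noetherian, and write $\mathcal F|_U=\widetilde M$ for some $A$-module $M$. Let $\mathfrak m$ denote the prime of $A$ corresponding to $x$; then $\mathcal O_{X,x}=A_{\mathfrak m}$, and since the canonical morphism $i$ factors through $U$ via the localization $A\to A_{\mathfrak m}$, we have $i^*\mathcal F=\widetilde{M_{\mathfrak m}}$ as a quasi-coherent sheaf on $\Spec A_{\mathfrak m}$. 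A prime $\mathfrak p$ of $A_{\mathfrak m}$ pulls back to $\mathfrak q:=\mathfrak p\cap A\subseteq\mathfrak m$, and $i(\mathfrak p)=\mathfrak q$ as a point of $X$. Via Proposition \ref{prop:ann}, the statement becomes: there exists $m'\in M_{\mathfrak m}$ with $\ann_{A_{\mathfrak m}}(m')=\mathfrak q A_{\mathfrak m}$ if and only if there exists $m\in M$ with $\ann_A(m)=\mathfrak q$.

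For the easy implication, given $m\in M$ with $\ann_A(m)=\mathfrak q$, I verify directly that $\ann_{A_{\mathfrak m}}(m/1)=\mathfrak q A_{\mathfrak m}$: the inclusion $\supseteq$ is immediate, and for $\subseteq$, if $a\cdot m/1=0$ in $M_{\mathfrak m}$ then $tam=0$ in $M$ for some $t\notin\mathfrak m$, so $ta\in\mathfrak q$; primality of $\mathfrak q$ combined with $t\notin\mathfrak m\supseteq\mathfrak q$ forces $a\in\mathfrak q$. For the converse, suppose $\ann_{A_{\mathfrak m}}(m/s)=\mathfrak q A_{\mathfrak m}$; since $s$ is a unit in $A_{\mathfrak m}$ we may replace $m/s$ by $m/1$. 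Choose finitely many generators $a_1,\ldots,a_n$ of $\mathfrak q$, available by the Noetherian hypothesis. For each $i$, $a_i\cdot m/1=0$ in $M_{\mathfrak m}$ yields $t_i\notin\mathfrak m$ with $t_i a_i m=0$ in $M$; setting $t=t_1\cdots t_n\notin\mathfrak m$, we obtain $\mathfrak q\cdot(tm)=0$. The reverse containment $\ann_A(tm)\subseteq\mathfrak q$ follows by the same primality argument, and $tm\neq 0$ because $t$ is a unit in $A_{\mathfrak m}$ while $m/1\neq 0$ (its annihilator is the proper ideal $\mathfrak q A_{\mathfrak m}$). Hence $\ann_A(tm)=\mathfrak q$.

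The main technical point lies in this backward direction: one must exhibit an honest element of $M$, not merely of $M_{\mathfrak m}$, whose annihilator equals $\mathfrak q$ on the nose. This is exactly where Noetherianness of $A$ enters essentially, through finite generation of $\mathfrak q$, permitting a single $t$ to clear all denominators simultaneously. Everything else is routine localization bookkeeping, relying only on primality of $\mathfrak q$ and the standard correspondence between quasi-coherent sheaves on $\Spec A$ and $A$-modules.
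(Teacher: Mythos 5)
Your proof is correct, but it takes a different route from the paper's. The paper reduces to the affine case and then simply identifies stalks: by transitivity of localization, $(i^*\mathcal F)_{\mathfrak p}=(M_x)_{\mathfrak p}\cong M_{i(\mathfrak p)}=\mathcal F_{i(\mathfrak p)}$ as modules over $(\mathcal O_{X,x})_{\mathfrak p}\cong \mathcal O_{X,i(\mathfrak p)}$, and since the paper's definition of an associated point is purely stalk-local (existence of a stalk element whose annihilator is the maximal ideal of the local ring), the equivalence is immediate --- no appeal to Proposition \ref{prop:ann} and no essential use of Noetherianness beyond the standing hypothesis. You instead translate both sides through Proposition \ref{prop:ann} and prove by hand the classical localization formula $\Ass_{A_{\mathfrak m}}(M_{\mathfrak m})=\{\mathfrak q A_{\mathfrak m}:\mathfrak q\in\Ass_A(M),\ \mathfrak q\subseteq\mathfrak m\}$: the forward direction by the unit/primality bookkeeping, the backward direction by choosing finitely many generators of $\mathfrak q$ and clearing denominators with a single $t\notin\mathfrak m$. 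All steps check out (in particular $tm\neq 0$ and $\ann_A(tm)=\mathfrak q$ are justified correctly), so the argument is complete. What your approach buys is a self-contained, element-level proof of the underlying commutative-algebra fact, at the cost of invoking finite generation of $\mathfrak q$; what the paper's approach buys is brevity and the observation that, with the stalkwise definition of $\Ass$, the proposition is essentially a tautology about iterated localization, so the Noetherian hypothesis plays no real role there.
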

\begin{proof}
Without loss of generality, we may assume $X=\Spec(A)$, where $A$ is a Noetherian ring. Let $\mathcal F=\widetilde M$, where $M$ is an $A$-module. Let $\mathfrak{p}\in \Spec(\mathcal O_{X,x})$, we have $M_{i(p)}=(M_x)_\mathfrak{p}$, hence $(i^*\mathcal F)_\mathfrak{p}=\mathcal F_{i(\mathfrak{p})}$. Therefore,
$\mathfrak{p}\in \Ass(i^*\mathcal F)$ if and only if $i(\mathfrak{p})\in \Ass(\mathcal F)$.
\end{proof}

\begin{prop}\label{prop:min}
Let $X$ be a locally Noetherian scheme, $\mathcal F$ be a quasi-coherent sheaf on $X$,
$\mathfrak{p}$ be a generic point of $\Supp(\mathcal F)$. Then $\mathfrak{p}\in \Ass(\mathcal F)$.
\end{prop}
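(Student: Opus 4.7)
The plan is to reduce to the affine setting and then exploit the minimality of $\mathfrak{p}$ in $\Supp(\mathcal F)$ together with Proposition \ref{prop:max}. Pick an affine open neighborhood $U=\Spec(A)$ of $\mathfrak{p}$; since $\Ass$ and $\Supp$ are both local in nature, we may assume $X=\Spec(A)$ with $A$ a Noetherian ring, $\mathcal F=\widetilde{M}$, and $\mathfrak{p}$ a minimal prime of $\Supp(M)$. Unraveling the definition of the associated point, we need to produce an element $s\in M_\mathfrak{p}$ whose annihilator in $A_\mathfrak{p}$ equals $\mathfrak{p}A_\mathfrak{p}$.

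To do so, I would pass to the local ring $A_\mathfrak{p}$ and consider the $A_\mathfrak{p}$-module $M_\mathfrak{p}$, which is nonzero because $\mathfrak{p}\in\Supp(M)$. Since $A_\mathfrak{p}$ is Noetherian, the family of ideals
\[\{\ann_{A_\mathfrak{p}}(m/s) : 0\neq m/s\in M_\mathfrak{p}\}\]
admits a maximal element, and by Proposition \ref{prop:max} that maximal element is a prime ideal of $A_\mathfrak{p}$, say $\mathfrak{q}A_\mathfrak{p}$ with $\mathfrak{q}\subseteq \mathfrak{p}$. I would then argue $\mathfrak{q}=\mathfrak{p}$: if instead $\mathfrak{q}\subsetneq \mathfrak{p}$, then localizing further at $\mathfrak{q}$ we see that the image of $m/s$ in $M_\mathfrak{q}$ is nonzero (its annihilator in $A_\mathfrak{p}$ sits inside $\mathfrak{q}A_\mathfrak{p}$, so no element of $A_\mathfrak{p}\setminus \mathfrak{q}A_\mathfrak{p}$ can annihilate it). This forces $\mathfrak{q}\in\Supp(M)$, contradicting the minimality of $\mathfrak{p}$.

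Therefore $\mathfrak{q}=\mathfrak{p}$, so there exists $s\in M_\mathfrak{p}$ with $\ann_{A_\mathfrak{p}}(s)=\mathfrak{p}A_\mathfrak{p}$, which by the definition at the stalk means $\mathfrak{p}\in\Ass(\mathcal F)$. The main obstacle is the step where one ensures that the maximal annihilator prime produced by Proposition \ref{prop:max} cannot correspond to a strictly smaller prime; the key point is that nonvanishing of a specific element persists under further localization as long as the localizing set avoids its annihilator, which is exactly what the inclusion $\mathfrak{q}\subsetneq\mathfrak{p}$ would allow. Once this is in hand, everything else is a clean translation between stalk-level data and the global definition of an associated point.
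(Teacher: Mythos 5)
Your proof is correct, and it takes a somewhat more hands-on route than the paper's. Both arguments begin the same way: restrict to an affine neighborhood $U=\Spec(A)$ of $\mathfrak{p}$ and use the minimality of $\mathfrak{p}$ in $\Supp(\mathcal F)$ to see that $M_\mathfrak{q}=0$ for every prime $\mathfrak{q}\subsetneq\mathfrak{p}$, while $M_\mathfrak{p}\neq 0$. The paper then finishes by pulling back along $i\colon\Spec(\mathcal O_{X,\mathfrak{p}})\rightarrow X$: it observes $\Supp(i^*\mathcal F)=\{\mathfrak{p}\}$, invokes the cited fact that a nonzero quasi-coherent sheaf has nonempty $\Ass$ together with $\Ass\subseteq\Supp$ to get $\Ass(i^*\mathcal F)=\{\mathfrak{p}_\mathfrak{p}\}$, and transfers back via Proposition \ref{prop:pull-back}. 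You instead work directly with the stalk $M_\mathfrak{p}$ as an $A_\mathfrak{p}$-module: you pick a maximal annihilator $\ann_{A_\mathfrak{p}}(x)=\mathfrak{q}A_\mathfrak{p}$ (which exists since $A_\mathfrak{p}$ is Noetherian and is prime by Proposition \ref{prop:max}), and rule out $\mathfrak{q}\subsetneq\mathfrak{p}$ by noting that $x$ survives in $(M_\mathfrak{p})_{\mathfrak{q}A_\mathfrak{p}}\cong M_\mathfrak{q}$ because its annihilator lies in $\mathfrak{q}A_\mathfrak{p}$, contradicting minimality. In effect you inline the proof of the nonemptiness-of-$\Ass$ ingredient that the paper outsources to the EGA citation, and you exhibit an explicit element of $M_\mathfrak{p}$ with annihilator $\mathfrak{p}A_\mathfrak{p}$, which matches the paper's stalkwise definition of an associated point without needing Proposition \ref{prop:pull-back}. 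Your version is more self-contained and elementary; the paper's is shorter given its cited background and reuses lemmas ($\Ass\subseteq\Supp$, the pull-back compatibility) that it needs elsewhere anyway.
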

\begin{proof}
Let $U=\Spec(A)$ be an open neighborhood of $\mathfrak{p}$, where $A$ is a Noetherian ring and $\mathcal F|_U=\widetilde{M}$, where $M$ is an $A$-module. Let $i: \Spec(A_\mathfrak{p})\rightarrow X$ be the canonical morphism. We identify elements of $\Spec(A_\mathfrak{p})$ with $\{\mathfrak{q}\in \Spec(A):\mathfrak{q}\subseteq \mathfrak{p}\}$. Let $\mathfrak{q}\in \Spec(A)$ such that $\mathfrak{q}\subset \mathfrak{p}$. Since $\mathfrak{p}$ is a generic point of $\Supp(\mathcal F)$, we have $M_\mathfrak{q}=0$, and thus $(i^*\mathcal F)_\mathfrak{q}=0$. Note that \[\Ass(i^*\mathcal F)\subseteq \Supp(i^*\mathcal F)=\{\mathfrak{p}\},\]
 we have $\Ass(i^*\mathcal F)=\{\mathfrak{p}\}$.  By Proposition \ref{prop:pull-back}, $\mathfrak{p}\in \Ass(\mathcal F)$.
\end{proof}

\begin{prop}\label{prop:coprimary}
Let $A$ be a Noetherian ring, $M$ be an $A$-module, and $\mathfrak{p}\in \Spec(A)$. Let $r_a: M\rightarrow M$ be the $A$-module homomorphism defined as follows: for any $a \in A$ and $m\in M$, $r_a(m)=am$. Then $\Ass(\widetilde M)=\{\mathfrak{p}\}$ if and only if the following conditions hold.
\begin{enumerate}
\item If $a\in \mathfrak{p}$, then for any $m\in M$, there exists some $n\in \mathbb{N}$, such that $a^nm=0$.
\item If $a\notin \mathfrak{p}$, then $r_a$ is injective.
\end{enumerate}
\end{prop}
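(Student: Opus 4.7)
The plan is to prove the two implications separately, using Propositions \ref{prop:ann}, \ref{prop:max}, and \ref{prop:min}. The key reduction is that for any nonzero $m\in M$, the cyclic submodule $Am$ is finitely generated (hence has well-behaved support), and its associated primes are controlled by those of $M$ via the short-exact-sequence proposition.

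For the forward direction, I would assume $\Ass(\widetilde M)=\{\mathfrak p\}$ and, given a nonzero $m\in M$, invoke the short exact sequence proposition to get $\Ass(\widetilde{Am})\subseteq\{\mathfrak p\}$; nonvanishing of $Am$ forces equality. Since $Am$ is cyclic, $\Supp(\widetilde{Am})=V(\ann(m))$, so applying Proposition \ref{prop:min} to the minimal primes over $\ann(m)$ yields $\sqrt{\ann(m)}=\mathfrak p$. Condition (1) is then immediate: for $a\in\mathfrak p$, some power of $a$ lies in $\ann(m)$. Condition (2) follows by contraposition: if $a\notin\mathfrak p$ and $am=0$ for some nonzero $m$, then $a\in\ann(m)\subseteq\sqrt{\ann(m)}=\mathfrak p$, a contradiction.

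For the backward direction, I would first assume $M\neq 0$ (the case $M=0$ must be tacitly excluded, since conditions (1) and (2) hold vacuously there while $\Ass(\widetilde M)=\emptyset$). Propositions \ref{prop:ann} and \ref{prop:max} then produce a nonzero $m_0\in M$ with $\mathfrak q:=\ann(m_0)$ prime and lying in $\Ass(\widetilde M)$. Condition (2) applied to $m_0$ shows $\mathfrak q\subseteq\mathfrak p$, while condition (1) combined with primality of $\mathfrak q$ gives $\mathfrak p\subseteq\mathfrak q$; hence $\mathfrak q=\mathfrak p$ and $\mathfrak p\in\Ass(\widetilde M)$. The same two-sided argument applied to any other $\mathfrak q'=\ann(m)\in\Ass(\widetilde M)$ (with $m\neq 0$ since $\mathfrak q'$ is proper) forces $\mathfrak q'=\mathfrak p$, concluding $\Ass(\widetilde M)=\{\mathfrak p\}$.

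The main subtle point is to remember to pass to the cyclic submodule $Am$ in the forward direction, since $M$ itself is not assumed finitely generated and one cannot apply $\Supp(\widetilde M)=V(\ann(M))$ or Proposition \ref{prop:min} directly to $M$. Beyond this minor care, the proof is a straightforward interplay between the characterization of associated primes via annihilators and the fact that maximal annihilators are prime.
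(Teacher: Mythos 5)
Your proposal is correct and follows essentially the same route as the paper's proof: in the forward direction both pass to the cyclic submodule $Am$, identify $\Supp(\widetilde{Am})=V(\ann(m))$ and use Proposition \ref{prop:min} to get $\sqrt{\ann(m)}=\mathfrak{p}$, and in the backward direction both show any prime annihilator must coincide with $\mathfrak{p}$. Your explicit handling of the $M\neq 0$ caveat and of the nonemptiness of $\Ass(\widetilde M)$ via Propositions \ref{prop:ann} and \ref{prop:max} is a small tidiness improvement over the paper's write-up, but not a different argument.
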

\begin{proof}
$\Rightarrow$ Let $0\neq m\in M$ and $N=Am$. Observe that $\Ass(\widetilde{N})\subseteq \Ass(\widetilde{M})=\{\mathfrak{p}\}$, we have $\Ass(\widetilde{N})=\{\mathfrak{p}\}$. By Proposition \ref{prop:max}, we have $\ann(m)\subseteq \mathfrak{p}$. If $a\notin \mathfrak{p}$, then $a\notin \ann(m)$, hence $am\neq 0$. Note that for any $m\neq 0$, $am\neq 0$, we can see that $r_a$ is injective. Denote $\ann(m)$ by $I$.
 Assume that $a\in \mathfrak{p}$. Notice that $N\cong A/I$, we have $\Supp(\widetilde{N})=\Supp(\widetilde{A/I})=\{\mathfrak{q}: \mathfrak{q}\supseteq I, \text{$\mathfrak{q}$ is a prime}\}$. Assume that $\mathfrak{q}\in \Supp(\widetilde{N})$, by Proposition \ref{prop:min}, we have $\mathfrak{q}\supseteq \mathfrak{p}$ since $\Ass(\widetilde{N})=\{\mathfrak{p}\}$. Therefore, $\sqrt{I}=\mathfrak{p}$. Thus there exists some $n$ such that $a^n\in I$, hence $a^nm=0$. \\

$\Leftarrow$ Assume that $m$ is a non-zero element of $M$ such that
$\ann(m)=\mathfrak{q}$, where $\mathfrak{q}$ is a prime. Assume $a\in \mathfrak{q}\backslash \mathfrak{p}$. Since $r_a$ is injective, $am\neq 0$. However, $a\in \mathfrak{q}=\ann(m)$, so $am=0$, which leads to a contradiction. We have
$\mathfrak{q}\subseteq \mathfrak{p}$. If $a\in \mathfrak{p}$, then there exists some $n$ such that $a^nm=0$, so $a^n\in \mathfrak{q}$. Since $\mathfrak{q}$ is a prime, $a\in \mathfrak{q}$, hence $\mathfrak{p}\subseteq \mathfrak{q}$. Therefore, $\mathfrak{p}=\mathfrak{q}$, thus $\Ass(\widetilde M)=\{\mathfrak{p}\}$.
\end{proof}

\begin{prop}\label{prop: ass localization}
Let $A$ be a Noetherian ring, $M$ be an $A$-module, $\mathfrak{p}\in \Spec(A)$ and
$i:\Spec(A_\mathfrak{p})\rightarrow \Spec(A)$ be a canonical morphism. Let $\mathcal F=\widetilde{M_\mathfrak{p}}$ be a quasi-coherent sheaf on $\Spec(A_\mathfrak{p})$. If $\Ass(\mathcal F)=\{\mathfrak{p}_\mathfrak{p}\}$, then $\Ass(i_*\mathcal F)=\{\mathfrak{p}\}$.
\end{prop}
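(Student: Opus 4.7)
The plan is to reduce the question to a direct application of Proposition \ref{prop:coprimary}. Since $i: \Spec(A_\mathfrak{p}) \to \Spec(A)$ is an affine morphism, the pushforward $i_*\mathcal{F}$ is the quasi-coherent sheaf on $\Spec(A)$ associated to $M_\mathfrak{p}$ viewed as an $A$-module through the structure map $A \to A_\mathfrak{p}$. So the goal is to show $\Ass(\widetilde{M_\mathfrak{p}}) = \{\mathfrak{p}\}$ when $M_\mathfrak{p}$ is regarded as an $A$-module.

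The hypothesis $\Ass(\mathcal{F}) = \{\mathfrak{p}_\mathfrak{p}\}$, combined with the forward direction of Proposition \ref{prop:coprimary} applied to the Noetherian ring $A_\mathfrak{p}$, the $A_\mathfrak{p}$-module $M_\mathfrak{p}$, and the prime $\mathfrak{p}_\mathfrak{p}$, provides the two coprimary conditions over $A_\mathfrak{p}$. I would then transfer each condition to the $A$-module structure on $M_\mathfrak{p}$ using the fact that scalar multiplication by $a \in A$ coincides with scalar multiplication by $a/1 \in A_\mathfrak{p}$. Concretely, for $a \in \mathfrak{p}$ we have $a/1 \in \mathfrak{p}_\mathfrak{p}$, so for every $m \in M_\mathfrak{p}$ there exists $n$ with $(a/1)^n m = 0$, which reads $a^n m = 0$; and for $a \notin \mathfrak{p}$, the element $a/1$ is a unit in $A_\mathfrak{p}$, so multiplication by $a/1$ on $M_\mathfrak{p}$ is bijective, in particular injective, so multiplication by $a$ on $M_\mathfrak{p}$ is injective.

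Finally, I would invoke the reverse direction of Proposition \ref{prop:coprimary} applied to $A$, $M_\mathfrak{p}$, and $\mathfrak{p}$ to conclude $\Ass(\widetilde{M_\mathfrak{p}}) = \{\mathfrak{p}\}$, that is, $\Ass(i_*\mathcal{F}) = \{\mathfrak{p}\}$. I do not expect any real obstacle: the argument is essentially a formal manipulation of Proposition \ref{prop:coprimary}, with the only point of care being the identification of $i_*\widetilde{M_\mathfrak{p}}$ with $M_\mathfrak{p}$ sheafified over $A$, which is immediate since $i$ is affine.
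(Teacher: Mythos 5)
Your proposal is correct and follows essentially the same route as the paper: identify $i_*\mathcal F$ with $\widetilde{M_\mathfrak{p}}$ over $A$, use the forward direction of Proposition \ref{prop:coprimary} over $A_\mathfrak{p}$ to get the two coprimary conditions, transfer them to the $A$-action (your observation that $a/1$ is a unit when $a\notin\mathfrak{p}$ is a slight shortcut the paper leaves implicit), and conclude by the converse direction of Proposition \ref{prop:coprimary} over $A$.
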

\begin{proof}
Let $r_a$ be an $A$-module homomorphism such that $r_a(m)=am$, where $a\in A$ and $m\in M_\mathfrak{p}$.
Since $\Ass(\mathcal F)=\{\mathfrak{p}_\mathfrak{p}\}$, by Proposition \ref{prop:coprimary}, $r_a$ is injective if $a\notin \mathfrak{p}$. If $a\in \mathfrak{p}$, then for any $m\in M_\mathfrak{p}$, there exists some $n$ such that $a^nm=0$. Notice that $i_*\mathcal F\cong \widetilde{M_\mathfrak{p}}$, by Proposition \ref{prop:coprimary},
$\Ass(i_*\mathcal F)=\{\mathfrak{p}\}$.
\end{proof}

\begin{prop}\label{prop: open embedding}
Let $X$ be a locally Noetherian scheme, $U$ be an open subset of $X$, $j:U\rightarrow X$ be the canonical open embedding, and $\mathcal F$ be a quasi-coherent sheaf on $U$. Then $\Ass(j_*\mathcal F)=\Ass(\mathcal F)$.
\end{prop}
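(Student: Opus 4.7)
The plan is to verify the two inclusions $\Ass(\mathcal F)\subseteq \Ass(j_*\mathcal F)$ and $\Ass(j_*\mathcal F)\subseteq \Ass(\mathcal F)$ separately. For the first, note that for any $x\in U$ the canonical map $(j_*\mathcal F)_x\to \mathcal F_x$ is an isomorphism of $\mathcal O_{X,x}=\mathcal O_{U,x}$-modules, because stalks of $j_*\mathcal F$ at points of the open set $U$ can be computed on arbitrarily small neighborhoods contained in $U$. Any stalk section witnessing $x\in \Ass(\mathcal F)$ therefore serves equally well as a witness for $x\in \Ass(j_*\mathcal F)$.

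For the reverse inclusion, I fix $x\in \Ass(j_*\mathcal F)$ and choose an affine Noetherian open neighborhood $V=\Spec A$ of $x$ in $X$; set $W=V\cap U$ and let $\mathfrak p_x\subset A$ be the prime corresponding to $x$. Since $j$ is quasi-compact and quasi-separated (as $X$ is locally Noetherian), $j_*\mathcal F$ is quasi-coherent, so $(j_*\mathcal F)|_V\cong \widetilde{\mathcal F(W)}$. By Proposition \ref{prop:ann}, there exists $s\in \mathcal F(W)$ with $\ann_A(s)=\mathfrak p_x$.

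The first task is to show $x\in W$. Suppose instead $x\in V\setminus W$, and let $J\subseteq A$ be the radical ideal cutting out $V\setminus W$, so that $\mathfrak p_x\supseteq J$. Since $V$ is Noetherian, $W$ is quasi-compact and admits a finite cover by principal opens $D(f_1),\ldots,D(f_k)$ with $f_i\in J$. Then each $f_i\in \mathfrak p_x=\ann_A(s)$, so $f_i s=0$ in $\mathcal F(W)$; restricting to $D(f_i)$ and using that $f_i$ acts invertibly on $\mathcal F(D(f_i))$, I conclude $s|_{D(f_i)}=0$ for every $i$, whence the sheaf axiom forces $s=0$, contradicting $\ann_A(s)=\mathfrak p_x\subsetneq A$.

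With $x\in U$ in hand, let $s_x\in \mathcal F_x$ be the stalk of $s$. The $A$-submodule $As\subseteq \mathcal F(W)$ is isomorphic to $A/\mathfrak p_x$, so localizing at $\mathfrak p_x$ produces the residue field $k(x)$, which injects into $\mathcal F(W)_{\mathfrak p_x}=\mathcal F_x$; hence $s_x\neq 0$. Meanwhile $\mathfrak p_x\cdot s=0$ yields $\mathfrak m_x\cdot s_x=0$, so $\ann_{\mathcal O_{X,x}}(s_x)\supseteq \mathfrak m_x$, and the maximality of $\mathfrak m_x$ together with $s_x\neq 0$ forces equality, giving $x\in \Ass(\mathcal F)$. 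The main obstacle is the first task---ruling out spurious associated points of $j_*\mathcal F$ on $X\setminus U$---and the Noetherian hypothesis enters crucially through both Proposition \ref{prop:ann} and the existence of a finite principal-open cover of $W$ by functions vanishing on $V\setminus W$.
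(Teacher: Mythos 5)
Your proof is correct. The paper itself gives no argument for this proposition---it simply cites EGA (Proposition 3.1.13 of \cite{G1965})---so your write-up is a genuinely self-contained proof, and it is essentially the standard one underlying that reference. The easy inclusion via the stalk isomorphism $(j_*\mathcal F)_x\cong\mathcal F_x$ for $x\in U$ is fine, and the real content is exactly where you placed it: excluding associated points of $j_*\mathcal F$ on $X\setminus U$. Your chain of justifications holds up: $j$ is quasi-compact because a quasi-compact open subscheme of a locally Noetherian scheme has Noetherian underlying space, so $j_*\mathcal F$ is quasi-coherent and $(j_*\mathcal F)|_V\cong\widetilde{\mathcal F(W)}$, which makes the appeal to Proposition \ref{prop:ann} legitimate (that proposition needs only $A$ Noetherian, not finite generation of the module); and the argument that a section $s\in\mathcal F(W)$ with $\ann_A(s)=\mathfrak p_x\supseteq J$ restricts to zero on each $D(f_i)$ with $f_i\in J$ (where $f_i$ acts invertibly) and hence vanishes by the sheaf axiom, contradicting $\ann_A(s)\neq A$, is precisely the mechanism that forces $\Ass(j_*\mathcal F)\subseteq U$. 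One small streamlining: once you have shown $x\in U$, you are already done by your first paragraph, since $(j_*\mathcal F)_x\cong\mathcal F_x$ transfers a witnessing stalk element with annihilator $\mathfrak m_x$ directly; the explicit passage from $s$ to $s_x$ via $As\cong A/\mathfrak p_x$ and exactness of localization is correct but redundant.
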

\begin{proof}
See \cite[Proposition 3.1.13]{G1965}.
\end{proof}
\subsection{Reminder on partially ordered sets}
\begin{defn}
Let $(S,<)$ be a partially ordered set. The relation $<$ is called well-founded if
for any non-empty subset $T\subseteq S$, there exists some $x\in T$ such that there does not exist some $y\in T$ such that $y<x$.
In this case, we say $x$ is a $<$-minimal element of $T$.
\end{defn}

The following Proposition \ref{prop: well-founded} is actually a special case of an exercise in \cite[p. 230]{D2014}.
\begin{prop}\label{prop: well-founded}
Let $(S,<)$ be a partially ordered set. Then $<$ is a well-founded relation if and only
if there are no infinite descending sequences.
\end{prop}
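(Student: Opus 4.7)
The plan is to prove both directions by contrapositive. For the forward direction, I will suppose that there exists an infinite descending sequence $x_1 > x_2 > x_3 > \cdots$ in $S$, and then exhibit the subset $T = \{x_n : n \in \mathbb{N}\}$ as a non-empty set with no $<$-minimal element. Indeed, given any $x_n \in T$, the element $x_{n+1} \in T$ satisfies $x_{n+1} < x_n$, which directly contradicts well-foundedness. This direction is essentially a one-line observation.

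For the converse, I will also argue by contrapositive: assume $<$ is not well-founded and construct an infinite descending sequence. By the failure of well-foundedness, there is a non-empty subset $T \subseteq S$ such that every element of $T$ fails to be $<$-minimal in $T$; that is, for each $x \in T$ there exists some $y \in T$ with $y < x$. I would then pick any $x_1 \in T$ and recursively choose $x_{n+1} \in T$ with $x_{n+1} < x_n$, obtaining the desired infinite descending sequence $x_1 > x_2 > x_3 > \cdots$.

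The main subtlety, and hence the step I would flag explicitly, is that the recursive choice above requires an appeal to the Axiom of Dependent Choice rather than merely the Axiom of Choice. The relevant set-valued map $F: T \to \mathcal{P}(T) \setminus \{\emptyset\}$ sending $x \mapsto \{y \in T : y < x\}$ has non-empty values by the assumption on $T$, and Dependent Choice then supplies a sequence $(x_n)_{n \geqslant 1}$ in $T$ with $x_{n+1} \in F(x_n)$ for every $n$. Since the paper has already signalled in the introduction that set-theoretic tools will be used, invoking Dependent Choice here is natural and completes the proof.
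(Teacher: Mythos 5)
Your proof is correct and follows essentially the same route as the paper: in one direction take the set of terms of a descending sequence as the witnessing subset, and in the other recursively pick smaller and smaller elements from a subset with no $<$-minimal element. Your explicit appeal to Dependent Choice is a careful refinement of the paper's informal ``repeat this process,'' but the underlying argument is the same.
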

\begin{proof}
$\Rightarrow$ Assume $\{x_n\}$ be a descending sequence of $S$. Consider the non-empty subset $T=\{x_1,x_2,\cdots,x_n,\cdots\}$ of $S$, there exists some $x_n$ such that $x_n$ is a $<$-minimal element of $T$. Therefore, $x_{n+1}\notin T$, which means $T$ has only $n$ elements. \\

$\Leftarrow$
Let $T$ be a non-empty subset of $S$. We assume $T$ does not have a $<$-minimal element. Let $x_1\in T$. Since $x_1$ is not a $<$-minimal element in $T$, there exists $x_2\in T$ such that $x_2<x_1$. Repeat this process. We will find an infinite sequence $x_1>x_2>\cdots >x_n>\cdots$,
which leads to a contradiction.
\end{proof}

\begin{eg}\label{Eg: key}
Let $X$ be a locally Noetherian scheme. We equip $X$ with a partial order. Let $\mathfrak{p},\mathfrak{q}\in X$, we say $\mathfrak{q}<\mathfrak{p}$ if $\overline{\{\mathfrak{p}\}}\subset \overline{\{\mathfrak{q}\}}$. Let $\mathcal F$ be a non-zero quasi-coherent sheaf on $X$, $(\Ass(\mathcal F),<)$ be a partially ordered set, where the partially order $<$ is the restriction of the partially order of $X$ on $\Ass(\mathcal{F})$. Let $\mathfrak{p}_1>\mathfrak{p}_2>\cdots>\mathfrak{p}_n>\cdots$ be a descending sequence of $\Ass(\mathcal F)$, and $\mathfrak{p}_1\in \Spec(A)$, where $A$ is a Noetherian ring. Note that for any $n$, $\mathfrak{p}_1\in \overline{\{\mathfrak{p}_n\}}$, we have $\mathfrak{p}_n\in \Spec (A)$. Since $A$ is a Noetherian ring, $\mathfrak{p}_1$ is a finitely generated ideal of $A$ which is generated by $m$ elements.  By Krull's height theorem \cite[Theorem 10.2]{E1995},  the sequence $x_1>x_2>\cdots>x_n>\cdots$ has at most $m+1$ elememts. By Proposition \ref{prop: well-founded}, $<$ is a well-founded relation on $\Ass(\mathcal F)$.
\end{eg}

Since $<$ is well-founded on $\Ass(\mathcal F)$, we could define rank function (\cite[p.68]{J2003}),
\[\rk:\Ass(\mathcal F)\rightarrow \Ord.\] It is defined as follows,
\[\rk(x)=\sup\{\rk(y)+1:y<x\}.\]
By definition, we can see that if $x$ is a $<$-minimal element of $\Ass(\mathcal{F})$, then $\rk(x)=0$. Moreover,
if $x<y$, then $\rk(x)<\rk(y)$.\\

The following Proposition is a variation of Szpilrajn's Theorem. We only give an outline of the proof and recommend the readers to refer \cite{BP1982} for more details.

\begin{prop}
Any well-founded partially ordered set can be extend to a well-ordered set.
\end{prop}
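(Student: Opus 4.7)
The plan is to build the well-ordering extension by stratifying $S$ via the rank function $\rk\colon S\to \Ord$ defined just before the statement, and then using the well-ordering theorem within each level set.

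First I would collect the levels. For each ordinal $\alpha$, let $S_\alpha=\{x\in S:\rk(x)=\alpha\}$; these form a partition of $S$. By the well-ordering theorem, for every $\alpha$ in the image of $\rk$, choose a well-order $<_\alpha$ on $S_\alpha$. (This is the single appeal to the axiom of choice, and corresponds to the Szpilrajn-style step.) Now define a relation $\prec$ on $S$ by
\[
x\prec y\ \Longleftrightarrow\ \rk(x)<\rk(y)\ \text{ or }\ \bigl(\rk(x)=\rk(y)=\alpha\ \text{and}\ x<_\alpha y\bigr).
\]
Trichotomy, irreflexivity and transitivity are straightforward from the corresponding properties of the ordinal order and each $<_\alpha$, so $\prec$ is a strict total order.

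Next I would check that $\prec$ extends the original partial order $<$. If $x<y$ in $S$, then by the recursive definition of rank one has $\rk(x)<\rk(y)$, whence $x\prec y$ immediately. So every $<$-comparison is respected.

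Finally I would verify that $\prec$ is a well-ordering. Given any non-empty $T\subseteq S$, the image $\rk(T)\subseteq\Ord$ is a non-empty class of ordinals and therefore has a least element $\alpha_0$. The set $T\cap S_{\alpha_0}$ is non-empty and well-ordered by $<_{\alpha_0}$, so it has a least element $x_0$. By construction of $\prec$, this $x_0$ is the $\prec$-minimum of $T$. Hence $(S,\prec)$ is a well-ordered extension of $(S,<)$.

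The only real obstacle is the invocation of the axiom of choice to well-order each level set $S_\alpha$ simultaneously; everything else is a routine verification once the rank stratification is in place. One could streamline the exposition by directly quoting the well-ordering theorem on $S_\alpha$ (or on a set of representatives) rather than reproving it, and referring to \cite{BP1982} for the finer Szpilrajn-style constructions.
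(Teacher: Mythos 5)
Your proposal is correct and follows essentially the same route as the paper's own proof: stratify $S$ by the rank function, well-order each level set $S_\alpha$ via the well-ordering theorem, and combine these lexicographically with the ordinal order on ranks. You merely spell out the verifications (totality, extension via $x<y\Rightarrow\rk(x)<\rk(y)$, and well-foundedness by taking the least rank and then the $<_{\alpha_0}$-least element) that the paper's outline leaves implicit.
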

\begin{proof}
Let $(S,<)$ be a well-founded partially ordered set. Let
 \[S_\alpha=\{x\in S: \rk(x)=\alpha\},\]
where $\alpha\in \Ord$. By the well-ordering theorem, there exists a well-order $<_\alpha$ on $S_\alpha$. Let $x,y\in S$, we say $x<' y$ if one of the following holds,
\begin{enumerate}
\item $\rk (x)<\rk (y)$,
\item  there exists some $\alpha\in \Ord$, such that $x<_\alpha y$.
\end{enumerate}
Therefore, $(S,<')$ is a linear extension of $(S,<)$ and it is well-ordered.
\end{proof}

\begin{defn}
Let $(S, <)$ be a well-ordered set. The element $s$ of $S$ is called a limit point of $S$
if there is some element $x$ of $S$ such that $x<s$, and for every element $x\in S$ with $x<s$, there is some $y\in S$ such that both $x<y$ and $y<s$ hold. Equivalently, $s$ is a limit point of $S$ under the order topology.
\end{defn}

\section{Coprimary filtrations}
\begin{lemma}\label{lemma:calculation}
Let $X$ be a locally Noetherian scheme, $\mathcal F$ be a non-zero quasi-coherent sheaf on $X$, $\mathfrak{p}\in \Ass(\mathcal F)$ and $\rk(\mathfrak{p})=0$. Let $i:\Spec(\mathcal O_{X,\mathfrak{p}})\rightarrow X$ be the canonical morphism. Then $\Ass(i_{*}i^{*}\mathcal F)=\{\mathfrak{p}\}$.
\end{lemma}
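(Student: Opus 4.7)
The plan is to reduce to the affine case and then translate the hypothesis $\rk(\mathfrak{p})=0$ into the statement that no other associated point of $\mathcal{F}$ generalizes $\mathfrak{p}$. First I would pick an affine open neighborhood $U=\Spec A$ of $\mathfrak{p}$ in $X$, with $A$ Noetherian and $\mathcal{F}|_U=\widetilde{M}$ for some $A$-module $M$. Writing $j:U\hookrightarrow X$ for the inclusion and $i':\Spec A_{\mathfrak{p}}\to U$ for the canonical morphism from the local scheme, the map $i$ factors as $j\circ i'$. Since $j_*$ commutes with pullback on opens and preserves associated points by Proposition \ref{prop: open embedding}, one has $\Ass(i_*i^*\mathcal{F})=\Ass(i'_*i'^*\widetilde{M})$, reducing the problem to the affine situation.

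Next I would compute $\Ass(i'^*\widetilde{M})$ on $\Spec A_{\mathfrak{p}}$. By Proposition \ref{prop:pull-back}, a prime $\mathfrak{q}\subseteq\mathfrak{p}$ lies in $\Ass(\widetilde{M_{\mathfrak{p}}})$ if and only if $\mathfrak{q}\in\Ass(\widetilde{M})$. Here is where the rank hypothesis enters: $\rk(\mathfrak{p})=0$ means no $\mathfrak{q}\in\Ass(\mathcal{F})$ satisfies $\mathfrak{q}<\mathfrak{p}$ in the order of Example \ref{Eg: key}, i.e.\ no $\mathfrak{q}\in\Ass(\mathcal{F})$ satisfies $\overline{\{\mathfrak{p}\}}\subsetneq \overline{\{\mathfrak{q}\}}$, which (since both points lie in the affine $U$) is equivalent to the condition that no associated prime $\mathfrak{q}$ of $\widetilde M$ is strictly contained in $\mathfrak{p}$. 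Because $\mathfrak{p}\in\Ass(\mathcal F)$ by assumption, this forces $\Ass(\widetilde{M_{\mathfrak{p}}})=\{\mathfrak{p} A_{\mathfrak{p}}\}$.

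Finally I would invoke Proposition \ref{prop: ass localization} to conclude $\Ass(i'_*\widetilde{M_{\mathfrak{p}}})=\{\mathfrak{p}\}$ as a subset of $U$, and combine with the open-embedding reduction from the first paragraph to obtain $\Ass(i_*i^*\mathcal{F})=\{\mathfrak{p}\}$ in $X$. The only subtle point — and the one place where one has to be a bit careful — is the translation of $\rk(\mathfrak{p})=0$ into a statement about prime containments; once that dictionary is in place the argument is a direct application of Propositions \ref{prop:pull-back}, \ref{prop: ass localization}, and \ref{prop: open embedding}. I do not expect any serious obstacle beyond being precise about how the partial order on $X$ restricts to the subset $\Spec A_{\mathfrak{p}}$ and how the rank-zero condition rules out all competing associated primes.
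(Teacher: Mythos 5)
Your proof is correct, and it follows the same outer strategy as the paper's — restrict to an affine neighborhood $U=\Spec(A)$ of $\mathfrak{p}$, identify $i_*i^*\mathcal F$ with the pushforward of $\widetilde{M_{\mathfrak{p}}}$ along $j:U\rightarrow X$, and finish with Propositions \ref{prop: ass localization} and \ref{prop: open embedding} — but the central computation is done by a different route. You get $\Ass(\widetilde{M_{\mathfrak{p}}})=\{\mathfrak{p}A_{\mathfrak{p}}\}$ directly from Proposition \ref{prop:pull-back}: the associated primes of $M_{\mathfrak{p}}$ are exactly the associated primes of $M$ contained in $\mathfrak{p}$, and $\rk(\mathfrak{p})=0$ (no $\mathfrak{q}\in\Ass(\mathcal F)$ with $\overline{\{\mathfrak{p}\}}\subsetneq\overline{\{\mathfrak{q}\}}$, i.e.\ no associated prime strictly contained in $\mathfrak{p}$) excludes everything except $\mathfrak{p}$ itself, which is present because $\mathfrak{p}\in\Ass(\mathcal F)$. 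The paper instead proves the stronger statement that $M_{\mathfrak{q}}=0$ for every $\mathfrak{q}\subsetneq\mathfrak{p}$: if some such $\mathfrak{q}$ were in $\Supp(\widetilde M)$, a $<$-minimal point of the support below it would lie in $\Ass(\mathcal F)$ by Proposition \ref{prop:min} and be strictly smaller than $\mathfrak{p}$, contradicting $\rk(\mathfrak{p})=0$; then $\Ass(i^*\mathcal F)=\{\mathfrak{p}_{\mathfrak{p}}\}$ follows from $\Ass\subseteq\Supp$ together with nonemptiness of $\Ass$. Your version is a bit shorter and avoids the support/minimality argument entirely, at the cost of not recording the (here unneeded) fact that $i^*\mathcal F$ is supported only at the closed point of $\Spec(\mathcal O_{X,\mathfrak{p}})$; your dictionary between $\rk(\mathfrak{p})=0$ and prime containments inside $U$ is exactly the right one for the order of Example \ref{Eg: key}, so there is no gap.
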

\begin{proof}
Let $U$ be an affine open neighborhood of $\mathfrak{p}$. We denote $U$ by $\Spec(A)$, where $A$ is a Noetherian ring. Let  $j: U\rightarrow X$ be the canonical open embedding. $\mathcal F|_U\cong \widetilde{M}$, where $M$ is an $A$-module. We can see that $i^*\mathcal F\cong \widetilde{M_\mathfrak{p}}$. Let $\mathfrak{q}_\mathfrak{p}\in \Spec(A_\mathfrak{p})$, we claim that $M_\mathfrak{q}=0$ if $\mathfrak{q}\neq \mathfrak{p}$. Otherwise, $\mathfrak{q}\in \Supp(\widetilde{M})$. If such $\mathfrak{q}$ exists, we may assume $\mathfrak{q}$ be a $<$-minimal element in $\Supp(\widetilde{M})$. By Proposition \ref{prop:min}, $\mathfrak{q}\in \Ass(\widetilde{M})$, thus $\mathfrak{q}\in \Ass(\mathcal F)$, which contradicts to $\rk(\mathfrak{p})=0$. Therefore, $\mathfrak{q}_\mathfrak{p}\notin \Supp(i^*\mathcal F)$ if $\mathfrak{q}\neq \mathfrak{p}$. Thus, $\Ass(i^*\mathcal F)=\{\mathfrak{p}_\mathfrak{p}\}$ since $M_\mathfrak{p}\neq 0$. By Proposition \ref{prop: ass localization}, $\Ass_U(\widetilde{M_\mathfrak{p}})=\{\mathfrak{p}\}$. Observe that $j_*\widetilde{M_\mathfrak{p}}\cong i_*i^*\mathcal F$, by Proposition \ref{prop: open embedding}, $\Ass(i_{*}i^{*}\mathcal F)=\{\mathfrak{p}\}$.
\end{proof}

\begin{lemma}\label{Lemma:key}
Let $X$ be a locally Noetherian scheme, $\mathcal F$ be a non-zero quasi-coherent sheaf on $X$, $\mathfrak{p}\in \Ass(\mathcal F)$ and $\rk(\mathfrak{p})=0$. Let $i:\Spec(\mathcal O_{X,\mathfrak{p}})\rightarrow X$ be the canonical morphism, $\mathcal G$ be a quasi-coherent subsheaf of $\mathcal F$. Then $\mathcal G=\ker(\mathcal F\rightarrow i_{*}i^* \mathcal F)$ if and only if both $\Ass(\mathcal F/\mathcal G)=\{\mathfrak{p}\}$ and
$\Ass(\mathcal G)=\Ass(\mathcal F)\backslash\{\mathfrak{p}\}$ hold.
\end{lemma}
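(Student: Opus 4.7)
The central ingredient I will rely on is a stalk computation for the unit map $\eta\colon \mathcal F \to i_*i^*\mathcal F$ at the point $\mathfrak{p}$. Working in an affine open $U = \Spec A$ containing $\mathfrak{p}$ with $\mathcal F|_U = \widetilde M$, the closed point $\mathfrak{p}_\mathfrak{p}$ of $\Spec A_\mathfrak{p}$ has the whole spectrum as its only open neighborhood, so for every sufficiently small open $V \ni \mathfrak{p}$ the preimage $i^{-1}(V)$ is all of $\Spec A_\mathfrak{p}$; taking the colimit yields $(i_*i^*\mathcal F)_\mathfrak{p} = M_\mathfrak{p} = \mathcal F_\mathfrak{p}$, with $\eta$ restricting to the identity on this stalk. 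Writing $\mathcal K := \ker \eta$, this immediately gives $\mathcal K_\mathfrak{p} = 0$, which is the workhorse for both directions.

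For the forward direction ($\Rightarrow$), I assume $\mathcal G = \mathcal K$. The induced injection $\mathcal F/\mathcal G \hookrightarrow i_*i^*\mathcal F$ combined with Lemma \ref{lemma:calculation} gives $\Ass(\mathcal F/\mathcal G) \subseteq \{\mathfrak{p}\}$, and $\mathcal G_\mathfrak{p} = \mathcal K_\mathfrak{p} = 0$ rules out $\mathfrak{p}$ as an associated point of $\mathcal G$ (by the stalk characterization in the definition of $\Ass$). Plugging these two facts into the short exact sequence inequality $\Ass(\mathcal G) \subseteq \Ass(\mathcal F) \subseteq \Ass(\mathcal G) \cup \Ass(\mathcal F/\mathcal G)$ then forces $\Ass(\mathcal G) = \Ass(\mathcal F) \setminus \{\mathfrak{p}\}$, and since $\mathfrak{p} \in \Ass(\mathcal F) \setminus \Ass(\mathcal G)$ lands in $\Ass(\mathcal F/\mathcal G)$, we conclude $\Ass(\mathcal F/\mathcal G) = \{\mathfrak{p}\}$.

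For the backward direction ($\Leftarrow$), I will prove $\mathcal G \subseteq \mathcal K$ and $\mathcal K \subseteq \mathcal G$ separately, each time showing the relevant intermediate sheaf has empty $\Ass$ and then invoking the vanishing criterion $\Ass(\mathcal H) = \emptyset \Leftrightarrow \mathcal H = 0$. Let $\mathcal I$ denote the image of the composite $\mathcal G \hookrightarrow \mathcal F \to i_*i^*\mathcal F$. As a subsheaf of $i_*i^*\mathcal F$ we have $\Ass(\mathcal I) \subseteq \{\mathfrak{p}\}$; on the stalk at $\mathfrak{p}$ the map is the identity, so $\mathcal I_\mathfrak{p} = \mathcal G_\mathfrak{p}$, and if $\mathfrak{p} \in \Ass(\mathcal I)$ then the witnessing section lies in $\mathcal G_\mathfrak{p}$ with annihilator $\mathfrak{m}_\mathfrak{p}$, forcing $\mathfrak{p} \in \Ass(\mathcal G)$ and contradicting the hypothesis. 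Hence $\mathcal I = 0$, giving $\mathcal G \subseteq \mathcal K$. Then $\mathcal K/\mathcal G$ embeds in $\mathcal F/\mathcal G$, so $\Ass(\mathcal K/\mathcal G) \subseteq \{\mathfrak{p}\}$; but $(\mathcal K/\mathcal G)_\mathfrak{p} = \mathcal K_\mathfrak{p}/\mathcal G_\mathfrak{p} = 0$ rules $\mathfrak{p}$ out as well, so $\mathcal K = \mathcal G$. The only genuinely nontrivial step is the stalk identity $(i_*i^*\mathcal F)_\mathfrak{p} = \mathcal F_\mathfrak{p}$ with $\eta$ restricting to the identity; once that is in hand, everything else reduces to the associated-point inequalities for short exact sequences and the vanishing criterion already assembled in Section 2.
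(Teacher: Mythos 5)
Your proof is correct, and the forward direction is essentially the paper's argument: the stalk identity $(i_*i^*\mathcal F)_{\mathfrak p}\cong\mathcal F_{\mathfrak p}$ with the unit map becoming the identity, Lemma \ref{lemma:calculation} to bound $\Ass(\mathcal F/\mathcal G)$, and the short-exact-sequence inclusions to pin down $\Ass(\mathcal G)$ (you get nonemptiness of $\Ass(\mathcal F/\mathcal G)$ from $\mathfrak p\in\Ass(\mathcal F)\setminus\Ass(\mathcal G)$, the paper from $\mathcal F/\mathcal G\neq 0$; same substance). The converse is where you genuinely diverge. The paper first proves $\mathcal G_{\mathfrak p}=0$ by a second, direct use of the minimality of $\mathfrak p$ (if $\mathcal G_{\mathfrak p}\neq 0$ then some $\mathfrak q\in\Ass(\mathcal G)$ specializes to $\mathfrak p$), deduces $i_*i^*\mathcal G=0$, and kills the composite $\mathcal G\to i_*i^*\mathcal F$ via the naturality square; you instead look at the image $\mathcal I$ of that composite inside $i_*i^*\mathcal F$, bound $\Ass(\mathcal I)\subseteq\{\mathfrak p\}$ by Lemma \ref{lemma:calculation}, exclude $\mathfrak p$ via $\mathcal I_{\mathfrak p}\cong\mathcal G_{\mathfrak p}$ and the hypothesis $\mathfrak p\notin\Ass(\mathcal G)$, and conclude $\mathcal I=0$ from the criterion $\Ass=\emptyset\Rightarrow 0$. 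This buys a small economy: the only place minimality of $\mathfrak p$ enters is through Lemma \ref{lemma:calculation}, and you avoid the auxiliary fact that a point of $\Supp(\mathcal G)$ lies in the closure of some associated point (which for non-finitely-generated modules requires the localization argument of Proposition \ref{prop:pull-back}); the price is that you must note explicitly that $i_*i^*\mathcal F$, hence the image $\mathcal I$ and the quotient $\ker(\eta)/\mathcal G$, are quasi-coherent (as $i$ is quasi-compact and quasi-separated and images/quotients of quasi-coherent sheaves are quasi-coherent), so that the vanishing criterion of Proposition 2.5 applies, and that $\mathcal I_{\mathfrak p}$ is the image of the stalk map because taking stalks is exact. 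The final step ($\ker(\eta)/\mathcal G$ has $\Ass\subseteq\{\mathfrak p\}$ but vanishing stalk at $\mathfrak p$) matches the paper's conclusion, which instead argues by contradiction through $\Supp$ and reuses $\mathcal G'_{\mathfrak p}=0$ from the forward part.
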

\begin{proof}
Let $U$ be an affine open neighborhood of $\mathfrak{p}$. We denote $U$ by $\Spec(A)$, where $A$ is a Noetherian ring. Let $j: U\rightarrow X$ be the canonical open embedding. $\mathcal F|_U\cong \widetilde{M}$, where $M$ is an $A$-module. We can see that $i_{*}i^* \mathcal F|_U\cong\widetilde{M_\mathfrak{p}}$. Hence, $(i_{*}i^* \mathcal F)_\mathfrak{p}\cong \mathcal F_\mathfrak{p}$.
Since the sequence
\[\xymatrix{0\ar[r]& \mathcal G\ar[r]&\mathcal F\ar[r]& i_{*}i^* \mathcal F}\]
is exact, we have an exact sequence,
\[\xymatrix{0\ar[r]& \mathcal G_\mathfrak{p}\ar[r]&\mathcal F_\mathfrak{p}\ar[r]& \mathcal F_\mathfrak{p},}\]
when we take the stalk at $\mathfrak{p}$.
Thus, $\mathcal G_\mathfrak{p}=0$, which means $\mathfrak{p}\notin \Supp(\mathcal G)$, so $\mathfrak{p}\notin \Ass(\mathcal G)$.
We can also see that $\mathcal F/\mathcal G\neq 0$.
By Lemma \ref{lemma:calculation}, $\Ass(i_{*}i^* \mathcal F)=\{\mathfrak{p}\}$. Because both $\Ass(\mathcal F/\mathcal G)\subseteq \Ass(i_{*}i^* \mathcal F)$ and  $\mathcal F/\mathcal G\neq 0$ hold,
$\Ass(\mathcal F/\mathcal G)=\{\mathfrak{p}\}$.
Notice that $\Ass(\mathcal F)\subseteq \Ass(\mathcal G)\cup \Ass(\mathcal F/\mathcal G)$, $\Ass( \mathcal F)\backslash\{\mathfrak{p}\}\subseteq \Ass(\mathcal G)$. Since both $\mathfrak{p}\notin \Ass(\mathcal G)$ and
 $\Ass(\mathcal G)\subseteq \Ass(\mathcal F)$ hold,
\[\Ass(\mathcal F)\backslash\{\mathfrak{p}\}=\Ass(\mathcal G).\]

Now, we prove the converse. Assume $\mathcal G_\mathfrak{p}\neq 0$, so $\mathfrak{p}\in \Supp(\mathcal G)$. Hence there exists some $\mathfrak{q}\in \Ass(\mathcal G)$ such that $\mathfrak{p}\in \overline{\{\mathfrak{q}\}}$. However, since $\mathfrak{p}$ is minimal in $\Ass(\mathcal F)$, for any $\mathfrak{q}\in \Ass(\mathcal F)$ such that $\mathfrak{q}\neq \mathfrak{p}$, $\mathfrak{p}\notin \overline{\{\mathfrak{q}\}}$. This leads to a contradiction because $\Ass(\mathcal G)=\Ass(\mathcal F)\backslash\{\mathfrak{p}\}$. Note that $i_*i^*\mathcal G=i_*\widetilde{\mathcal G_\mathfrak{p}}=0$
since $\mathcal G_\mathfrak{p}=0$. Consider the following commutative diagram.
\[\xymatrix{\mathcal G\ar[r]^g\ar[d]&\mathcal F\ar[d]^h\\
i_*i^*\mathcal G\ar[r]&i_*i^*\mathcal F}\]
Since $i_*i^*\mathcal G=0$, the morphism $h\circ g: \mathcal G\rightarrow i_*i^*\mathcal F$ is zero. Let $\mathcal G'=\ker (\mathcal F\rightarrow i_*i^*\mathcal F)$. We have $\mathcal G\subseteq \mathcal G'$ because $h\circ g=0$. Consider the short exact sequence $0\rightarrow \mathcal G'/\mathcal G\rightarrow \mathcal F/\mathcal G\rightarrow \mathcal F/\mathcal G'\rightarrow 0$. We can see that
$\Ass(\mathcal G'/\mathcal G)\subseteq \Ass(\mathcal F/\mathcal G)=\{\mathfrak{p}\}$. Assume $\mathcal G\neq \mathcal G'$, thus $\Ass(\mathcal G'/\mathcal G)=\{\mathfrak{p}\}$, so
\[\mathfrak{p}\in \Ass(\mathcal G'/\mathcal G)\subseteq \Supp(\mathcal G'/\mathcal G)\subseteq \Supp(\mathcal G').\]
Note that  the definition of $\mathcal G'$ here is identical to the definition of $\mathcal G$ in the `only if' part of the proof, we have $\mathcal G'_\mathfrak{p}=0$, which leads to a contradiction. Therefore, $\mathcal G=\mathcal G'$.

\end{proof}

\begin{defn}
Let $X$ be a scheme, $\mathcal F$ be an $\mathcal{O}_X$-module and $\{\mathcal F_i, i\in I\}$ be a family of $\mathcal{O}_X$-submodules of $\mathcal F$. The intersection $\bigcap\limits_{i\in I} \mathcal{F}_i$ of the family $\{\mathcal{F}_i\}$ is the $\mathcal{O}_X$-submodule of $\mathcal F$ defined as the kernel
of the canonical homomorphism
\[\mathcal F \rightarrow \prod\limits_{i\in I}(\mathcal F/\mathcal F_i).\]

\end{defn}

\begin{thm}\label{Thm: coprimary filtration}
Let $X$ be a locally Noetherian scheme, $\mathcal F$ be a non-zero quasi-coherent sheaf on $X$. Let $(\Ass(\mathcal F),<)$ be a well-ordering extension of $\Ass(\mathcal F)$ in Example \ref{Eg: key}. Suppose one of the following holds:
\begin{enumerate}
\item \label{1}$(\Ass(\mathcal F),<)$ is isomorphic to the least nonzero limit ordinal $\omega$, and for any $p\in X$, there exists only finitely many $\mathfrak{q}\in \Ass(\mathcal F)$ such that $\mathfrak{p}\in\overline{\{\mathfrak{q}\}}$.
\item \label{2}For any affine open subscheme $U$ of $X$, $Ass(\mathcal F|_U)$ is a finite set.
\end{enumerate}

Then there exists a filtration of quasi-coherent sheaves on $X$, $(\mathcal F^t)_{t\in \Ass(\mathcal F)}$, satisfying the following properties,
\begin{enumerate}[label=\rm(\alph*)]
\item $\mathcal F^r\supset \mathcal F^s$ if $r<s$,
\item $\mathcal F^0=\mathcal F$, where $0$ is the minimal element of $\Ass(\mathcal F)$,
\item $\Ass(\mathcal F^t/\mathcal F^{t+1})=\{t\}$,
\item If $t$ is a limit point of $(\Ass(\mathcal F),<)$, then $\bigcap\limits_{r<t} \mathcal F^r$ is a quasi-coherent sheaf and
 \[\mathcal F^t=\bigcap_{r<t} \mathcal F^r.\]
\item If $(\Ass(\mathcal F),<)$ is isomorphic to a limit ordinal, then
\[\bigcap \mathcal F^r=0.\]
      If $(\Ass(\mathcal F),<)$ is isomorphic to a successor ordinal, then $\mathcal F^t$ is coprimary where $t$ is the maximal element of $\Ass(\mathcal F)$.
\end{enumerate}
If assumption \ref{2} holds, such filtration is unique.
Moreover, we also have
\begin{enumerate}[label=\rm(\alph*)]
\item
\[\mathcal F^{t+1}=\ker(\mathcal F^t\rightarrow i_{*}i^*\mathcal F^t),\]
where $i:\Spec \mathcal O_{X,t}\rightarrow X$ is the canonical morphism.
\item $\Ass(\mathcal F^t)=\{r: r\geqslant t\}$.

\end{enumerate}
\end{thm}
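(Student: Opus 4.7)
The plan is to construct the filtration $(\mathcal F^t)_{t\in\Ass(\mathcal F)}$ by transfinite recursion on the well-ordered indexing set, and to verify properties (a)-(e) together with the two ``Moreover'' clauses by a simultaneous transfinite induction whose main hypothesis will be $\Ass(\mathcal F^t)=\{r\in\Ass(\mathcal F):r\geqslant t\}$. Set $\mathcal F^0:=\mathcal F$; at a successor $t=s+1$, set $\mathcal F^{s+1}:=\ker(\mathcal F^s\to i_*i^*\mathcal F^s)$ where $i:\Spec\mathcal O_{X,s}\to X$ is the canonical morphism attached to $s$; at a limit ordinal $t$, set $\mathcal F^t:=\bigcap_{r<t}\mathcal F^r$. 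At a successor step, the inductive hypothesis gives that $s$ is the minimum of $\Ass(\mathcal F^s)$ under $<$; since $<$ is a linear extension of the partial order from Example~\ref{Eg: key}, this minimum is $<$-minimal in the partial order as well, so $\rk(s)=0$ in $\Ass(\mathcal F^s)$ and Lemma~\ref{Lemma:key} applies. Its forward implication then yields $\Ass(\mathcal F^s/\mathcal F^{s+1})=\{s\}$ and $\Ass(\mathcal F^{s+1})=\Ass(\mathcal F^s)\setminus\{s\}=\{r\geqslant s+1\}$, advancing the induction and establishing (c) together with the Moreover clause at this stage.

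The hardest part will be the limit step, where the finiteness hypothesis really enters. Under assumption~(2), I plan to argue locally: on any affine open $U=\Spec A$, the identity $\Ass(\mathcal F^r|_U)=\Ass(\mathcal F^r)\cap U$ combined with the fact that $\mathcal F^r/\mathcal F^{r+1}$ has support $\overline{\{r\}}$ (which meets $U$ if and only if $r\in U$) shows that the chain $\{\mathcal F^r|_U\}_{r<t}$ can strictly decrease only at indices $r\in\Ass(\mathcal F|_U)$. Since this set is finite by hypothesis, the chain stabilizes at some $\mathcal F^{r_0}|_U$ for $r_0<t$ lying past every element of $\Ass(\mathcal F|_U)\cap\{r<t\}$, so $\bigl(\bigcap_{r<t}\mathcal F^r\bigr)|_U=\mathcal F^{r_0}|_U$ is quasi-coherent with associated points $\Ass(\mathcal F|_U)\cap\{r\geqslant t\}$. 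Gluing will give the quasi-coherence of $\mathcal F^t$, property (d), and the inductive step for $\Ass(\mathcal F^t)$. Property (e) then drops out of the same local picture: in the successor case, at the maximum $T$ we have $\Ass(\mathcal F^T)=\{T\}$ hence coprimarity, and in the limit case one picks $r_0$ past the finite set $\Ass(\mathcal F|_U)$ on each affine $U$, forcing $\mathcal F^{r_0}|_U=0$ and thus $\bigcap\mathcal F^r=0$. Under assumption~(1) there are no intermediate limit steps, so only the vanishing $\bigcap\mathcal F^{p_n}=0$ is nontrivial; I would verify it stalk-by-stalk, using the hypothesis that each $\mathfrak p\in X$ has only finitely many $q\in\Ass(\mathcal F)$ with $\mathfrak p\in\overline{\{q\}}$, which forces $(\mathcal F^{p_n})_{\mathfrak p}=0$ for $n$ large.

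For uniqueness under assumption~(2), I plan to run a second transfinite induction comparing any candidate $(\mathcal F'^t)$ satisfying (a)-(e) with the constructed filtration; limit steps are immediate from (d), so the crux is the successor step. Once $\mathcal F'^s=\mathcal F^s$ is known, it suffices to prove $s\notin\Ass(\mathcal F'^{s+1})$: granting this, $i_*i^*\mathcal F'^{s+1}=0$, the canonical map $\mathcal F^s/\mathcal F'^{s+1}\to i_*i^*(\mathcal F^s/\mathcal F'^{s+1})$ is injective because $\mathcal F^s/\mathcal F'^{s+1}$ is $s$-coprimary, and a diagram chase using exactness of $i_*i^*$ on the short exact sequence $0\to\mathcal F'^{s+1}\to\mathcal F^s\to\mathcal F^s/\mathcal F'^{s+1}\to 0$ identifies $\mathcal F'^{s+1}$ with $\ker(\mathcal F^s\to i_*i^*\mathcal F^s)=\mathcal F^{s+1}$. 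To prove $s\notin\Ass(\mathcal F'^{s+1})$, I argue by contradiction: if $s\in\Ass(\mathcal F'^{s+1})$, the exact sequences $0\to\mathcal F'^{r+1}\to\mathcal F'^r\to\mathcal F'^r/\mathcal F'^{r+1}\to 0$ whose quotients have $\Ass=\{r\}\neq\{s\}$ for $r>s$ force $s\in\Ass(\mathcal F'^r)$ at every successor, and the local stabilization argument above (which applies to any descending chain of quasi-coherent subsheaves of $\mathcal F$ under (2)) propagates this to limit indices. Then $s$ lies in $\Ass(\mathcal F'^r)$ for every $r$, contradicting property~(e): in the successor case $\Ass(\mathcal F'^T)=\{T\}\not\ni s$, and in the limit case $\bigcap\mathcal F'^r=0$ has empty $\Ass$.
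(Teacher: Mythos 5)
Your construction and existence arguments follow the paper's proof essentially verbatim: transfinite recursion, Lemma \ref{Lemma:key} at successor steps (your observation that the $<$-minimum of $\Ass(\mathcal F^s)$ has rank $0$ is exactly what is needed), local stabilization on affines under assumption (2) at limit points, and the stalkwise argument under assumption (1). One small slip there: the claim that $\Supp(\mathcal F^r/\mathcal F^{r+1})\subseteq\overline{\{r\}}$ ``meets $U$ if and only if $r\in U$'' is not a fact about closures ($\overline{\{r\}}\cap U\neq\emptyset$ does not force $r\in U$); the correct justification, and the paper's, is that if $(\mathcal F^r/\mathcal F^{r+1})|_U\neq 0$ it has an associated point lying in $\Ass(\mathcal F^r/\mathcal F^{r+1})\cap U=\{r\}\cap U$. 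The real divergence, and the genuine gap, is in your uniqueness argument.

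In the limit-ordinal case you derive the contradiction from ``$s\in\Ass(\mathcal F'^r)$ for every $r$'' versus ``$\bigcap\mathcal F'^r=0$ has empty $\Ass$.'' This is a non sequitur: associated points do not pass to intersections of descending chains, which is precisely the phenomenon of the paper's $\mathbb Z$-module example (every term of that chain has $(0)$ as an associated prime, yet the intersection vanishes), so the two statements are not in conflict by themselves. The fix is one further application of your stabilization argument to the whole chain on an affine $U\ni s$: choose $r_0\in\Ass(\mathcal F)$ past the finite set $\Ass(\mathcal F|_U)$ (possible since the order type is a limit ordinal), deduce via (c) and (d) that $\mathcal F'^{r}|_U=\mathcal F'^{r_0}|_U$ for all $r\geqslant r_0$, hence $(\bigcap\mathcal F'^r)|_U=\mathcal F'^{r_0}|_U$, which property (e) forces to be $0$, contradicting $s\in\Ass(\mathcal F'^{r_0}|_U)$; this localization near $s$ is exactly how the paper's Step 4 proceeds. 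Relatedly, your parenthetical that stabilization applies to ``any descending chain of quasi-coherent subsheaves'' under (2) is false (consider $2^n\mathbb Z\subset\mathbb Z$ over $\Spec\mathbb Z$); it applies to the candidate filtration only because of (c) and (d). Two further, more easily repaired, jumps: ``$s\notin\Ass(\mathcal F'^{s+1})\Rightarrow i_*i^*\mathcal F'^{s+1}=0$'' needs the minimality of $s$ in $\Ass(\mathcal F^s)\supseteq\Ass(\mathcal F'^{s+1})$ plus the fact that every point of the support specializes from an associated point — but you need not redo this or the diagram chase at all, since once $s\notin\Ass(\mathcal F'^{s+1})$ is known you have $\Ass(\mathcal F'^{s+1})=\Ass(\mathcal F'^s)\setminus\{s\}$ and $\Ass(\mathcal F'^s/\mathcal F'^{s+1})=\{s\}$, and the converse direction of Lemma \ref{Lemma:key} gives $\mathcal F'^{s+1}=\ker(\mathcal F^s\to i_*i^*\mathcal F^s)=\mathcal F^{s+1}$ directly; and in the successor-ordinal case ``$\Ass(\mathcal F'^T)=\{T\}$'' is not the literal content of (e) (which only says coprimary) and should be obtained by running your propagation argument on $T$ as well — though the paper reads (e) the same way at the corresponding point of its Step 4.
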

\begin{proof}
Step 1: Construction of $\mathcal F^t$.\\
We define $\mathcal F^t$ by transfinite recursion,
\begin{enumerate}[label=\rm(\alph*)]
\item \label{a}$\mathcal F^0=\mathcal F$,
\item \label{b} If $t$ is a successor of an element $s$ (i.e. $t=s+1$), then
$\mathcal F^{t}=\ker(\mathcal F^s\rightarrow i_{*}i^*\mathcal F^s)$,
 \item  \label{c}If $t$ is a limit point, then\begin{equation*}
 \mathcal F^t=\bigcap_{r<t} \mathcal F^r.\end{equation*}
\end{enumerate}

Step 2: Quasi-Coherence of $\mathcal F^t$.\\
We claim that for any $t\in \Ass(\mathcal F)$, $\mathcal F^t$ is quasi-coherent and $\Ass(\mathcal F^t)=\{r: r\geqslant t\}$. We prove the claim by transfinite induction.
When $t=0$, the claim naturally holds. Now, we assume for any $s$ such that $s<t$, $\mathcal F^s$ is quasi-coherent and $\Ass(\mathcal F^s)=\{r: r\geqslant s\}$. We want to prove $\mathcal F^{t}$ is quasi-coherent and $\Ass(\mathcal F^{t})=\{r: r\geqslant t\}$. First, we consider the case that $t$ is a successor of some element $s$. Since $i: \Spec \mathcal O_{X,s}\rightarrow X$ is quasi-compact and quasi-separated, $ i_{*}i^{*}\mathcal F^s$ is quasi-coherent, and so is $\mathcal F^{t}$. Since $s$ is the minimal element in $\Ass(\mathcal F^s)$, by Lemma \ref{Lemma:key}, $\Ass(\mathcal F^{t})=\{r: r\geqslant t\}$ and $\Ass(\mathcal F^s/\mathcal F^{t})=\{s\}$. Secondly, we assume that $t$ is a limit point. If such $t$ exists, $(\Ass(\mathcal F),<)$ is not isomorphic to $\omega$. We may assume that for any affine open subscheme $U$ of $X$, $Ass(\mathcal F|_U)$ is a finite set. Let $U=\Spec(A)\subseteq X$, where $A$ is a Noetherian ring, $\Ass(\mathcal F^s|_U)$ is a finite set when $s<t$. Since $t$ is a limit point, there exists an $r<t$ such that for any $r'$ satisfying $r\leqslant r'<t$, $\Ass(\mathcal F^r|_U)=\Ass(\mathcal F^{r'}|_U)$.
Since $r\notin \Ass(\mathcal F^{r+1})$, $r\notin \Ass(\mathcal F^{r+1}|_U)$.
Note that $\Ass(\mathcal F^r|_U)=\Ass(\mathcal F^{r+1}|_U)$, so $r\notin \Ass(\mathcal F^r|_U)$, hence $r\notin U$.
Consider the following short exact sequence.
\[\xymatrix{0\ar[r]& \mathcal F^r|_U\ar[r]&\mathcal F^{r+1}|_U\ar[r]& (\mathcal F^r/\mathcal F^{r+1})|_U\ar[r]& 0}\]
Note that $\Ass((\mathcal F^r/\mathcal F^{r+1})|_U)=\Ass(\mathcal F^r/\mathcal F^{r+1})\cap U=\emptyset$, hence $(\mathcal F^r/\mathcal F^{r+1})|_U=0$. Therefore, $\mathcal F^r|_U=\mathcal F^{r+1}|_U$. By transfinite induction, we can see that $\mathcal F^{r'}|_U=\mathcal F^r|_U$ for any $r'$ satisfying $r\leqslant r'<t$, hence $\mathcal F^t|_U=\mathcal F^r|_U$. Therefore, $\mathcal F^t$ is quasi-coherent. Note that for any $r<t$, $\Ass(\mathcal F^{t})\subseteq \Ass(\mathcal F^r)=\{s: s\geqslant r\}$, so $\Ass(\mathcal F^t)\subseteq \{s: s\geqslant t\}$. For any $s$ such that $s\geqslant t$, there exists an affine open subset $U=\Spec(A)$ of $X$ such that $s\in U$. Note that there exists an $r$ satisfying $r<t$ such that $\mathcal F^r|_U=\mathcal F^t|_U$. Since $s\in \Ass(\mathcal F^r)\cap U$, $s\in \Ass(\mathcal F^t)$. Therefore,
$\Ass(\mathcal F^{t})= \{s: s\geqslant t\}$. \\

Step 3: Existence of Coprimary Filtration.\\
To prove the existence, we only need to prove if $(\Ass(\mathcal F),<)$ is isomorphic to a limit ordinal, $\bigcap \mathcal F^r=0$. We first prove it under the assumption \ref{1}. Otherwise, there exists some $\mathfrak{p}\in X$ such that $(\bigcap \mathcal F^r)_\mathfrak{p}\neq 0$. Thus, $(\mathcal F^t)_\mathfrak{p}\neq 0$ for any $t\in \Ass(\mathcal F)$. By Proposition \ref{prop:min}, there exists some $s\in\Ass(\mathcal F^t)$ such that $\mathfrak{p}\in \overline{\{s\}}$. Since there are only finitely many $s$ satisfying $\mathfrak{p}\in \overline{\{s\}}$, this leads to a contradiction. Now we prove it under the assumption \ref{2}. Note that $\bigcap \mathcal F^r$ is quasi-coherent,
$\Ass(\bigcap \mathcal F^r)\subseteq \Ass(\mathcal F^t)=\{s: s\geqslant t\}$ for any $t\in \Ass(\mathcal F)$. Thus, $\Ass(\bigcap \mathcal F^r)=\emptyset$, so $\bigcap \mathcal F^r=0$. \\

Step 4: Uniqueness of Coprimary Filtration.\\
 We assume $\mathcal G^t$ also satisfies the properties. We prove $\mathcal G^t=\mathcal F^t$ by transfinite induction. If the set $S=\{r\in \Ass(\mathcal F): \mathcal G^r\neq\mathcal F^r\}$ is non-empty, we may take $t$ to be the minimal element of $S$. Then $t\neq 0$ and $t$ is not a limit point. Now we may assume $t=r+1$. By the definition of $t$, we have $\mathcal F^r=\mathcal G^r$, hence $\Ass(\mathcal G^r)=\{s:s\geqslant r\}$. Since both $\Ass(\mathcal G^r)\subseteq \Ass(\mathcal G^{r+1})\cup\Ass(\mathcal G^r/\mathcal G^{r+1})$ and $\Ass(\mathcal G^r/\mathcal G^{r+1})=\{r\}$ hold,
\[\Ass(\mathcal G^t)\supseteq \{s:s\geqslant t\}.\]
 Let $U=\Spec (A)$ be an affine open neighborhood of $r$. $\Ass(\mathcal G^r|_U)$ is a finite set. Denote $\Ass(\mathcal G^r|_U)$ by $\{s_1,s_2,\cdots,s_n\}$, where $s_1<s_2<\cdots<s_n$ and $s_1=r$. We consider the following filtration of $\mathcal G^r|_U$.
\[0\subseteq \mathcal G^{s_{n-1}+1}|_U\subseteq \cdots \subseteq\mathcal G^{s_2+1}|_U\subseteq\mathcal G^{s_1+1}|_U\subseteq \mathcal G^r|_U\]
We claim that $\mathcal G^{s_n+1}|_U=0$ if $s_n$ is not the maximal element of $\Ass(\mathcal F)$.
We first claim that $\mathcal G^s|_U=\mathcal G^{s_n+1}|_U$ if $s\geqslant s_n+1$. Assume the claim is not true, there exists a minimal $s'$ such that  $\mathcal G^{s'}|_U\neq\mathcal G^{s_n+1}|_U$. If $s'$ is a limit point,
\[\mathcal G^{s'}|_U=\bigcap_{r<s'} (\mathcal G^r|_U)=\mathcal G^{s_n+1}|_U,\]
which leads to a contradiction. Now we may assume that $s'=s"+1$, consider the following short exact sequence.
\[\xymatrix{0\ar[r]& \mathcal G^{s"+1}|_U\ar[r]&\mathcal G^{s"}|_U\ar[r]& (\mathcal G^{s"}/\mathcal G^{s"+1})|_U\ar[r]& 0}\]
Since $s"\geqslant s_n+1>s_n$, $\Ass((\mathcal G^{s"}/\mathcal G^{s"+1})|_U)=\{s"\}\cap U=\{s"\}\cap \Ass(\mathcal G^r|_U)=\emptyset$, therefore $(\mathcal G^{s"}/\mathcal G^{s"+1})|_U=0$, and consequently $\mathcal G^{s"}|_U=\mathcal G^{s'}|_U$, which leads to a contradiction. If $(\Ass(\mathcal F),<)$ is isomorphic to a limit ordinal, then $0=\bigcap (\mathcal G^{s}|_U)= \mathcal G^{s_n+1}|_U$. If $(\Ass(\mathcal F),<)$ is isomorphic to a successor ordinal, then $\mathcal G^{s_{n}+1}|_U=\mathcal G^s|_U$ where $s$ is the maximal element of $\Ass(\mathcal F)$. Note that $\Ass(\mathcal G^s|_U)=\{s\}\cap U=\emptyset$,  we have $\mathcal G^s|_U=0$. Therefore, $\mathcal G^{s_n+1}|_U=0$.
Now, we claim that $\mathcal G^s|_U=\mathcal G^{s_i+1}|_U$ if $s_{i+1}\geqslant s\geqslant s_i+1$, where $1\leqslant i\leqslant n-1$. Assume the claim is not true, there exists a minimal $s'$ satisfying $s_{i+1}\geqslant s'> s_i+1$ such that  $\mathcal G^{s'}|_U\neq\mathcal G^{s_i+1}|_U$. If $s'$ is a limit point,
\[\mathcal G^{s'}|_U=\bigcap_{r<s'} \mathcal G^r|_U=\mathcal G^{s_i+1}|_U,\]
which leads to a contradiction. Now we may assume that $s'=s"+1$, consider the following short exact sequence.
\[\xymatrix{0\ar[r]& \mathcal G^{s"+1}|_U\ar[r]&\mathcal G^{s"}|_U\ar[r]& (\mathcal G^{s"}/\mathcal G^{s"+1})|_U\ar[r]& 0}\]
Since $s_{i+1}\geqslant s'>s"\geqslant s_i+1>s_i$, $\Ass((\mathcal G^{s"}/\mathcal G^{s"+1})|_U)=\{s"\}\cap U=\{s"\}\cap \Ass(\mathcal G^r|_U)=\emptyset$, therefore $(\mathcal G^{s"}/\mathcal G^{s"+1})|_U=0$, and consequently $\mathcal G^{s"}|_U=\mathcal G^{s'}|_U$, which leads to a contradiction. From the discussion above, we can see that
\begin{enumerate}
\item $\Ass(\mathcal G^{s_{n-1}+1}|_U)=\{s_n\}$,
\item $\Ass((\mathcal G^{s_{i}+1}/\mathcal G^{s_{i+1}+1})|_U)=\{s_{i+1}\}$, where $1\leqslant i\leqslant n-2$.
\end{enumerate}
Note that $t=s_1+1$, $\Ass(\mathcal G^t|_U)\subseteq \{s_2,\cdots, s_n\}$, hence $r=s_1\notin \Ass(\mathcal G^t)$. Therefore, $\Ass(\mathcal G^t)=\Ass(\mathcal G^r)\backslash\{r\}$. By Lemma \ref{Lemma:key},
we can see that $\mathcal F^t=\mathcal G^t$.
\end{proof}

\begin{defn}
Let $X$ be a locally Noetherian scheme, $\mathcal F$ be a non-zero coherent sheaf on $X$ and
 $\Ass (\mathcal F)$ be a well-ordering extension of $\Ass(\mathcal F)$ in Example \ref{Eg: key}.
 The unque filtration of coherent sheaves on $X$, $(\mathcal F^t)_{t\in \Ass(\mathcal F)}$, defined in Theorem
 \ref{Thm: coprimary filtration} is called a coprimary filtration of $\mathcal F$. In particular, if $X$ is a Noetherian scheme, then a coprimary filtration of $\mathcal F$ could be written in the form:
 \[0=\mathcal F^0\subseteq \mathcal F^1\subseteq \cdots \subseteq  \mathcal F^{n-1}\subseteq \mathcal F^n=\mathcal F.\]
  .
\end{defn}

\begin{thm}\label{Thm:commutative ring}
Let $A$ be a Noetherian ring and $M$ be a non-zero $A$-module. Let $(\Ass(\widetilde{M}),<)$ be a well-ordering extension of $\Ass(\widetilde{M})$ in Example \ref{Eg: key}. Then there exists a unique filtration of $A$-modules, $(M^t)_{t\in \Ass(\widetilde{M})}$, satisfying the following properties,
\begin{enumerate}[label=\rm(\alph*)]
\item $ M^r\supset  M^s$ if $r<s$,
\item $M^0=M$, where $0$ is the minimal element of $\Ass(\widetilde M)$,
\item $\Ass(\widetilde{M^t/M^{t+1}})=\{t\}$,
\item If $t$ is a limit point of $(\Ass(\widetilde M),<)$, then
 \[ M^t=\bigcap_{r<t} \mathcal M^r.\]
\item \label{Item:Important}$\Ass(\widetilde{M^t})=\{r: r\geqslant t\}$.
\end{enumerate}
Moreover, we also have
\begin{enumerate}[label=\rm(\alph*)]
\item
\[M^{t+1}=\ker( M^t\rightarrow  (M^t)_t),\]
\item
If $(\Ass(\widetilde M),<)$ is isomorphic to a limit ordinal, then
\[\bigcap  M^r=0.\]
      If $(\Ass(\widetilde M),<)$ is isomorphic to a successor ordinal, then $ M^t$ is coprimary where $t$ is the maximal element of $\Ass(\widetilde M)$.

\end{enumerate}
\end{thm}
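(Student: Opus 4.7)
The plan is to imitate the transfinite recursion of Theorem \ref{Thm: coprimary filtration}: set $M^0 = M$; at a successor $t = s+1$, put $M^{t} = \ker(M^s \to (M^s)_s)$; and at a limit point $t$, put $M^t = \bigcap_{r<t} M^r$. Properties (a), (b), (d), and the ``Moreover'' clause (a) are immediate. Because we work with a single module rather than a quasi-coherent sheaf on an arbitrary locally Noetherian scheme, there is no quasi-coherence issue to track, and the local-finiteness hypotheses that appeared in Theorem \ref{Thm: coprimary filtration} can be dispensed with; the real work is to establish property \ref{Item:Important} directly.

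The heart of the argument is the following lemma, which I prove by transfinite induction on $t$: if $m \in M$ satisfies $\ann(m) = s$ with $s \in \Ass(\widetilde M)$, and $t \leqslant s$ in the well-order, then $m \in M^t$. The base case $t = 0$ and the limit case are automatic. For the successor step from $r$ to $r+1$ with $r < s$, I must produce $u \in A \setminus r$ such that $um = 0$, i.e., $u \in s \setminus r$. The decisive observation is that the well-order extends the partial order of Example \ref{Eg: key} ($\mathfrak{q} < \mathfrak{p}$ iff $\mathfrak{q} \subsetneq \mathfrak{p}$), so $r < s$ in the well-order rules out $s \subseteq r$ (which would force $s \leqslant r$ in the partial order and hence in the well-order), giving $s \setminus r \neq \emptyset$.

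With this lemma in hand, property \ref{Item:Important} follows by transfinite induction. The inclusion $\Ass(\widetilde{M^t}) \subseteq \{r : r \geqslant t\}$ comes from intersecting the inductive descriptions of $\Ass(\widetilde{M^r})$ for $r < t$; the reverse inclusion comes straight from the lemma, since for each $s \geqslant t$ one may pick $m \in M$ with $\ann(m) = s$ (using Proposition \ref{prop:ann}), conclude $m \in M^t$, and read off $s \in \Ass(\widetilde{M^t})$. Property (c) and the successor step of \ref{Item:Important} are then joint consequences of Lemma \ref{Lemma:key} applied to $\widetilde{M^s}$, since $s$ is partial-order minimal in $\Ass(\widetilde{M^s}) = \{r : r \geqslant s\}$ (the well-order extends the partial order). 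For the ``Moreover'' clause (b): in the limit-ordinal case $\Ass(\widetilde{\bigcap_r M^r}) \subseteq \bigcap_t \{r : r \geqslant t\} = \emptyset$, forcing the intersection to vanish; in the successor-ordinal case, $M^{t_{\max}}$ is coprimary by \ref{Item:Important}.

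Uniqueness mirrors the template of Theorem \ref{Thm: coprimary filtration} but is considerably cleaner. If another filtration $(N^t)$ satisfying (a)--(e) first disagrees with $(M^t)$ at some $t$, then $t \neq 0$ by (b), and $t$ cannot be a limit point by (d) applied to both filtrations; so $t = r+1$ with $N^r = M^r$, and the characterization in Lemma \ref{Lemma:key}, together with $\Ass(\widetilde{N^r}) = \{s : s \geqslant r\}$ and $\Ass(\widetilde{N^r/N^{r+1}}) = \{r\}$, forces $N^{r+1} = \ker(N^r \to (N^r)_r) = M^{r+1}$, a contradiction. The main obstacle to the whole program is isolating the right lemma; once the implication ``$\ann(m) = s$ and $t \leqslant s$ $\Rightarrow$ $m \in M^t$'' is in place, everything else reduces to bookkeeping on top of Lemma \ref{Lemma:key} and the associated-prime facts in Section 2.
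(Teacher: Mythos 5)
Your proposal is correct and follows essentially the same route as the paper: the identical transfinite recursion, Lemma \ref{Lemma:key} at successor steps, the key sub-lemma that an element with $\ann(m)=s$ survives into every $M^r$ with $r\leqslant s$ (proved by the same $a\in s\setminus r$ localization trick, justified by the well-order extending the inclusion order), and uniqueness via a minimal point of disagreement plus Lemma \ref{Lemma:key}. The only difference is cosmetic: you isolate that sub-lemma as a standalone statement used at all $t$, whereas the paper embeds it in the limit-point case of the induction.
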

\begin{proof}
Step 1: Construction of $M^t$.\\
We define $M^t$ by transfinite recursion,
\begin{enumerate}[label=\rm(\alph*)]
\item \label{a}$M^0=M$,
\item \label{b} If $t$ is a successor of an element $s$ (i.e. $t=s+1$), then
$M^{t}=\ker(M^s\rightarrow (M^s)_s)$,
 \item  \label{c}If $t$ is a limit point, then\begin{equation*}
  M^t=\bigcap_{r<t} M^r.\end{equation*}
\end{enumerate}

Step 2: Associated Points of $\widetilde{M^t}$.\\
We claim that for any $t\in \Ass(\widetilde M)$, $\Ass(\widetilde{M^t})=\{r: r\geqslant t\}$. We prove the claim by transfinite induction.
When $t=0$, the claim naturally holds. Now, we assume for any $s$ such that $s<t$, $\Ass(\widetilde{ M^s})=\{r: r\geqslant s\}$. We want to prove $\Ass(\widetilde{M^{t}})=\{r: r\geqslant t\}$. First, we consider the case that $t$ is a successor of some element $s$. Since $s$ is the minimal element in $\Ass(\widetilde{M^s})$, by Lemma \ref{Lemma:key}, $\Ass(\widetilde{M^{t}})=\{r: r\geqslant t\}$ and $\Ass(\widetilde{M^s/M^{t}})=\{s\}$. Secondly, we assume that $t$ is a limit point. Note that for any $r<t$, $\Ass(\widetilde{M^{t}})\subseteq \Ass(\widetilde{M^r})=\{s: s\geqslant r\}$, so $\Ass(\widetilde{M^t})\subseteq \bigcap\limits_{r<t}\{s: s\geqslant r\}=\{s: s\geqslant t\}$. We want to show that for any $s$ such that $s\geqslant t$, $s\in \Ass(\widetilde{M^t})$. Note that $s\in \Ass(\widetilde{M})$, by Proposition \ref{prop:ann}, there exists some $x\in M$ such that $\ann_A(x)=s$. Let $N$ be the submodule of $M$ generated by $x$. Consider the $A$-module homomorphism $x: A\rightarrow N$ which maps $1$ to $x$, we have a short exact sequence
\[\xymatrix{0\ar[r]& s\ar[r]&A\ar[r]& N\ar[r]& 0},\]
 thus $N\cong A/s$. We claim that for any $r$ such that $r<t$, $N\subseteq M^r$. We prove the claim by transfinite induction. When $r=0$, $N\subseteq M$ naturally holds. Now, we assume that for any $q$ such that $q<r$, $N\subseteq M^q$. We want to prove $N\subseteq M^r$. First, we consider the case that $r$ is a successor of some element $q$. Let $\phi_q$ be the homomorphism $M^q\rightarrow (M^q)_q$. Since $N\subseteq M^q$, we have $x\in M^q$. Because both $\ann_A(x)=r$ and $r\not\subseteq q$ hold, there exists some $a\in r\backslash q$ such that $ax=0$. Therefore, $\phi_q(x)=0$ in $(M^q)_q$. Since $M^r=\ker(\phi_q)$, we have $x\in M^r$, thus $N\subseteq M^r$. Secondly, we assume that $r$ is a limit point. Since $N\subseteq M^q$ for any $q<r$, we have $N\subseteq \bigcap\limits_{q<r} M^q=M^r$. Therefore, for any $r<t$, $N\subseteq M^r$. Since $M^t=\bigcap\limits_{r<t} M^r$, we have $N\subseteq M^t$. Since $N\cong A/s$, we have $\Ass(\widetilde N)=\{s\}$. Since $\Ass(\widetilde N)\subseteq \Ass(\widetilde {M^t})$, $s\in \Ass(\widetilde {M^t})$. Therefore, for any $s$ such that $s\geqslant t$, $s\in \Ass(\widetilde{M^t})$. Hence, $\Ass(M^t)=\{s:s\geqslant t\}$.\\

Step 3: Uniqueness. \\
 We assume $N^t$ also satisfies the properties. We prove $N^t=M^t$ by transfinite induction. If the set $S=\{r\in \Ass(\widetilde M): M^r\neq N^r\}$ is non-empty, we may take $t$ to be the minimal element of $S$. Then $t\neq 0$ and $t$ is not a limit point. Now we may assume $t=r+1$. By the definition of $t$, we have $M^r=N^r$. Note that both $\Ass(\widetilde{N^r/N^t})=\{r\}$ and $\Ass(\widetilde{N^t})=\{q: q\geqslant t\}=\Ass(\widetilde{N^r})\backslash \{r\}$ hold.
 By Lemma \ref{Lemma:key}, we can see that $N^t=\ker(N^r\rightarrow (N^r)_r)=M^t$.\\

Step 4: Properties.\\
We only need to prove if $(\Ass(\widetilde{M}),<)$ is isomorphic to a limit ordinal, $\bigcap M^r=0$. Note that
$\Ass(\widetilde{\bigcap  M^r})\subseteq \Ass(\widetilde {M^t})=\{s: s\geqslant t\}$ for any $t\in \Ass(\widetilde M)$. Thus, $\Ass(\widetilde{\bigcap M^r})=\emptyset$, so $\bigcap M^r=0$.

\end{proof}

\begin{eg}
In this example, we construct a filtration of $\mathbb Z$-modules. Let $\mathcal P$ be the set of primes. Let $M=\mathbb Z\bigoplus \mathbb Z\bigoplus (\bigoplus\limits_{p\in \mathcal P} \mathbb Z/p\mathbb Z)$, $M^1=\mathbb Z\bigoplus (\bigoplus\limits_{p\in \mathcal P} \mathbb Z/p\mathbb Z)$, $M^2=2\mathbb Z\bigoplus (\bigoplus\limits_{p\geqslant 3} \mathbb Z/p\mathbb Z)$, $M^3=6\mathbb Z\bigoplus (\bigoplus\limits_{p\geqslant 5} \mathbb Z/p\mathbb Z)$, $\cdots$. We can see that $\bigcap M^r=0$.
Therefore, condition \ref{Item:Important} in Theorem \ref{Thm:commutative ring} is necessary for the uniqueness.
\end{eg}

\section{Equivalent coprimary filtrations}
\begin{prop}\label{Prop:intersection}
Let $X$ be a locally Noetherian scheme, $\mathcal F$ be a non-zero coherent sheaf on $X$ and $(\Ass(\mathcal F),<)$ be the well-founded partially ordered set defined in Example \ref{Eg: key}. Let $i:\Spec \mathcal O_{X,\mathfrak{p}}\rightarrow X$ and $j:\Spec \mathcal O_{X,\mathfrak{q}}\rightarrow X$ be canonical morphisms, where $\mathfrak{p},\mathfrak{q}\in \Ass(\mathcal F)$, $\mathfrak{p}\neq \mathfrak{q}$ and $\rk(\mathfrak{p})=\rk(\mathfrak{q})=0$. Denote
$\ker(\mathcal F\rightarrow i_*i^*\mathcal F)$ by $\mathcal G$, $\ker(\mathcal F\rightarrow j_*j^* \mathcal F)$ by $\mathcal H$,
then \[\mathcal G\cap \mathcal H=\ker(\mathcal G\rightarrow j_*j^*\mathcal G)=\ker(\mathcal H\rightarrow i_*i^*\mathcal H).\]
\end{prop}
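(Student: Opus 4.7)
The plan is to compute the associated points of all three subsheaves and then invoke the uniqueness built into Lemma \ref{Lemma:key}. By Lemma \ref{Lemma:key} applied to the pair $(\mathcal F,\mathfrak{p})$ we have $\Ass(\mathcal F/\mathcal G)=\{\mathfrak{p}\}$ and $\Ass(\mathcal G)=\Ass(\mathcal F)\backslash\{\mathfrak{p}\}$, and symmetrically $\Ass(\mathcal F/\mathcal H)=\{\mathfrak{q}\}$ and $\Ass(\mathcal H)=\Ass(\mathcal F)\backslash\{\mathfrak{q}\}$. Write $\mathcal K=\mathcal G\cap \mathcal H$.

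First I would identify $\Ass(\mathcal G/\mathcal K)$. The composite $\mathcal G\hookrightarrow \mathcal F\twoheadrightarrow \mathcal F/\mathcal H$ has kernel exactly $\mathcal K$, so it induces an injection $\mathcal G/\mathcal K\hookrightarrow \mathcal F/\mathcal H$, whence $\Ass(\mathcal G/\mathcal K)\subseteq \Ass(\mathcal F/\mathcal H)=\{\mathfrak{q}\}$. Since $\mathfrak{q}\in \Ass(\mathcal G)$ but $\mathfrak{q}\notin \Ass(\mathcal H)$, we have $\mathcal G\not\subseteq \mathcal H$, so $\mathcal G/\mathcal K\neq 0$ and therefore $\Ass(\mathcal G/\mathcal K)=\{\mathfrak{q}\}$. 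Next I would compute $\Ass(\mathcal K)$ using the short exact sequence $0\to \mathcal K\to \mathcal G\to \mathcal G/\mathcal K\to 0$: this gives $\Ass(\mathcal G)\backslash\{\mathfrak{q}\}\subseteq \Ass(\mathcal K)\subseteq \Ass(\mathcal G)$, and since $\mathcal K\subseteq \mathcal H$ forces $\Ass(\mathcal K)\subseteq \Ass(\mathcal H)$, which does not contain $\mathfrak{q}$, we conclude $\Ass(\mathcal K)=\Ass(\mathcal G)\backslash\{\mathfrak{q}\}$.

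Since $\mathfrak{q}$ is minimal in $\Ass(\mathcal F)$ (as $\rk(\mathfrak{q})=0$) and $\Ass(\mathcal G)\subseteq \Ass(\mathcal F)$, the point $\mathfrak{q}$ is also of rank zero in $\Ass(\mathcal G)$. The two associated-point identities above are exactly what the ``if'' direction of Lemma \ref{Lemma:key} requires for the pair $(\mathcal G,\mathfrak{q})$ and the subsheaf $\mathcal K\subseteq \mathcal G$; invoking it yields $\mathcal K=\ker(\mathcal G\to j_*j^*\mathcal G)$. The symmetric argument, interchanging the roles of $(\mathcal G,\mathfrak{p},i)$ and $(\mathcal H,\mathfrak{q},j)$, gives $\mathcal K=\ker(\mathcal H\to i_*i^*\mathcal H)$, which completes the chain of equalities.

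The main obstacle I anticipate is verifying that $\mathcal G/\mathcal K$ is nonzero, which is the nontrivial half of the condition $\Ass(\mathcal G/\mathcal K)=\{\mathfrak{q}\}$ needed to apply Lemma \ref{Lemma:key}. This is handled cleanly by the observation that $\mathfrak{q}$ lies in $\Ass(\mathcal G)\backslash \Ass(\mathcal H)$, forcing $\mathcal G\not\subseteq \mathcal H$. The remaining steps are bookkeeping on associated points using the short exact sequence proposition.
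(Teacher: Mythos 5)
Your proof is correct, but it takes a genuinely different route from the paper's. You work entirely at the level of associated points: you use the ``only if'' direction of Lemma \ref{Lemma:key} to record $\Ass(\mathcal G)=\Ass(\mathcal F)\backslash\{\mathfrak p\}$, $\Ass(\mathcal F/\mathcal H)=\{\mathfrak q\}$, etc., compute $\Ass(\mathcal G/\mathcal K)=\{\mathfrak q\}$ and $\Ass(\mathcal K)=\Ass(\mathcal G)\backslash\{\mathfrak q\}$ via the embedding $\mathcal G/\mathcal K\hookrightarrow\mathcal F/\mathcal H$ and the short exact sequence, check that $\mathfrak q$ remains minimal in $\Ass(\mathcal G)$, and then invoke the ``if'' direction of Lemma \ref{Lemma:key} for the pair $(\mathcal G,\mathfrak q)$; your treatment of the nonvanishing of $\mathcal G/\mathcal K$ (via $\mathfrak q\in\Ass(\mathcal G)\backslash\Ass(\mathcal H)$) is exactly the point that needs care, and it is handled correctly. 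The paper instead does a direct diagram chase: since localization is exact, $j_*j^*\mathcal G\rightarrow j_*j^*\mathcal F$ is injective, so the vanishing of $\mathcal G\cap\mathcal H\rightarrow j_*j^*\mathcal F$ forces $\mathcal G\cap\mathcal H\subseteq\ker(\mathcal G\rightarrow j_*j^*\mathcal G)$, and the reverse inclusion follows from the universal property of $\mathcal H$ as a kernel. The paper's argument is more elementary and does not actually exploit the rank-zero hypotheses, whereas your argument leans on them essentially (they are needed to apply Lemma \ref{Lemma:key} in both directions); in compensation, your route produces the associated points of $\mathcal G\cap\mathcal H$ and of the subquotients as byproducts, which is precisely the bookkeeping used later in Corollary \ref{cor: filtration} and Theorem \ref{thm:direct sum}. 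The only small points worth making explicit are that $\mathcal K=\mathcal G\cap\mathcal H$ and $\mathcal G/\mathcal K$ are quasi-coherent (kernels and cokernels of maps of quasi-coherent sheaves), so that Lemma \ref{Lemma:key} and the criterion ``$\Ass=\emptyset$ iff zero'' apply; these are immediate.
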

\begin{proof}
We only need to show $\mathcal G\cap \mathcal H=\ker(\mathcal G\rightarrow j_*j^*\mathcal G)$ by symmetry. Consider the following diagram:
\[\xymatrix{\mathcal G\cap \mathcal H\ar@{}[rd]|-{\square}\ar[r]^-{f_1}\ar[d]_-{f_2}&\mathcal G\ar[r]^-{\varphi_1}\ar[d]_-{g}&j_*j^*\mathcal G\ar[d]^-{g'}\\
\mathcal H\ar[r]_-{h}&\mathcal F\ar[r]_-{\varphi_2}&j_*j^*\mathcal F}\]
We can see that $g'\circ \varphi_1\circ f_1=\varphi_2\circ h\circ f_2=0$.
Since $g:\mathcal G\rightarrow \mathcal F$ is injective, we have an injective $\mathcal O_{X,\mathfrak{q}}$ -module homomorphism $\mathcal G_\mathfrak{q}\rightarrow \mathcal F_\mathfrak{q}$ induced by $g$, hence $g':j_*j^*\mathcal G\rightarrow j_*j^*\mathcal F$ is injective. Because $g'$ is a monomorphism,
$\varphi_1\circ f_1=0$. Therefore,
\[\mathcal G\cap \mathcal H\subseteq \ker(\mathcal G\rightarrow j_*j^*\mathcal G).\]
Denote $\ker(\mathcal G\rightarrow j_*j^*\mathcal G)$ by $i:\mathcal W\rightarrow \mathcal G$.
Hence, $\varphi_2\circ g\circ i=g'\circ \varphi_1\circ i=0$.
Since $\mathcal H=\ker(\varphi_2)$, we can see that
$\ker(\mathcal G\rightarrow j_*j^*\mathcal G)\subseteq \mathcal H$. Therefore, \[\ker(\mathcal G\rightarrow j_*j^*\mathcal G)\subseteq \mathcal G\cap \mathcal H.\]

\end{proof}

\begin{cor}\label{cor: filtration}Let $X$ be a Noetherian scheme, $\mathcal F$ be a non-zero  coherent sheaf on $X$. Let
\[0=\mathcal F^0\subseteq \mathcal F^1\subseteq \cdots \subseteq \mathcal F^{i-2}\subseteq \mathcal F^{i-1}\subseteq \mathcal F^i\subseteq \cdots \subseteq \mathcal F^n=\mathcal F\]
be a coprimary filtration of $\mathcal  F$, and let $\Ass(\mathcal F^i/\mathcal F^{i-1})=\{\mathfrak{p}\}$,
$\Ass(\mathcal F^{i-1}/\mathcal F^{i-2})=\{\mathfrak{q}\}$, $\mathcal G^{i-1}=\ker(\mathcal F^i\rightarrow  i_*i^*\mathcal F^i)$, where $i:\Spec O_{X,\mathfrak{q}}\rightarrow X$ is a canonical morphism. Assume that $\overline{\{\mathfrak{q}\}}\not \subseteq \overline{\{\mathfrak{p}\}}$,  then the filtration
\[0=\mathcal F^0\subseteq \mathcal F^1\subseteq \cdots \subseteq \mathcal F^{i-2}\subseteq \mathcal G^{i-1}\subseteq \mathcal F^i\subseteq \cdots\subseteq \mathcal F^n=\mathcal F\]
is also a coprimary filtration of $\mathcal F$.
\end{cor}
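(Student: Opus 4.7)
The plan is to recognize the replacement filtration as the coprimary filtration of $\mathcal F$ for the linear extension of the partial order obtained from the original well-ordering by transposing $\mathfrak{p}$ and $\mathfrak{q}$. The hypothesis $\overline{\{\mathfrak{q}\}}\not\subseteq\overline{\{\mathfrak{p}\}}$ says exactly that $\mathfrak{p}\not<\mathfrak{q}$ in the partial order of Example \ref{Eg: key}, which together with the relation $\mathfrak{p}<\mathfrak{q}$ forced by the original well-ordering shows that $\mathfrak{p}$ and $\mathfrak{q}$ are incomparable in the partial order. Hence the transposed ordering is another valid linear extension, and all that needs checking is the inclusion $\mathcal F^{i-2}\subseteq\mathcal G^{i-1}$ together with the associated-point equalities $\Ass(\mathcal G^{i-1}/\mathcal F^{i-2})=\{\mathfrak{p}\}$ and $\Ass(\mathcal F^i/\mathcal G^{i-1})=\{\mathfrak{q}\}$.

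First I would verify that both $\mathfrak{p}$ and $\mathfrak{q}$ have rank $0$ in the partial order restricted to $\Ass(\mathcal F^i)$. The point $\mathfrak{p}$ is the minimum of $\Ass(\mathcal F^i)$ in the original well-ordering, hence also in the partial order it extends, so $\rk(\mathfrak{p})=0$ there; likewise $\mathfrak{q}$ is rank $0$ in $\Ass(\mathcal F^{i-1})=\Ass(\mathcal F^i)\setminus\{\mathfrak{p}\}$, and incomparability of $\mathfrak{p}$ and $\mathfrak{q}$ prevents $\mathfrak{p}$ from sitting partially below $\mathfrak{q}$, so $\rk(\mathfrak{q})=0$ in $\Ass(\mathcal F^i)$ too. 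I would then apply Proposition \ref{Prop:intersection} to $\mathcal F^i$ with the two rank-$0$ points $\mathfrak{p}$ and $\mathfrak{q}$. Writing $i_{\mathfrak{p}}$ and $i_{\mathfrak{q}}$ for the canonical morphisms, the construction in Theorem \ref{Thm: coprimary filtration} gives $\ker(\mathcal F^i\to (i_{\mathfrak{p}})_{*}i_{\mathfrak{p}}^{*}\mathcal F^i)=\mathcal F^{i-1}$, and by definition $\ker(\mathcal F^i\to(i_{\mathfrak{q}})_{*}i_{\mathfrak{q}}^{*}\mathcal F^i)=\mathcal G^{i-1}$, so Proposition \ref{Prop:intersection} yields
\[\mathcal F^{i-1}\cap\mathcal G^{i-1}=\ker(\mathcal F^{i-1}\to(i_{\mathfrak{q}})_{*}i_{\mathfrak{q}}^{*}\mathcal F^{i-1})=\ker(\mathcal G^{i-1}\to(i_{\mathfrak{p}})_{*}i_{\mathfrak{p}}^{*}\mathcal G^{i-1}).\]
Because $\mathfrak{q}$ is the minimum of $\Ass(\mathcal F^{i-1})$, the middle term is $\mathcal F^{i-2}$ by the construction of the original filtration. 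This simultaneously gives the required inclusion $\mathcal F^{i-2}\subseteq\mathcal G^{i-1}$ and the key identification $\mathcal F^{i-2}=\ker(\mathcal G^{i-1}\to(i_{\mathfrak{p}})_{*}i_{\mathfrak{p}}^{*}\mathcal G^{i-1})$.

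Finally, two passes of Lemma \ref{Lemma:key} extract the associated points of the new successive quotients. Applied to $\mathcal F^i$ with the rank-$0$ point $\mathfrak{q}$, it yields $\Ass(\mathcal F^i/\mathcal G^{i-1})=\{\mathfrak{q}\}$ and $\Ass(\mathcal G^{i-1})=\Ass(\mathcal F^i)\setminus\{\mathfrak{q}\}$, in which $\mathfrak{p}$ is still rank $0$; a second application to $\mathcal G^{i-1}$ with the point $\mathfrak{p}$ and the kernel description found above yields $\Ass(\mathcal G^{i-1}/\mathcal F^{i-2})=\{\mathfrak{p}\}$. Together with the unchanged quotients $\mathcal F^{j}/\mathcal F^{j-1}$ for $j\notin\{i-1,i\}$, this exhibits the replacement chain as the coprimary filtration for the transposed linear extension. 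The main obstacle I anticipate is purely bookkeeping: verifying the rank-$0$ conditions and reconciling the ascending indexing convention of the corollary with the descending construction of Theorem \ref{Thm: coprimary filtration}; once these are in place, Proposition \ref{Prop:intersection} and Lemma \ref{Lemma:key} do the real work.
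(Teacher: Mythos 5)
Your proposal is correct and follows essentially the same route as the paper: observe that $\overline{\{\mathfrak{q}\}}\not\subseteq\overline{\{\mathfrak{p}\}}$ forces $\rk(\mathfrak{p})=\rk(\mathfrak{q})=0$ in $(\Ass(\mathcal F^i),<)$, apply Proposition \ref{Prop:intersection} to identify $\mathcal F^{i-2}=\mathcal F^{i-1}\cap\mathcal G^{i-1}=\ker(\mathcal G^{i-1}\rightarrow i_*i^*\mathcal G^{i-1})$, and conclude via Theorem \ref{Thm: coprimary filtration} that the modified chain is the coprimary filtration for the transposed linear extension. Your extra passes through Lemma \ref{Lemma:key} merely make explicit the associated-point bookkeeping that the paper compresses into its final appeal to the theorem.
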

\begin{proof}
Because $\overline{\{\mathfrak{q}\}}\not \subseteq \overline{\{\mathfrak{p}\}}$, $\rk(\mathfrak{p})=\rk(\mathfrak{q})=0$ in $\Ass(\mathcal F^i, <)$. By Proposition \ref{Prop:intersection}, \[\mathcal F^{i-1}\cap \mathcal G^{i-1}=\mathcal F^{i-2}=\ker(\mathcal G^{i-1}\rightarrow i_*i^*\mathcal G^{i-1}).\]
Therefore, by Theorem \ref{Thm: coprimary filtration}, the filtration
\[0=\mathcal F^0\subseteq \mathcal F^1\subseteq \cdots \subseteq \mathcal F^{i-2}\subseteq \mathcal G^{i-1}\subseteq \mathcal F^i\subseteq \cdots\subseteq \mathcal F^n=\mathcal F\]
is also a coprimary filtration of $\mathcal F$.
\end{proof}

\begin{prop}\label{Prop:sum}
Let $X$ be a locally Noetherian scheme, $\mathcal F$ be a non-zero coherent sheaf on $X$, and $(\Ass(\mathcal F),<)$ be the well-founded partially ordered set defined in Example \ref{Eg: key}. Let $i:\Spec \mathcal O_{X,\mathfrak{p}}\rightarrow X$ and $j:\Spec \mathcal O_{X,\mathfrak{q}}\rightarrow X$ be canonical morphisms, where $\mathfrak{p},\mathfrak{q}\in \Ass(\mathcal F)$, $\mathfrak{p}\neq \mathfrak{q}$ and $\rk(\mathfrak{p})=\rk(\mathfrak{q})=0$.
Denote
$\ker(\mathcal F\rightarrow i_*i^*\mathcal F)$ by $\mathcal G$, $\ker(\mathcal F\rightarrow j_*j^* \mathcal F)$ by $\mathcal H$.
If $\overline{\{\mathfrak{p}\}}\cap \overline{\{\mathfrak{q}\}}=\emptyset$, then $\mathcal F=\mathcal G+\mathcal H$.
\end{prop}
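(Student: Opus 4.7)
The plan is to verify the equality on stalks: I will show that $\mathcal{F}_x = \mathcal{G}_x + \mathcal{H}_x$ for every $x \in X$, from which the equality of subsheaves $\mathcal{F} = \mathcal{G} + \mathcal{H}$ follows immediately. The core observation is that $\mathcal{F}/\mathcal{G}$ and $\mathcal{F}/\mathcal{H}$ are supported on the disjoint closed sets $\overline{\{\mathfrak{p}\}}$ and $\overline{\{\mathfrak{q}\}}$, so at each $x \in X$ at least one of the two quotients has a vanishing stalk.

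The first step is to pin down the associated points of the quotient $\mathcal{F}/\mathcal{G}$. Since $\rk(\mathfrak{p}) = 0$, the point $\mathfrak{p}$ is minimal in $\Ass(\mathcal{F})$, so Lemma \ref{Lemma:key} applies and gives $\Ass(\mathcal{F}/\mathcal{G}) = \{\mathfrak{p}\}$. Because $\mathcal{F}$ is coherent on the locally Noetherian scheme $X$, the quotient $\mathcal{F}/\mathcal{G}$ is coherent as well. For any $x \in \Supp(\mathcal{F}/\mathcal{G})$, pick an affine open $U = \Spec A$ containing $x$ and write $\mathcal{F}/\mathcal{G}|_U = \widetilde{N}$ for a nonzero finitely generated $A$-module $N$; choosing a minimal prime $y$ of $\Supp(\widetilde{N})$ that specializes to $x$, Proposition \ref{prop:min} gives $y \in \Ass(\mathcal{F}/\mathcal{G}) = \{\mathfrak{p}\}$, so $y = \mathfrak{p}$ and hence $x \in \overline{\{\mathfrak{p}\}}$. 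This yields $\Supp(\mathcal{F}/\mathcal{G}) \subseteq \overline{\{\mathfrak{p}\}}$, and the symmetric argument gives $\Supp(\mathcal{F}/\mathcal{H}) \subseteq \overline{\{\mathfrak{q}\}}$.

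Finally, for any $x \in X$, the disjointness hypothesis $\overline{\{\mathfrak{p}\}} \cap \overline{\{\mathfrak{q}\}} = \emptyset$ forces $x$ to lie outside at least one of these two closures. If $x \notin \overline{\{\mathfrak{p}\}}$ then $(\mathcal{F}/\mathcal{G})_x = 0$, so $\mathcal{F}_x = \mathcal{G}_x$; otherwise $x \notin \overline{\{\mathfrak{q}\}}$ and $\mathcal{F}_x = \mathcal{H}_x$; in either case $\mathcal{F}_x = \mathcal{G}_x + \mathcal{H}_x$. The main subtle point is the support computation $\Supp(\mathcal{F}/\mathcal{G}) \subseteq \overline{\{\mathfrak{p}\}}$, which crucially uses coherence of $\mathcal{F}$---without it the support could be strictly larger than the closure of the associated points and the whole strategy would collapse---while the remaining steps are formal stalk chasing.
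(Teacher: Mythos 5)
Your proof is correct and takes essentially the same route as the paper: both arguments come down to the support bounds $\Supp(\mathcal F/\mathcal G)\subseteq\overline{\{\mathfrak{p}\}}$ and $\Supp(\mathcal F/\mathcal H)\subseteq\overline{\{\mathfrak{q}\}}$ plus the disjointness hypothesis, the only difference being that you verify $\mathcal F_x=\mathcal G_x+\mathcal H_x$ stalk by stalk while the paper applies the same disjointness argument to $\Supp(\mathcal F/(\mathcal G+\mathcal H))$. One minor quibble: coherence is not actually what makes the support bound work---for any quasi-coherent sheaf on a locally Noetherian scheme a nonzero stalk has an associated point generalizing it, so $\Supp(\mathcal F/\mathcal G)\subseteq\overline{\{\mathfrak{p}\}}$ already follows from $\Ass(\mathcal F/\mathcal G)=\{\mathfrak{p}\}$---but since $\mathcal F$ is assumed coherent this does not affect your argument.
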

\begin{proof}
Note that
\[\Supp(\mathcal F/(\mathcal G+\mathcal H))\subseteq \Supp(\mathcal F/\mathcal H)=\overline{\{\mathfrak{q}\}}.\]
Similarly, $\Supp(F/(\mathcal G+\mathcal H))\subseteq \overline{\{\mathfrak{p}\}}$. Since $\overline{\{\mathfrak{p}\}}\cap \overline{\{\mathfrak{q}\}}=\emptyset$,
$\Supp(\mathcal F/(\mathcal G+\mathcal H))=\emptyset$. Thus $\mathcal F/(\mathcal G+\mathcal H)=0$. Therefore, $\mathcal F=\mathcal G+\mathcal H$.

\end{proof}

The following example shows that the condition $\overline{\{\mathfrak{p}\}}\cap \overline{\{\mathfrak{q}\}}=\emptyset$ is necessary in Proposition \ref{Prop:sum}.
\begin{eg}
Let $A=\mathbb{C}[x,y]$, $M=\mathbb{C}[x,y]/(xy)$.
We can see that $\Ass(M)=\{xA,yA\}$. Let $L=\ker(M\rightarrow M_{yA})$,
$N=\ker(M\rightarrow M_{xA})$. Thus,
$L=yM$ and $N=xM$. However, since $1\notin L+N$,
 $L+N\neq M$.
\end{eg}

\begin{defn}\label{defn:equivalence}
Let $X$ be a Noetherian scheme, $\mathcal F$ be a non-zero coherent sheaf on $X$.
Let
\[0=\mathcal F^0\subseteq \mathcal F^1\subseteq \cdots \subseteq \mathcal F^n=\mathcal F\]
and
\[0=\mathcal G^0\subseteq \mathcal G^1\subseteq \cdots \subseteq \mathcal G^n=\mathcal F\]
be two coprimary filtrations of $\mathcal F$. We say they are equivalent if $\mathcal F^{i+1}/\mathcal F^i\cong \mathcal G^{j+1}/\mathcal G^j$ when $\Ass(\mathcal F^{i+1}/\mathcal F^i)=\Ass(\mathcal G^{j+1}/\mathcal G^j)$.
\end{defn}

It can be shown that Definition \ref{defn:equivalence} defines an equivalence relation on the set of coprimary filtrations of $\mathcal F$.

\begin{thm}
Let $X$ be a  Noetherian scheme, $\mathcal F$ be a non-zero coherent sheaf on $X$ and $(\Ass(\mathcal F),<)$ be the well-founded partially ordered set defined in Example \ref{Eg: key}.
Assume for any $\mathfrak{p},\mathfrak{q}\in \Ass(\mathcal F)$ satisfying both $\mathfrak{p}\notin \overline{\{\mathfrak{q}\}}$ and $\mathfrak{q}\notin \overline{\{\mathfrak{p}\}}$,
we always have $\overline{\{\mathfrak{p}\}}\cap \overline{\{\mathfrak{q}\}}=\emptyset$.
Then any two coprimary filtrations of $\mathcal F$  are equivalent.
\end{thm}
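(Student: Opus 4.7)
The plan is to reduce the theorem to a single adjacent swap in the defining linear extension, and then to identify the corresponding subquotients via the direct sum decomposition provided by Proposition \ref{Prop:sum}. By Theorem \ref{thm: nt filtration}, each coprimary filtration of $\mathcal F$ is determined by a choice of linear extension of the partial order on $\Ass(\mathcal F)$. Since $\Ass(\mathcal F)$ is finite ($X$ is Noetherian and $\mathcal F$ is coherent), any two linear extensions can be connected by a finite sequence of adjacent transpositions, each swapping a pair of elements incomparable in the underlying partial order: given two linear extensions one locates an adjacent pair in the first that appears in reversed order in the second, observes that such a pair must be incomparable in the partial order (else the reversal would violate it), and swaps it to decrease the number of inversions between the two extensions. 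Combined with the fact, noted after Definition \ref{defn:equivalence}, that equivalence of coprimary filtrations is an equivalence relation, it thus suffices to show that two coprimary filtrations whose linear extensions differ by a single adjacent swap of incomparable elements are equivalent.

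Fix such a filtration $0 = \mathcal F^0 \subseteq \cdots \subseteq \mathcal F^n = \mathcal F$ with $\Ass(\mathcal F^k/\mathcal F^{k-1}) = \{\mathfrak{p}_k\}$, and suppose the swap occurs at positions $i-1, i$, with $\mathfrak{q} := \mathfrak{p}_{i-1}$ and $\mathfrak{p} := \mathfrak{p}_i$ incomparable in the partial order. In particular $\overline{\{\mathfrak{q}\}} \not\subseteq \overline{\{\mathfrak{p}\}}$, so setting $\mathcal G^{i-1} := \ker(\mathcal F^i \to \iota_{\mathfrak{q}*}\iota_\mathfrak{q}^*\mathcal F^i)$ with $\iota_\mathfrak{q}: \Spec \mathcal O_{X,\mathfrak{q}} \to X$, Corollary \ref{cor: filtration} provides a second coprimary filtration, obtained from the first by replacing $\mathcal F^{i-1}$ with $\mathcal G^{i-1}$. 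Uniqueness in Theorem \ref{thm: nt filtration} identifies this as the coprimary filtration corresponding to the swapped linear extension, so $\Ass(\mathcal G^{i-1}/\mathcal F^{i-2}) = \{\mathfrak{p}\}$ and $\Ass(\mathcal F^i/\mathcal G^{i-1}) = \{\mathfrak{q}\}$. The subquotients at all positions other than $i-1, i$ coincide with those of the original filtration, so it remains to produce isomorphisms $\mathcal F^{i-1}/\mathcal F^{i-2} \cong \mathcal F^i/\mathcal G^{i-1}$ and $\mathcal F^i/\mathcal F^{i-1} \cong \mathcal G^{i-1}/\mathcal F^{i-2}$.

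I carry this out by working inside $\mathcal F^i/\mathcal F^{i-2}$, whose associated set is exactly $\{\mathfrak{p}, \mathfrak{q}\}$, both of rank zero since $\mathfrak{p}$ and $\mathfrak{q}$ are incomparable. Define
\[\mathcal G := \ker\bigl(\mathcal F^i/\mathcal F^{i-2} \to \iota_{\mathfrak{p}*}\iota_\mathfrak{p}^*(\mathcal F^i/\mathcal F^{i-2})\bigr), \quad \mathcal H := \ker\bigl(\mathcal F^i/\mathcal F^{i-2} \to \iota_{\mathfrak{q}*}\iota_\mathfrak{q}^*(\mathcal F^i/\mathcal F^{i-2})\bigr).\]
Lemma \ref{Lemma:key} applied in both directions identifies $\mathcal G$ with $\mathcal F^{i-1}/\mathcal F^{i-2}$ and $\mathcal H$ with $\mathcal G^{i-1}/\mathcal F^{i-2}$ (in each case via the characterization by associated primes of the subsheaf and quotient). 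The hypothesis of the theorem forces $\overline{\{\mathfrak{p}\}} \cap \overline{\{\mathfrak{q}\}} = \emptyset$, so Proposition \ref{Prop:sum} gives $\mathcal G + \mathcal H = \mathcal F^i/\mathcal F^{i-2}$ and Proposition \ref{Prop:intersection} gives $\mathcal G \cap \mathcal H = 0$; hence $\mathcal F^i/\mathcal F^{i-2} = \mathcal G \oplus \mathcal H$. This direct sum decomposition yields $\mathcal F^i/\mathcal F^{i-1} = (\mathcal F^i/\mathcal F^{i-2})/\mathcal G \cong \mathcal H = \mathcal G^{i-1}/\mathcal F^{i-2}$ and $\mathcal F^i/\mathcal G^{i-1} = (\mathcal F^i/\mathcal F^{i-2})/\mathcal H \cong \mathcal G = \mathcal F^{i-1}/\mathcal F^{i-2}$, as required. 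The main subtlety I expect is the bookkeeping identifying $\mathcal G$ and $\mathcal H$ with the sub-quotients of $\mathcal F^i$ via Lemma \ref{Lemma:key}; the combinatorial reduction and the splitting step are routine given Proposition \ref{Prop:sum}.
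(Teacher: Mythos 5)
Your proposal is correct, and it uses the same local machinery as the paper -- the single-swap step via Corollary \ref{cor: filtration}, with Proposition \ref{Prop:intersection} and Proposition \ref{Prop:sum} (the latter activated by the hypothesis that incomparable points have disjoint closures) to identify the two swapped subquotients -- but the global organization is genuinely different. The paper argues by induction on $|\Ass(\mathcal F)|$: if the top subquotients of the two filtrations have the same associated point, uniqueness of $\ker(\mathcal F\rightarrow i_*i^*\mathcal F)$ reduces to the induction hypothesis; otherwise it chooses an extremal counterexample (one in which $\mathfrak{p}_m$ occurs as late as possible) and performs one swap to push $\mathfrak{p}_m$ upward, contradicting extremality. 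You instead invoke the standard combinatorial fact that any two linear extensions of a finite poset are connected by adjacent transpositions of incomparable pairs, together with the (paper-noted) fact that equivalence of coprimary filtrations is an equivalence relation; this makes the reduction to a single swap explicit and avoids both the induction on $n$ and the extremal-counterexample bookkeeping, at the cost of importing that combinatorial lemma (which the paper's bubble-up argument in effect reproves in the special case it needs). Two small points to tighten: Proposition \ref{Prop:intersection} by itself gives $\mathcal G\cap\mathcal H=\ker(\mathcal G\rightarrow \iota_{\mathfrak q*}\iota_{\mathfrak q}^*\mathcal G)$, and you should add the one-line observation $\Ass(\mathcal G\cap\mathcal H)\subseteq\Ass(\mathcal G)\cap\Ass(\mathcal H)=\emptyset$ (or cite Lemma \ref{Lemma:key} again) to conclude it vanishes; and the associated points of the swapped subquotients, $\Ass(\mathcal G^{i-1}/\mathcal F^{i-2})=\{\mathfrak p\}$ and $\Ass(\mathcal F^i/\mathcal G^{i-1})=\{\mathfrak q\}$, are cleaner to extract directly from Lemma \ref{Lemma:key} (as the paper does when it applies Corollary \ref{cor: filtration}) than from ``uniqueness for the swapped extension,'' since identifying which extension the new filtration belongs to presupposes those associated points. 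Neither point is a real gap, and the splitting argument inside $\mathcal F^i/\mathcal F^{i-2}$ is a perfectly valid variant of the paper's use of the second isomorphism theorem on $\mathcal G^i+\mathcal F'^i$.
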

\begin{proof}
Let $\Ass(\mathcal F)=\{\mathfrak{p}_1,\cdots,\mathfrak{p}_n\}$.
We prove the theorem by induction on $n$. When $n=1$, the theorem holds naturally.
Now, we assume that when $n=m-1$, this theorem holds. We assume that when $n=m$, the theorem does not hold.
Thus, there exist two coprimary filtrations of $\mathcal F$,  \begin{equation}\label{equ:first filtration}
0=\mathcal F^0\subseteq \mathcal F^1\subseteq \cdots \subseteq \mathcal F^m=\mathcal F
\end{equation}
and\begin{equation}\label{equ:second filtration}
0=\mathcal G^0\subseteq \mathcal G^1\subseteq \cdots \subseteq \mathcal G^m=\mathcal F,
\end{equation}
such that they are not equivalent. Let $\Ass(\mathcal F^{i+1}/\mathcal F^i)=\{\mathfrak{p}_{i+1}\}$,
$\Ass(\mathcal G^{j+1}/\mathcal G^j)=\{\mathfrak{q}_{j+1}\}$, where $i,j\in \{0,1,\cdots,n-1\}$.
If $\mathfrak{p}_m=\mathfrak{q}_m$, then by Theorem \ref{Thm: coprimary filtration}, $\mathcal F^{m-1}=\mathcal G^{m-1}=\ker(\mathcal F\rightarrow i_*i^*\mathcal F)$, where $i:\Spec(\mathcal O_{X,\mathfrak{p}_m})\rightarrow X$ is a canonical morphism. Using the induction hypothesis on $\mathcal F^{m-1}$, we are done. Now, we may assume
$\mathfrak{p}_m\neq \mathfrak{q}_m$ and $\mathfrak{p}_m=\mathfrak{q}_i$, where $i\neq m$. We could also assume that there does not exist a coprimary filtration  of $\mathcal F$, \begin{equation}\label{equ:third filtration}
0=\mathcal G'^0\subseteq \mathcal G'^1\subseteq \cdots \subseteq \mathcal G'^m=\mathcal F,
\end{equation}
satisfying the following conditions,
\begin{enumerate}
\item the coprimary filtration (\ref{equ:third filtration}) is not equivalent to the coprimary filtration (\ref{equ:first filtration}),
\item  $\Ass(\mathcal G'_k/\mathcal G'_{k-1})=\mathfrak{p}_m$ for some $k>i$.
\end{enumerate}
 Note that
$\mathfrak{q}_{i}=\mathfrak{p}_m\notin \overline{\{\mathfrak{q}_{i+1}\}}$, by Corollary \ref{cor: filtration}, there exists a coprimary filtration
\begin{equation}\label{equ:fourth filtration}
0=\mathcal G^0\subseteq \mathcal G^1\subseteq \cdots \subseteq \mathcal G^{i-1}\subseteq \mathcal F'^{i}\subseteq \mathcal G^{i+1}\subseteq \cdots\subseteq \mathcal G^n=\mathcal F,
\end{equation}
such that both $\Ass(\mathcal G^{i+1}/\mathcal F'^i)=\{\mathfrak{q}_i\}$ and $\Ass(\mathcal F'^i/\mathcal G^{i-1})=\{\mathfrak{q}_{i+1}\}$ hold, where
$\mathcal F'^i=\ker(\mathcal G^{i+1}\rightarrow i_*i^*\mathcal G^{i+1})$. By Proposition \ref{Prop:intersection}, $\mathcal G^{i-1}=\mathcal G^i\cap \mathcal F'^i$. Since $\overline{ \{\mathfrak{q}_{i}\}}\cap \overline{\{\mathfrak{q}_{i+1}\}}=\emptyset$, by Proposition \ref{Prop:sum}, $\mathcal G^{i+1}=\mathcal F'^i+\mathcal G^i$. Hence, $\mathcal G^{i+1}/\mathcal F'^i\cong \mathcal G^i/\mathcal G^{i-1}$ and
$\mathcal G^{i+1}/\mathcal G_i\cong \mathcal F'^i/\mathcal G^{i-1}$. Therefore,
the  coprimary filtration (\ref{equ:fourth filtration}) is equivalent to the coprimary filtration (\ref{equ:second filtration}). Thus, the  coprimary filtration (\ref{equ:fourth filtration}) is not equivalent to the coprimary filtration (\ref{equ:first filtration}), which leads to a contradiction since $\Ass(\mathcal G^{i+1}/\mathcal F'^i)=\{\mathfrak{p}_m\}$.

\end{proof}

\begin{cor}
Let $A$ be a Dedekind domain, $M$ be a non-zero finitely generated module over $A$. Then any two coprimary filtrations of $M$ are equivalent.
\end{cor}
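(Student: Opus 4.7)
The plan is to reduce the corollary to the theorem that immediately precedes it by verifying its geometric hypothesis in the Dedekind case. Set $X = \Spec(A)$ and $\mathcal F = \widetilde{M}$, so that $X$ is a Noetherian scheme, $\mathcal F$ is a coherent sheaf, and $\Ass(\widetilde{M})$ is finite; thus the preceding theorem applies as soon as one checks the incomparability-implies-disjointness condition on closures of associated points.

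The key observation is that the spectrum of a Dedekind domain has a very restricted topology: the nonzero prime ideals of $A$ are precisely the maximal ideals, so $\Spec(A) = \{(0)\} \cup \mathrm{MaxSpec}(A)$. Consequently, for any prime $\mathfrak{p}$ the closure $\overline{\{\mathfrak{p}\}}$ is either all of $\Spec(A)$ (when $\mathfrak{p}=(0)$) or the single closed point $\{\mathfrak{p}\}$ (when $\mathfrak{p}$ is maximal). I would next take two distinct associated primes $\mathfrak{p}, \mathfrak{q}$ of $\widetilde{M}$ satisfying $\mathfrak{p}\notin\overline{\{\mathfrak{q}\}}$ and $\mathfrak{q}\notin\overline{\{\mathfrak{p}\}}$ and case-split: if one of them were the generic point $(0)$, the other would automatically lie in its closure, contradicting the hypothesis. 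Hence both $\mathfrak{p}$ and $\mathfrak{q}$ are maximal ideals, so $\overline{\{\mathfrak{p}\}} = \{\mathfrak{p}\}$ and $\overline{\{\mathfrak{q}\}} = \{\mathfrak{q}\}$, which are disjoint because $\mathfrak{p}\neq \mathfrak{q}$.

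With the hypothesis of the previous theorem verified, the conclusion that any two coprimary filtrations of $\mathcal F = \widetilde{M}$ are equivalent is immediate; translating back from sheaves to modules via $\Gamma(X,-)$ gives the statement for $M$. I do not expect a substantive obstacle: the entire argument is a short topological case check, and once the dichotomy between $(0)$ and the maximal ideals is invoked, the incomparability assumption forces both primes to be closed points, making their closures trivially disjoint. The only potential subtlety worth stating explicitly is the finiteness of $\Ass(\widetilde{M})$, which follows because $M$ is finitely generated over the Noetherian ring $A$, ensuring the filtration has finite length and the theorem is genuinely applicable.
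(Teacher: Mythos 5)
Your proposal is correct and follows essentially the same route as the paper: both reduce to the preceding theorem by checking that incomparable associated primes in a Dedekind domain must be distinct maximal ideals, whence their closures are disjoint singletons. The only cosmetic difference is that you phrase incomparability via closures of points while the paper phrases it via ideal containment, which is the same condition.
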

\begin{proof}
Let $\mathfrak{p},\mathfrak{q}\in \Ass(M)$ satisfying both $\mathfrak{p}\not \subseteq \mathfrak{q}$ and $\mathfrak{q}\not \subseteq \mathfrak{p}$. Then both $\mathfrak{p}$ and $\mathfrak{q}$ are different maximal ideals of $A$, so $\overline{\{\mathfrak{p}\}}\bigcap \overline{\{\mathfrak{q}\}}=\emptyset$.
\end{proof}

\section{Direct sum decomposition}

\begin{thm}\label{thm:direct sum}
Let $X$ be a  Noetherian scheme and $\mathcal F$ be a non-zero coherent sheaf on $X$.
 Denote $\Ass(\mathcal F)$ by $\{\mathfrak{p}_1,\cdots, \mathfrak{p}_n\}$. Assume that $\overline{\{\mathfrak{p}_i\}}\cap \overline{\{\mathfrak{p}_j\}}=\emptyset$, where $i,j\in \{1,\cdots, n\}$ and $i\neq j$. Denote the canonical morphisms $\Spec(\mathcal O_{X,\mathfrak{p}_j})\rightarrow X$ by $t_j$, where $j\in \{1,\cdots, n\}$. Let
\[\mathcal F_i=\bigcap_{j\neq i}\ker(\mathcal F\rightarrow t_{j*}t_j^*\mathcal F),\]
 $0=\mathcal G^0\subseteq \mathcal G^1\subseteq \cdots \subseteq \mathcal G^n=\mathcal F$ be a coprimary filtration of $\mathcal F$, and
  $\Ass(\mathcal G^1)=\{\mathfrak{p}_i\}$. Then $\mathcal F_i=\mathcal G^1$. Moreover,
  \[\mathcal F=\bigoplus_{1\leqslant i\leqslant n} \mathcal F_i.\]
\end{thm}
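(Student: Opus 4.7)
The plan is first to identify $\mathcal{F}_i$ with the bottom term $\mathcal{G}^1$ of an appropriate coprimary filtration, and then to establish $\mathcal{F} = \bigoplus_i \mathcal{F}_i$ by induction on $n$. The central observation is that since the closures $\overline{\{\mathfrak{p}_j\}}$ are pairwise disjoint, no two points of $\Ass(\mathcal{F})$ are comparable in the partial order of Example \ref{Eg: key}; consequently, every $\mathfrak{p}_j$ has rank $0$, and any linear ordering is an admissible linear extension of the partial order.

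For the identification step I would fix the linear ordering $\mathfrak{p}_i = \mathfrak{q}_1 > \mathfrak{q}_2 > \cdots > \mathfrak{q}_n$ that places $\mathfrak{p}_i$ at the top. By Theorem \ref{thm: nt filtration} applied to $\mathcal{F}$, one has $\mathcal{G}^{n-1} = \ker(\mathcal{F} \to t_{\mathfrak{q}_n*} t_{\mathfrak{q}_n}^* \mathcal{F})$. I would then apply the same theorem to $\mathcal{G}^{n-1}$ and use Proposition \ref{Prop:intersection} (with $\mathfrak{p} = \mathfrak{q}_n$, $\mathfrak{q} = \mathfrak{q}_{n-1}$) to conclude $\mathcal{G}^{n-2} = \mathcal{G}^{n-1} \cap \ker(\mathcal{F} \to t_{\mathfrak{q}_{n-1}*} t_{\mathfrak{q}_{n-1}}^* \mathcal{F})$, iterating this procedure --- at each stage applying Proposition \ref{Prop:intersection} inside the already-constructed sheaf to rewrite the new kernel as an intersection with $K_{\mathfrak{q}_l} := \ker(\mathcal{F} \to t_{\mathfrak{q}_l*} t_{\mathfrak{q}_l}^* \mathcal{F})$ --- arriving at
\[\mathcal{G}^1 \;=\; \bigcap_{k=2}^n K_{\mathfrak{q}_k} \;=\; \bigcap_{j \neq i} \ker(\mathcal{F} \to t_{j*} t_j^* \mathcal{F}) \;=\; \mathcal{F}_i.\]

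For the decomposition, I would induct on $n$. The base $n=1$ is trivial. For the step, set $\mathcal{G} = \ker(\mathcal{F} \to t_{n*} t_n^* \mathcal{F})$; Lemma \ref{Lemma:key} gives $\Ass(\mathcal{G}) = \{\mathfrak{p}_1, \ldots, \mathfrak{p}_{n-1}\}$, whose closures remain pairwise disjoint, so the inductive hypothesis applies to $\mathcal{G}$ and yields $\mathcal{G} = \bigoplus_{i=1}^{n-1} \mathcal{G}_i$ with $\mathcal{G}_i = \bigcap_{j \neq i,\, j < n} \ker(\mathcal{G} \to t_{j*} t_j^* \mathcal{G})$. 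Proposition \ref{Prop:intersection} rewrites each inner kernel as $\mathcal{G} \cap \ker(\mathcal{F} \to t_{j*} t_j^* \mathcal{F})$, so $\mathcal{G}_i = \mathcal{F}_i$ for $i < n$. It remains to show $\mathcal{F} = \mathcal{G} \oplus \mathcal{F}_n$. The intersection $\mathcal{G} \cap \mathcal{F}_n$ vanishes because $\Supp(\mathcal{G}) \subseteq \bigcup_{j<n} \overline{\{\mathfrak{p}_j\}}$ and $\Supp(\mathcal{F}_n) \subseteq \overline{\{\mathfrak{p}_n\}}$ are disjoint. Dually, $\Supp(\mathcal{F}/\mathcal{G}) \subseteq \overline{\{\mathfrak{p}_n\}}$ (by Lemma \ref{Lemma:key}, $\Ass(\mathcal{F}/\mathcal{G}) = \{\mathfrak{p}_n\}$) and $\Supp(\mathcal{F}/\mathcal{F}_n) \subseteq \bigcup_{j<n}\overline{\{\mathfrak{p}_j\}}$ (from the coprimary filtration structure with $\mathcal{F}_n = \mathcal{G}^1$ at the bottom), so $\mathcal{F}/(\mathcal{G}+\mathcal{F}_n) = 0$ and $\mathcal{F} = \mathcal{G} + \mathcal{F}_n$.

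The hard part will be the iteration in the identification step: one must apply Proposition \ref{Prop:intersection} alternately inside $\mathcal{F}$ and inside the already-built $\mathcal{G}^k$, and verify at each stage that the two points involved remain rank-$0$ associated points of the ambient sheaf. This verification is immediate in our setting, since a subset of a trivially ordered set is again trivially ordered. Once this is in place, the direct-sum assertion reduces to routine support bookkeeping via Lemma \ref{Lemma:key} and the coprimary filtration structure.
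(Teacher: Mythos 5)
Your proposal is correct, but it is organized differently from the paper's argument, and the comparison is worth recording. The paper works with the single given filtration and proves two strengthened claims by induction along its index: first $\mathcal G^k=\bigcap_{j>k}\ker(\mathcal F\to t_{j*}t_j^*\mathcal F)$ for every $k$, and then $\mathcal G^k=\sum_{j\leqslant k}\mathcal F_j$ for every $k$; both inductions lean on Corollary \ref{cor: filtration} (exchanging two adjacent coprimary subquotients) to bring the relevant prime into position, combined with Proposition \ref{Prop:intersection} for the intersections and Proposition \ref{Prop:sum} for the sums, and directness is then read off from $\Ass$ of the intersection. You avoid Corollary \ref{cor: filtration} entirely: for the identification you exploit that the underlying order of the given filtration has $\mathfrak p_i$ maximal (forced by $\Ass(\mathcal G^1)=\{\mathfrak p_i\}$) and unwind the recursion $\mathcal G^{k}=\ker(\mathcal G^{k+1}\to t_{*}t^{*}\mathcal G^{k+1})$ from Theorem \ref{Thm: coprimary filtration}, converting each kernel into an intersection with the global kernels $K_{\mathfrak q_l}$ by repeated use of Proposition \ref{Prop:intersection} inside the intermediate sheaves (the bookkeeping claim $\ker(\mathcal G^k\to t_{\mathfrak q_l*}t_{\mathfrak q_l}^*\mathcal G^k)=\mathcal G^k\cap K_{\mathfrak q_l}$, proved by downward induction on $k$, with the rank-$0$ and $\Ass(\mathcal G^{k+1})=\{\mathfrak q_1,\dots,\mathfrak q_{k+1}\}$ hypotheses available because the order is trivial); and for the decomposition you induct on $n=|\Ass(\mathcal F)|$, splitting off $\mathcal F_n$ from $\mathcal G=\ker(\mathcal F\to t_{n*}t_n^*\mathcal F)$, with $\mathcal G\cap\mathcal F_n=0$ and $\mathcal F=\mathcal G+\mathcal F_n$ obtained from disjointness of supports (here you cannot quote Proposition \ref{Prop:sum} verbatim, since $\mathcal F_n$ is an intersection of kernels rather than a single kernel, but your support argument via $\Ass(\mathcal F/\mathcal G)=\{\mathfrak p_n\}$ from Lemma \ref{Lemma:key} and $\Ass(\mathcal F/\mathcal F_n)\subseteq\{\mathfrak p_1,\dots,\mathfrak p_{n-1}\}$ from the filtration is exactly the proof of that proposition adapted to this situation). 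The trade-off: the paper's route yields the finer level-by-level formulas $\mathcal G^k=\bigcap_{j>k}K_{\mathfrak p_j}=\sum_{j\leqslant k}\mathcal F_j$ for the whole filtration, while yours is leaner, dispensing with the exchange corollary and Proposition \ref{Prop:sum} at the cost of the iterated-intersection bookkeeping, which you correctly identify as the only delicate point and which does go through as you indicate.
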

\begin{proof}
Without loss of generality, we may assume that $\Ass(N_{i+1}/N_i)=\{\mathfrak{p}_{i+1}\}$, where $i\in \{0,\cdots,n-1\}$. We claim that for any $i\in \{0,\cdots,n-1\}$,
\[\mathcal G^i=\bigcap_{j>i} \ker(\mathcal F\rightarrow t_{j*}t_j^*\mathcal F).\]
 By Theorem \ref{Thm: coprimary filtration}, the claim is true when $i=n-1$. We prove the claim by induction.
 We assume that the claim is true when $i=k+1$. Let $i=k$.
 Since $ \mathfrak{p}_{k+1}\notin \overline{\{\mathfrak{p}_{k+2}\}}$, by Corollary \ref{cor: filtration}, there exists a coprimary filtration of $\mathcal F$,
 \[0=\mathcal G^0\subseteq \mathcal G^1\subseteq \cdots \subseteq \mathcal G^{k}\subseteq \mathcal G'^{k+1}\subseteq \mathcal G^{k+2}\subseteq \cdots\subseteq \mathcal G^n=\mathcal F,\]
 such that both $\Ass(\mathcal G^{k+2}/\mathcal G'^{k+1})=\{\mathfrak{p}_{k+1}\}$ and $\Ass(\mathcal G'^{k+1}/\mathcal G^{k})=\{\mathfrak{p}_{k+2}\}$ hold, where
$\mathcal G'^{k+1}=\ker(\mathcal G^{k+2}\rightarrow t_{k+1 *}t^*_{k+1}\mathcal G^{k+2})$. By Proposition \ref{Prop:intersection},
$\mathcal G^k=\mathcal G^{k+1}\cap \mathcal G'^{k+1}$. By induction hypothesis, we have
\[\mathcal G^{k+1}=\bigcap_{j>k+1} \ker(\mathcal F\rightarrow t_{j*}t_j^*\mathcal F)\]
and
\[\mathcal G'^{k+1}=(\bigcap_{j>k+2} \ker(\mathcal F\rightarrow t_{j*}t_j^*\mathcal F))\cap \ker(\mathcal F\rightarrow t_{k+1 *}t_{k+1}^*\mathcal F).\]
Therefore,
\[\mathcal G^k=\mathcal G^{k+1}\cap \mathcal G'^{k+1}=\bigcap_{j>k} \ker(\mathcal F\rightarrow t_{j*}t_j^*\mathcal F).\]
In particular, if $i=1$,
\[\mathcal G^1=\bigcap_{j>1} \ker(\mathcal F\rightarrow t_{j*}t_j^*\mathcal F)=\mathcal F_1.\]
Now, we claim that for any $i\in \{1,\cdots,n\}$,
\[\mathcal G^i=\sum_{j\leqslant i} \mathcal F_j.\]
The claim is true when $i=1$. We prove the claim by induction.
We assume that the claim is true when $i=k$. Let $i=k+1$. Since $ \mathfrak{p}_{k}\notin \overline{\{\mathfrak{p}_{k+1}\}}$, by Corollary \ref{cor: filtration}, there exists a coprimary filtration of $\mathcal F$,
 \[0=\mathcal G^0\subseteq \mathcal G^1\subseteq \cdots \subseteq \mathcal G^{k-1}\subseteq \mathcal G'^{k}\subseteq \mathcal G^{k+1}\subseteq \cdots\subseteq \mathcal G^n=\mathcal F,\]
 such that both $\Ass(\mathcal G^{k+1}/\mathcal G'^{k})=\{\mathfrak{p}_{k}\}$ and $\Ass(\mathcal G'^{k}/\mathcal G^{k-1})=\{\mathfrak{p}_{k+1}\}$ hold, where
$\mathcal G'^{k}=\ker(\mathcal G^{k+1}\rightarrow t_{k*}t_k^* \mathcal G^{k+1})$. Since $\overline{\{\mathfrak{p}_k\}}\cap \overline{\{\mathfrak{p}_{k+1}\}}=\emptyset$, by Proposition \ref{Prop:sum},
$\mathcal G^{k+1}=\mathcal G^k+\mathcal G'^{k}$. By induction hypothesis, both
$\mathcal G^{k}=\sum\limits_{j\leqslant k} \mathcal F_j$
and
$\mathcal G'^{k}=\sum\limits_{j\leqslant k-1}\mathcal F_j+\mathcal F_{k+1}$ hold.
Therefore,
\[\mathcal G^{k+1}=\mathcal G^{k}+ \mathcal G'^{k}=\sum_{j\leqslant k+1} \mathcal F_j.\]
In particular, if $i=n$,
\[\mathcal F=\sum_{1\leqslant j\leqslant n} \mathcal F_j.\]
To show such a sum is a direct sum, without loss of generality, we only need to show
\[\mathcal H:=\mathcal F_n\cap (\sum_{1\leqslant i\leqslant n-1} \mathcal F_i)=0.\]
Note that $\mathcal H$ is a coherent sheaf on $X$, $\Ass(\mathcal H)\subseteq \Ass(\mathcal F_n)=\{p_n\}$ by Theorem \ref{Thm: coprimary filtration}.
Moreover,
\[\Ass(\mathcal H)\subseteq \Ass(\sum_{1\leqslant i\leqslant n-1} \mathcal F_i)=\Ass(\mathcal G^{n-1})=\{\mathfrak{p}_1,\cdots,\mathfrak{p}_{n-1}\}.\]
Thus, $\Ass(\mathcal H)=\emptyset$. Therefore, $\mathcal H=0$.

\end{proof}

\begin{rmk}
In Theorem \ref{thm:direct sum}, if $X=\Spec(A)$ and $\Ass(M)$ consists of maximal ideals of $A$, then $\overline{\{\mathfrak{p}_i\}}\bigcap \overline{\{\mathfrak{p}_j\}}=\emptyset$ holds, where $i,j\in \{1,\cdots, n\}$ and $i\neq j$. Therefore, Kirby's result \cite[Theorem 3]{K1973} is a corollary of Theorem \ref{thm:direct sum}.
\end{rmk}

\bibliographystyle{plain}
\bibliography{refer}

\end{document}